% Version 05.10.2025

\documentclass[reqno,11pt]{amsart}
\usepackage[utf8]{inputenc}
\usepackage[margin=1in]{geometry}
\usepackage{amscd,amsfonts,amsmath,amssymb,amsthm}
\usepackage{bbm,dsfont,centernot,mathtools}
\usepackage{color,mathrsfs,stackrel}
\usepackage[colorlinks=true,linkcolor=blue,citecolor=red]{hyperref}
\usepackage{enumitem} 
\usepackage{hyperref} 

\newtheorem{theorem}{Theorem}[section]
\newtheorem{lemma}[theorem]{Lemma}
\newtheorem{proposition}[theorem]{Proposition}
\newtheorem{corollary}[theorem]{Corollary}

\theoremstyle{definition}
\newtheorem{definition}[theorem]{Definition}
\newtheorem{remark}[theorem]{Remark}
\newtheorem{example}[theorem]{Example}

\numberwithin{equation}{section}

\allowdisplaybreaks

%--------------------------
% MACROS
%--------------------------

\def\supp {\operatorname{supp}}
\def\dist {\operatorname{dist}}
\def\div {\operatorname{div}}
\def\de {\mathrm{d}}
\def\R {\mathbb{R}}
\def\N {\mathbb{N}}

%--------------------------
% Title
%--------------------------

\title[Riesz fractional gradient functionals defined on partitions]{Riesz fractional gradient functionals defined on partitions: Nonlocal-to-local variational limits}
% Nonlocal-to-local variational limits of Riesz fractional gradient functionals defined on partitions}

%--------------------------
% Document
%--------------------------

\begin{document}

\author[S.\ Almi]{Stefano Almi}
\address[Stefano Almi]{Department of Mathematics and Applications ``R.~Caccioppoli'', University of Naples Federico II, Via Cintia, Monte S.~Angelo, I-80126 Napoli, Italy}
\email{stefano.almi@unina.it}

\author[M.\ Caponi]{Maicol Caponi}
\address[Maicol Caponi]{Department of Information Engineering, Computer Science and Mathematics, University of L'Aquila, Via Vetoio~1, Coppito, I-67100 L'Aquila, Italy}
\email{maicol.caponi@univaq.it}

\author[M. Friedrich]{Manuel Friedrich} 
\address[Manuel Friedrich]{Department of Mathematics, Johannes Kepler Universit\"at Linz. Altenbergerstrasse 66, 4040 Linz, Austria}
\email{manuel.friedrich@jku.at}

\author[F.\ Solombrino]{Francesco Solombrino}
\address[Francesco Solombrino]{Department of Biological and Environmental Sciences and Technologies, University of Salento, Via Lecce-Monteroni, I-73047 Lecce, Italy}
\email{francesco.solombrino@unisalento.it}

\date{}

\makeatletter
\@namedef{subjclassname@2020}{
\textup{2020} Mathematics Subject Classification}
\makeatother

\subjclass[2020]{
49J45, %Methods involving semicontinuity and convergence; relaxation}
26A33, %Fractional derivatives and integrals
35B27, %Homogenization in context of PDEs; PDEs in media with periodic structure
35R11, %Fractional partial differential equations
26B30, %Absolutely continuous real functions of several variables, functions of bounded variation, 
49Q20. %Variational problems in a geometric measure-theoretic setting
}
\keywords{fractional calculus, fractional gradient, fractional variation, $\Gamma$-convergence, Caccioppoli partitions, homogenization}

%--------------------------
% Abstract
%--------------------------

\begin{abstract}
This paper addresses the asymptotics of functionals with linear growth depending on the Riesz $s$-fractional gradient on piecewise constant functions. We consider a general class of varying energy densities and, as $s\to 1$, we characterize their local limiting functionals in the sense of $\Gamma$-convergence. 
\end{abstract}

\maketitle

%--------------------------
% Introduction
%--------------------------

\section{Introduction}

Nonlocal functionals represent a powerful alternative to local models, providing a framework to capture long-distance interactions without relying on the existence of gradients. Such a distinctive attribute has proven particularly valuable in multiple applications, where classical local models may fall short. Among others, we mention image processing for advanced edge detection and denoising~\cite{Antil-Bartels, Bessas-Stefani, Gilboa-Osher, Iglesias-Mercier}, data science and machine learning for elucidating data point relationships~\cite{Antiletal, Antiletal2, Bungert-Stinson, Holler-Kunisch}, and mechanical modeling for accurately describing long-range effects in elasticity~\cite{ACFS25, Bellido-MoraCorral-Pedregal, Friedrich-Seitz-Stefanelli, Mengesha-Du, Silhavy20, Silling1, Silling2, Silling3}.

A first approach to nonlocal problems was given in terms of fractional Sobolev spaces~$W^{s, p} (\R^{n})$ for $s \in (0, 1)$ and $p \in [1, +\infty)$, characterized in terms of the Gagliardo seminorm~$[u]_{s, p}$, see~\cite{DiNezza} for an overview. In the case $p=1$, a first notion of fractional perimeter has been introduced in~\cite{Caffarelli} in terms of the Gagliardo seminorm of indicator functions~$[\mathbf{1}_{E}]_{s, 1}$, 
extending the traditional theory of Caccioppoli perimeter~\cite{AFP00}. This led to the development of an extensive theory of, e.g., nonlocal minimal surfaces~\cite{Caffarelli, Cesaronietal, Figalli-Valdinoci}, fractional mean curvature flows~\cite{Cesaroni-DeLuca-Novaga-Ponsiglione, Cesaronietal2, Chambolle-Novaga-Ruffini}, and nonlocal free discontinuity problems~\cite{Carionietal-2025, Cesaroni-Novaga-isoperimeter, Cesaroni-Novaga-clusters, Figalli-Fusco-Maggi-Millot-Morini, Frank-Seiringer}. The asymptotic analysis of the Gagliardo perimeter for $s \to 1$ or $s \to 0$ has been developed in~\cite{Ambrosio-Martinazzi, Fanizza, Ludwig}. 

As noticed in~\cite{ShSp15}, fractional Sobolev spaces do not relate directly to any notion of fractional gradient, and may thus be inadequate for the treatment of space-dependent energy densities. As a workaround, in recent applications to elasticity and image processing~\cite{Antil-Diaz-Jing-Schikorra, Bellido-Cueto-MoraCorral3}, nonlocality has instead been expressed in terms of the Riesz fractional gradient~$\nabla^{s} u$ (see Definition~\ref{def:Riesz-grad} below), studied in~\cite{Horvath, ShSp15, ShSp18}. The Riesz fractional gradient represents a rather natural nonlocal replacement of a gradient as, among others, it possesses good representation formulas in terms of Riesz potentials, as well as useful integration by parts formulas, expressed through a fractional divergence operator ${\rm div}^{s}$ (cf.~Definition~\ref{def:Riesz-grad}). The Riesz fractional gradient has been thoroughly studied in variational problems, with focus on lower semicontinuity, relaxation, and $\Gamma$-convergence~\cite{Bellido-Cueto-MoraCorral4, Bellido-Cueto-MoraCorral2, Cueto-Kreisbeck-Schonberger, KS22} under superlinear growth. We further refer to~\cite{Bellido-Cueto-MoraCorral, CS23-1, Cueto-Kreisbeck-Schonberger2} for the nonlocal-to-local analysis as $s \to 1$.

When coming to the case $p=1$, it has been pointed out in~\cite{CS19, CS23-1} that Gagliardo spaces are not suitable for the definition of a nonlocal version of functions with bounded variation. Such an issue has been addressed in~\cite{BCCS22, CS19, CS23-1}, where the space $BV^{s} (\Omega)$ of functions with $s$-fractional bounded variation in an open set $\Omega \subseteq \R^{n}$ has been introduced by exploiting the fractional divergence operator. This led to the definition of a weak $s$-fractional gradient $D^{s} u \in \mathcal{M}_{b} (\Omega;\R^n)$ for $u \in BV^{s} (\Omega)$. Lower semicontinuity and relaxation issues for functionals with linear growth defined on~$BV^{s} (\R^{n})$ have been discussed in~\cite{Schonberger}. The associated notion of finite fractional $s$-perimeter has been studied extensively in~\cite{CS23-1}. In particular, the authors show that in the nonlocal-to-local limit $s \to 1$ this notion $\Gamma$-converges to the classical Caccioppoli perimeter. 

The goal of the present paper is to initiate a systematic study of $\Gamma$-convergence in fractional spaces of bounded variations in the limit as $s \to 1$, identifying for which scaling of~$s$ a local model is recovered. Inspired by the seminal papers~\cite{AB90-1, AB90-2}, we focus here on the sequence of functionals
\begin{align}
\label{e:intro-Fk}
\mathcal F_k(u) = \int_\Omega \psi_k\left(y,\frac{\de D^{s_k}u}{\de |D^{s_k}u|}(y)\right)\de |D^{s_k}u|(y)
\end{align}
defined on piecewise constant functions $u \colon \R^n \to T$ with finite $s_{k}$-fractional variation, where $T \subset \R$ denotes a finite set. As in~\cite{AB90-1, AB90-2}, our motivation comes from the fact that many problems in physics can be characterized by the formation of partitions which can be described variationally by means of local or nonlocal interfacial energies, e.g., different orientations in liquid crystals or different fluid densities in the theory of Cahn-Hilliard. Compared to the classical setting of~\cite{AB90-1, AB90-2}, the advantage of dealing with fractional gradients is already apparent in the formulation of the problem: fractional gradients are well-defined on low regularity functions and no Geometric Measure Theory tools are required for the definition of~\eqref{e:intro-Fk}. Under suitable constitutive assumptions on the densities~$\psi_{k}$ and the parameter~$s_{k}$, our main result shows that the $\Gamma$-limit of~$\mathcal{F}_{k}$ is a local functional coinciding with the one identified in~\cite{AB90-2} for Caccioppoli partitions, see Theorem~\ref{thm:main}. 

The class of admissible densities $\psi_{k}$ is given by continuous functions in~$\R^{n}\times \R^{n}$, convex and positively 1-homogeneous in the second variable, and uniformly bounded from above and below by a norm in~$\R^{n}$. Moreover, we require a {\em uniform approximation condition}~\eqref{A} which allows to uniformly approximate~$\psi_{k}(x, \xi)$ in terms of finite sums of functions of the form $b_{k}^{i} (x) \varphi_{k}^{i} (\xi)$, $i=1,\ldots,N$, with $b_{k}^{i}\geq 0$ uniformly continuous, and $\varphi_{k}^{i}$ convex and positively 1-homogeneous. 
%, and satisfying $\sup_{k}(\| b^{i}_{k}\|_{L^{\infty} (\R^{n})} + \| \varphi_{k}^{i}\|_{C(\mathbb{S}^{n-1})}) <+\infty$. 
The precise definition of the admissible class can be found in Section~\ref{sec:main}. Here, we only remark that our framework covers particularly the setting of almost periodic functions considered in~\cite{AB90-2}. The parameter~$s_{k}$ converges to~$1$ under the additional compatibility condition~\eqref{eq:compatibility} involving the radius of uniform continuity~$r_{b_{k}^{i}}$ of the maps~$b_{k}^{i}$, as defined in~\eqref{eq:rb}. As we discuss in Remark~\ref{rem:compatibility}, the convergence rate in~\eqref{eq:compatibility} simplifies in the homogenization setting $\psi_{k} (x, \xi) = \psi (\frac{x}{\varepsilon_{k}}, \xi)$, while similar rates already appeared in the mathematical literature concerning fractional gradients~\cite{ACFS25}. 

 The main idea behind the $\Gamma$-limif inequality is based on the relation between the Riesz fractional gradient $\nabla^{s} u$ and the gradient of the Riesz potential $v= \mathcal{I}^{1-s} u$, namely $\nabla v = \nabla^{s} u$ for $u \in C^{1}_{c} (\R^{n})$, see Proposition~\ref{prop:Riesz} below. In Proposition~\ref{prop:rep} we show that such a property indeed extends, in a suitable sense, to $u \in BV^{s} (\Omega) \cap L^{\infty}(\Omega)$, as we can find $w \in BV(\Omega)$ with~$D w = D^{s} u$. This allows to reformulate~$\mathcal{F}_{k}$ in terms of $BV$-functions. The use of an anisotropic Coarea Formula in Proposition~\ref{prop:app-BVT} enables us to recover piecewise constant $BV$-competitors without substantially changing the energy in the limit $k \to \infty$. At this state, to conclude the lower bound, we can use the $\Gamma$-liminf inequality in~\cite{AB90-2}, or more precisely the one in~\cite{FS20} covering a more general class of densities.

 We emphasize that for the lower bound the uniform approximation condition~\eqref{A} is not needed, but it is essential for the $\Gamma$-limsup inequality. For the latter, a strategy could be to apply Proposition~\ref{prop:rep} the reverse way, building functions in $BV^s$ from the limiting $BV$-function. Yet, the construction of piecewise constant competitors would require a Coarea Formula in $BV^{s} (\Omega)$, a tool which is not available unfortunately, see~\cite[Theorem~3.11 and Corollary~5.6]{CS19}. Hence, we follow another path which consists in showing that recovery sequences for the classical variation~\cite{AB90-2, FS20} are also compatible with the fractional variation. For the approximating densities as in~\eqref{A}, this is achieved by the combination of convexity, Riesz potential, and a duality approach, moving the Riesz operator $\mathcal{I}^{1-s_{k}}$ to the weight functions~$b^{i}_{k}$ appearing in~\eqref{A}. When passing to the limit, the scaling~\eqref{eq:compatibility} between $s_{k}$ and the oscillation parameter~$r_{b_{k}^{i}}$ is necessary to control the error terms quantified, loosely speaking, by $\| \mathcal{I}^{1-s_{k}} b^{i}_{k} - b^{i}_{k}\|_{L^{\infty}}$, see Lemma~\ref{lem:a-app}.

To the best of our knowledge, the present paper is a first step towards a comprehensive theory of $\Gamma$-convergence for fractional energies involving linear growth and nonlocal-to-local effects. An interesting generalization consists in developing the stochastic counterpart of the variational analysis presented in this work. Future research will also be devoted to the extension of our $\Gamma$-convergence result to the case of functionals defined on the whole~$BV^{s}(\Omega)$, thus moving towards the $BV$-theory of~\cite{Cagnettietal}. In this context, let us highlight that our use of Caccioppoli partitions simplifies the formulation of the limit energy drastically, as it only weights the interfaces of the partition and also introduces an a priori $L^{\infty}$-bound on the functional space. The latter is crucially exploited in our arguments, both for compactness (cf.~Theorem~~\ref{thm:main}(1) and Proposition~\ref{prop:comp}) and for cut-off constructions (see Proposition~\ref{prop:app}). Indeed, the $s$-fractional variation of~$u \in BV^{s} (\Omega)$ is compared with the $s$-fractional variation of $\eta u$ over~$\R^{n}$, after the multiplication by a cut-off function~$\eta$. The error produced by this operation is due to a fractional version of Leibniz' rule, and is controlled in terms of the $L^{\infty}$-norm of~$u$.

The paper is organized as follows. Section~\ref{sec:2} is devoted to some preliminaries, in particular the Riesz fractional \texorpdfstring{$s$}{s}-gradient and \texorpdfstring{$s$}{s}-variation. In Section ~\ref{sec:main} we present our setting and formulate the main result. Then, Section~\ref{sec:comp} addresses the proof of compactness, and eventually Section~\ref{sec:Gamma} contains the proof of the $\Gamma$-convergence result. Some auxiliary results are collected in Appendix~\ref{app:a}.

%--------------------------
% Preliminaries
%--------------------------

\section{Preliminaries}\label{sec:2}

%--------------------------
% Notation
%--------------------------

\subsection{Notation}

For all $x\in\R^n$ and $r>0$, let $B_r(x)\subset\R^n$ denote the open ball of radius $r$ centered at $x$. When $x=0$, we simply write $B_r$, and we set $\mathbb S^{n-1}\coloneq\partial B_1$. For all $x\in\R^n$ and $r>0$, let $Q_r(x)\subset\R^n$ denote an open cube centered at $x$ with sides parallel to the coordinate axes. For all $\nu\in\mathbb S^{n-1}$ we fix an orthogonal matrix $R^\nu\in\R^{n\times n}$ satisfying $R^\nu{\rm e}_n=\nu$, and for all $x\in\R^n$ and $r>0$ we define $Q_r^\nu(x)\coloneq R^\nu Q_r(x)$. When $x=0$, we simply write $Q_r^\nu$. 

Let $\mathcal L^n$ denote the $n$-dimensional Lebesgue measure and $\mathcal H^{n-1}$ the $(n-1)$-dimensional Hausdorff measure. We set $\omega_n\coloneq\mathcal H^{n-1}(\mathbb S^{n-1})$. Given a measurable set $E\subseteq\R^n$, let $\mathbf{1}_E\colon\R^n\to\{0,1\}$ denote its characteristic function. Given two open set $A,B\subset\R^n$, we write $A\subset\subset B$ if there exists a compact set $K$ such that $A\subset K\subset B$. Given two sets $A,B\subseteq\R^n$, we denote their symmetric difference by
$A\triangle B\coloneq (A\setminus B)\cup (B\setminus A)$. 

We adopt standard notation for Lebesgue spaces on measurable subsets $E\subseteq\R^n$. According to the context, we use $\|\cdot\|_{L^p(E)}$ to denote the norm in $L^p(E)$ for all $1\le p\le\infty$. Given an open set $\Omega\subseteq\R^n$ and $k\in\N$, we denote by $\mathcal M(\Omega;\R^k)$ the space of $\R^k$-valued Radon measures on $\Omega$, and by $\mathcal M_b(\Omega;\R^k)$ the subspace of finite Radon measures. Given $\mu\in \mathcal M(\Omega;\R^k)$, we denote by $|\mu|\in\mathcal M(\Omega)$ its total variation, and by $\frac{\de\mu}{\de|\mu|}\in L^1_{\rm loc}(\Omega,|\mu|;\R^k)$ the Radon-Nikodym derivative of $\mu$ with respect to its total variation $|\mu|$.

Given an open set $\Omega\subseteq\R^n$ and a function $u\in L^1_{\rm loc}(\Omega)$, let $Du$ denote the distributional derivative of $u$ in $\Omega$. The space of functions of bounded variation in $\Omega$ is defined as $BV(\Omega)\coloneq\{u\in L^1(\Omega):Du\in\mathcal M_b(\Omega;\R^n)\}$,
and we set $BV_{\rm loc}(\Omega)\coloneq\{u\in L^1_{\rm loc}(\Omega):Du\in \mathcal M(\Omega;\R^n)\}$. 

Let $b\colon\R^n\to\R$ be a uniformly continuous function. We define its \emph{radius of uniform continuity} $r_b\colon(0,\infty)\to(0,1]$ for all $\eta>0$ as
\begin{equation}\label{eq:rb}
r_b(\eta)\coloneq\sup\left\{\delta\in(0,1]:|b(x)-b(y)|\le \eta\text{ for all $x,y\in\R^n$ with $|x-y|\le \delta$}\right\}.
\end{equation}

%--------------------------
% Riesz gradient
%--------------------------

\subsection{Riesz fractional \texorpdfstring{$s$}{s}-gradient and \texorpdfstring{$s$}{s}-divergence}

We start by recalling the notion of \emph{Riesz $s$-fractional gradient} and \emph{Riesz $s$-fractional divergence}, for $s\in (0,1)$, firstly introduced in~\cite{CS19,ShSp15,Silhavy20}. Let $s\in(0,1)$ be fixed. We define
\begin{align}\label{muu}
\mu_s\coloneq\frac{2^s\Gamma(\frac{n+s+1}{2})}{\pi^\frac{n}{2}\Gamma(\frac{1-s}{2})}\in(0,\infty),
\end{align}
where $\Gamma$ denotes the Gamma function. Notice that
\begin{equation}\label{eq:mus}
\lim_{s\to 1^-}\frac{\mu_s}{1-s}=\frac{n}{\omega_n}=\frac{1}{\mathcal L^n(B_1)}.
\end{equation}

\begin{definition}\label{def:Riesz-grad}
For all $\psi\in C^1_c(\R^n)$ we define the \emph{Riesz $s$-fractional gradient} $\nabla^s\psi\colon\R^n\to\R^n$ as
\begin{equation}\label{eq:nablas}
\nabla^s\psi(x)\coloneq\mu_s\int_{\R^n}\frac{(\psi(y)-\psi(x))(y-x)}{|y-x|^{n+s+1}}\,\de y\quad\text{for all $x\in\R^n$}.
\end{equation}
For all $\Psi\in C^1_c(\R^n;\R^n)$ we define the \emph{Riesz $s$-fractional divergence} $\div^s\Psi\colon\R^n\to\R$ as 
\begin{equation*} 
\div^s\Psi(x)\coloneq\mu_s\int_{\R^n}\frac{(\Psi(y)-\Psi(x))\cdot(y-x)}{|y-x|^{n+s+1}}\,\de y\quad\text{for all $x\in\R^n$}.
\end{equation*} 
\end{definition}

As observed in~\cite[Section~2]{CS19}, the nonlocal operators $\nabla^s\psi$ and $\div^s\Psi$ are well-defined, in the sense that the above integrals converge for all $x\in\R^n$. Moreover, these operators satisfy the following $L^p$-type estimates.

\begin{proposition}\label{prop:gl-est1}
Let $p\in [1,\infty]$. For all $\psi\in C^1_c(\R^n)$ and $\Psi\in C_c^1(\R^n;\R^n)$ we have 
\begin{align}
\|\nabla^s\psi\|_{L^p(\R^n)}&\le \frac{2\omega_n\mu_s}{s(1-s)2^s}\|\psi\|_{L^p(\R^n)}^{1-s}\|\nabla\psi\|_{L^p(\R^n)}^s,\label{eq:est-1}\\
\|\div^s\Psi\|_{L^p(\R^n)}&\le \frac{2\omega_n\mu_s}{s(1-s)2^s}\|\Psi\|_{L^p(\R^n)}^{1-s}\|\nabla\Psi\|_{L^p(\R^n)}^s.\label{eq:est-2}
\end{align}
\end{proposition}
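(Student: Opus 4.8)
The plan is to prove the gradient estimate~\eqref{eq:est-1}; the divergence estimate~\eqref{eq:est-2} follows by exactly the same argument applied componentwise. Fix $\psi\in C^1_c(\R^n)$ and $p\in[1,\infty]$. The idea is to split the defining integral in~\eqref{eq:nablas} at a radius $\delta>0$ to be optimized: write $\nabla^s\psi(x)=\mu_s(I_\delta(x)+J_\delta(x))$, where $I_\delta(x)$ is the integral over $\{|y-x|\le\delta\}$ and $J_\delta(x)$ the integral over $\{|y-x|>\delta\}$. On the inner region, I would bound $|\psi(y)-\psi(x)|\le\|\nabla\psi\|_{L^\infty}|y-x|$ only after taking $L^p$ norms via a Minkowski-type argument; more precisely, substituting $z=y-x$,
\begin{equation*}
I_\delta(x)=\int_{|z|\le\delta}\frac{(\psi(x+z)-\psi(x))z}{|z|^{n+s+1}}\,\de z,
\end{equation*}
and for a.e.\ fixed $z$ one has $\|\psi(\cdot+z)-\psi(\cdot)\|_{L^p(\R^n)}\le |z|\,\|\nabla\psi\|_{L^p(\R^n)}$ (this is the standard estimate on the $L^p$-modulus of continuity, using $\psi(x+z)-\psi(x)=\int_0^1\nabla\psi(x+tz)\cdot z\,\de t$ and Minkowski's integral inequality). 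Applying the integral Minkowski inequality in the $z$-variable then gives
\begin{equation*}
\|I_\delta\|_{L^p(\R^n)}\le\|\nabla\psi\|_{L^p(\R^n)}\int_{|z|\le\delta}\frac{|z|}{|z|^{n+s}}\,\de z=\omega_n\|\nabla\psi\|_{L^p(\R^n)}\int_0^\delta \rho^{-s}\,\de\rho=\frac{\omega_n}{1-s}\delta^{1-s}\|\nabla\psi\|_{L^p(\R^n)}.
\end{equation*}

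For the outer region I would instead use the crude bound $|\psi(y)-\psi(x)|\le|\psi(y)|+|\psi(x)|$, so that after the same substitution,
\begin{equation*}
|J_\delta(x)|\le\int_{|z|>\delta}\frac{|\psi(x+z)|+|\psi(x)|}{|z|^{n+s}}\,\de z,
\end{equation*}
and again by Minkowski's integral inequality (and translation invariance of the $L^p$ norm),
\begin{equation*}
\|J_\delta\|_{L^p(\R^n)}\le 2\|\psi\|_{L^p(\R^n)}\int_{|z|>\delta}\frac{\de z}{|z|^{n+s}}=2\omega_n\|\psi\|_{L^p(\R^n)}\int_\delta^\infty\rho^{-1-s}\,\de\rho=\frac{2\omega_n}{s}\delta^{-s}\|\psi\|_{L^p(\R^n)}.
\end{equation*}
Combining the two bounds, $\|\nabla^s\psi\|_{L^p(\R^n)}\le\mu_s\omega_n\big(\tfrac{1}{1-s}\delta^{1-s}\|\nabla\psi\|_{L^p}+\tfrac{2}{s}\delta^{-s}\|\psi\|_{L^p}\big)$. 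To match the stated constant I would simply choose $\delta=\|\psi\|_{L^p(\R^n)}/\|\nabla\psi\|_{L^p(\R^n)}$ (assuming both norms are nonzero; the degenerate cases are handled separately or by a limiting argument), which balances the two factors to give $\delta^{1-s}\|\nabla\psi\|_{L^p}=\delta^{-s}\|\psi\|_{L^p}=\|\psi\|_{L^p}^{1-s}\|\nabla\psi\|_{L^p}^s$, whence
\begin{equation*}
\|\nabla^s\psi\|_{L^p(\R^n)}\le\mu_s\omega_n\Big(\frac{1}{1-s}+\frac{2}{s}\Big)\|\psi\|_{L^p(\R^n)}^{1-s}\|\nabla\psi\|_{L^p(\R^n)}^s\le\frac{2\omega_n\mu_s}{s(1-s)}\|\psi\|_{L^p(\R^n)}^{1-s}\|\nabla\psi\|_{L^p(\R^n)}^s,
\end{equation*}
using $\tfrac{1}{1-s}+\tfrac2s\le\tfrac{1}{s(1-s)}+\tfrac{2}{s(1-s)}\cdot\tfrac{1}{?}$— here I would double-check the numerology to recover the factor $2^{-s}$ in the denominator (it presumably comes from absorbing $2^s$ differently in $\mu_s$, or from a slightly sharper split of the constants), but the structure of the estimate is exactly this.

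The only genuinely delicate point is the application of Minkowski's integral inequality to pass the $L^p$ norm inside the $z$-integral and the rigorous justification that the formal manipulations are licit — the integrand is absolutely integrable for $\psi\in C^1_c$ (the inner part because $|z|^{1-n-s}$ is locally integrable, the outer because $\psi$ has compact support and $|z|^{-n-s}$ is integrable at infinity), so Fubini/Minkowski apply without issue; for $p=\infty$ one argues pointwise in the same way. The remaining work is purely the bookkeeping of constants to land on the exact prefactor $\tfrac{2\omega_n\mu_s}{s(1-s)2^s}$.
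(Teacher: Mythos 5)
Your decomposition is the same as the paper's: apply the integral form of Minkowski's inequality to pass the $L^p$-norm inside the $h$-integral in~\eqref{eq:nablas}, split at a radius $R$, bound $\|\psi(\cdot+h)-\psi(\cdot)\|_{L^p}$ by $|h|\,\|\nabla\psi\|_{L^p}$ on the inner ball and by $2\|\psi\|_{L^p}$ outside, and then optimize over $R$. You correctly arrive at
\[
\|\nabla^s\psi\|_{L^p(\R^n)}\le\frac{\omega_n\mu_s}{1-s}R^{1-s}\|\nabla\psi\|_{L^p(\R^n)}+\frac{2\omega_n\mu_s}{s}R^{-s}\|\psi\|_{L^p(\R^n)},
\]
which is exactly the intermediate estimate in the paper.

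The ``numerology'' you flag is a genuine loose end, and the fix is simply that you chose the wrong radius. Your choice $\delta=\|\psi\|_{L^p}/\|\nabla\psi\|_{L^p}$ equalizes the two factors $\delta^{1-s}\|\nabla\psi\|_{L^p}$ and $\delta^{-s}\|\psi\|_{L^p}$, but that is not where the right-hand side attains its minimum because the two terms carry different coefficients ($\frac{1}{1-s}$ and $\frac{2}{s}$). Differentiating in $R$ shows the minimizer is $R^*=2\|\psi\|_{L^p}/\|\nabla\psi\|_{L^p}$, and substituting $R^*$ gives
\[
\omega_n\mu_s\,2^{1-s}\left(\frac{1}{1-s}+\frac{1}{s}\right)\|\psi\|_{L^p}^{1-s}\|\nabla\psi\|_{L^p}^s=\frac{2\omega_n\mu_s}{s(1-s)2^s}\|\psi\|_{L^p}^{1-s}\|\nabla\psi\|_{L^p}^s,
\]
which is the stated constant with no slack to absorb. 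Your choice instead yields the prefactor $\omega_n\mu_s\frac{2-s}{s(1-s)}$, and since $2-s>2^{1-s}$ for $s\in(0,1)$, this is strictly larger than the claimed constant; no amount of crude overestimation will recover the $2^{-s}$. So the ``double-check the numerology'' step is not bookkeeping but the actual optimization, and once done the proof matches the paper's. One further small caveat: deducing~\eqref{eq:est-2} ``componentwise'' is a slight abuse, since $\div^s\Psi=\sum_j(\nabla^s\Psi_j)_j$ and summing the scalar estimates would introduce an extra factor of $n$; instead one should rerun the same split using the pointwise bound $|(\Psi(y)-\Psi(x))\cdot(y-x)|\le|\Psi(y)-\Psi(x)|\,|y-x|$, exactly as for $\nabla^s$.
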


The estimates for $p=\infty$ can be found in~\cite[Lemmas~2.2 and~2.3]{CS23-1}, while analogous estimates for $p\in [1,\infty)$, with slightly different constants, are obtained in~\cite[Propositions~3.2 and~3.3]{CS23-1}. For the reader’s convenience, we give a sketch of the proof in Appendix~\ref{app:a}.

As shown in~\cite[Lemma~2.5]{CS19} and~\cite[Section~6]{Silhavy20}, the nonlocal operators $\nabla^s$ and $\div^s$ satisfy the following integration by parts formula.

\begin{proposition}\label{prop:dual}
For all $\psi\in C^1_c(\R^n)$ and $\Psi\in C^1_c(\R^n;\R^n)$ it holds
\begin{equation}\label{eq:dual}
\int_{\R^n}\nabla^s\psi(y)\cdot\Psi(y)\,\de y=-\int_{\R^n}\psi(y)\div^s\Psi(y)\,\de y.
\end{equation}
\end{proposition}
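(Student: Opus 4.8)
The plan is to prove the integration by parts formula \eqref{eq:dual} by directly manipulating the double integrals defining $\nabla^s$ and $\div^s$, exploiting the symmetry in $x$ and $y$ of the singular kernel $|x-y|^{-(n+s+1)}$. First I would write out the left-hand side as a double integral: since both $\psi$ and $\Psi$ are $C^1$ with compact support, $\nabla^s\psi$ is continuous and $\Psi$ has compact support, so $\int_{\R^n}\nabla^s\psi(x)\cdot\Psi(x)\,\de x$ equals
\[
\mu_s\int_{\R^n}\int_{\R^n}\frac{(\psi(y)-\psi(x))(y-x)\cdot\Psi(x)}{|y-x|^{n+s+1}}\,\de y\,\de x.
\]
The main point is to justify applying Fubini's theorem here: the integrand has a singularity along the diagonal $y=x$ of order $|y-x|^{-(n+s-1+1)}\cdot|y-x| = |y-x|^{-(n+s-1)}$ once we use $|\psi(y)-\psi(x)|\le \|\nabla\psi\|_\infty|y-x|$ near the diagonal, which is integrable since $n+s-1<n$; away from the diagonal the decay in $y$ is $|y-x|^{-(n+s)}$ together with the compact support of $\Psi$ in $x$, so absolute integrability on $\R^n\times\R^n$ holds. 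This is exactly the kind of estimate already used to make sense of $\nabla^s\psi$ in \cite[Section~2]{CS19}.

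Once Fubini applies, I would swap the order of integration and then perform the change of variables $(x,y)\mapsto(y,x)$ in the resulting double integral. Since the kernel $|y-x|^{-(n+s+1)}$ is symmetric under this swap while the factor $(y-x)$ changes sign, the term becomes
\[
-\mu_s\int_{\R^n}\int_{\R^n}\frac{(\psi(y)-\psi(x))(y-x)\cdot\Psi(y)}{|y-x|^{n+s+1}}\,\de x\,\de y.
\]
Adding this to the original expression (i.e., averaging the two representations) gives a symmetrized form
\[
\frac12\mu_s\int_{\R^n}\int_{\R^n}\frac{(\psi(y)-\psi(x))(y-x)\cdot(\Psi(x)-\Psi(y))}{|y-x|^{n+s+1}}\,\de x\,\de y,
\]
and one recognizes that the same symmetrization applied to $-\int_{\R^n}\psi(x)\,\div^s\Psi(x)\,\de x$ produces the identical expression: indeed expanding $-\int\psi(x)\div^s\Psi(x)\,\de x = -\mu_s\iint \psi(x)(\Psi(y)-\Psi(x))\cdot(y-x)|y-x|^{-(n+s+1)}$, swapping variables, and averaging yields the very same symmetric double integral. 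Hence the two sides of \eqref{eq:dual} agree.

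The main obstacle is purely the Fubini justification — making the absolute integrability estimate clean near the diagonal and at infinity — since the rest is algebraic bookkeeping with the antisymmetry of $(y-x)$ and the symmetry of the kernel. A minor technical point is that in the symmetrized identity one must be careful that the cross terms $\psi(x)\Psi(x)$ and $\psi(y)\Psi(y)$ (which are odd in $(y-x)$ against the even kernel) integrate to zero, but this again follows from the same change of variables combined with the absolute integrability already established. Since both operators are defined by structurally identical kernels differing only in whether $\psi$ or $\Psi$ is differenced, this symmetrization argument treats both sides on equal footing and the identity drops out; this is the approach taken in \cite[Lemma~2.5]{CS19} and \cite[Section~6]{Silhavy20}, which I would follow.
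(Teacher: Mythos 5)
There is a genuine sign error that breaks the symmetrization argument as written, and a second gap that you gloss over even if the sign were fixed.

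\textbf{The sign error.} Under the change of variables $(x,y)\mapsto(y,x)$, the kernel $|y-x|^{-(n+s+1)}$ is symmetric, the factor $(y-x)$ flips sign, \emph{and} the factor $(\psi(y)-\psi(x))$ also flips sign. You account for the first flip but not the second. Since the two antisymmetric factors appear as a product, there is no overall sign change, and the swap gives
\[
\int_{\R^n}\nabla^s\psi\cdot\Psi\,\de x \;=\; +\,\mu_s\int_{\R^n}\int_{\R^n}\frac{(\psi(y)-\psi(x))(y-x)\cdot\Psi(y)}{|y-x|^{n+s+1}}\,\de x\,\de y ,
\]
not the expression with a minus sign. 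Consequently the averaged form carries $\Psi(x)+\Psi(y)$, not $\Psi(x)-\Psi(y)$, and your displayed symmetrized integrand is wrong.

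\textbf{The deeper gap.} Even with the correct symmetrization, the two sides do not coincide term by term. A direct expansion shows that the symmetrized left side minus the symmetrized right side equals
\[
\mu_s\int_{\R^n}\int_{\R^n}\frac{\big(\psi(y)\Psi(y)-\psi(x)\Psi(x)\big)\cdot(y-x)}{|y-x|^{n+s+1}}\,\de y\,\de x \;=\;\int_{\R^n}\nabla^s(\psi\Psi)(x)\,\de x .
\]
This integrand is \emph{symmetric} under $x\leftrightarrow y$ (two antisymmetric factors multiplied, against a symmetric kernel), so it does not vanish by the change of variables, contrary to your claim that the cross terms are ``odd in $(y-x)$ against the even kernel''. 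The vanishing of $\int_{\R^n}\nabla^s(\psi\Psi)$ is true, but it needs its own argument — e.g., truncating away the diagonal so the difference can be split and each piece killed by the oddness of $\int_{|y-x|>\varepsilon}(y-x)|y-x|^{-(n+s+1)}\,\de y=0$.

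The standard route — and the one the paper itself uses for the generalized version in Lemma~\ref{lem:weak-grad} — is exactly that truncation argument: restrict the double integral to $\{|y-x|>\varepsilon\}$, split $\psi(y)-\psi(x)$ (legitimate there since each term is absolutely integrable), drop the $\psi(x)(y-x)\cdot\Psi(x)$ piece by radial oddness of the inner integral, apply Fubini, reinsert $\Psi(y)$ by the same oddness, and pass $\varepsilon\to 0^+$ using the absolute-integrability estimate you already established. Your Fubini/absolute-integrability discussion and overall intent are correct, but the symmetrization shortcut as stated has the sign error and the unjustified cancellation; you should replace it by the truncation-and-oddness argument.
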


We next recall the Leibniz-type formulas involving the Riesz $s$-fractional gradient and the Riesz $s$-fractional divergence. To this end, we introduce the following nonlocal operators.

\begin{definition}
For all $\psi,\phi\in C^1_c(\R^n)$ we define the nonlocal operator $\nabla^s_{\rm NL}(\psi,\phi)\colon\R^n\to\R^n$ as
\begin{equation}\label{eq:nablaNL}
\nabla^s_{\rm NL}(\psi,\phi)(x)\coloneq \mu_s\int_{\R^n}\frac{(\phi(y)-\phi(x))(\psi(y)-\psi(x))(y-x)}{|y-x|^{n+s+1}}\,\de y\quad\text{for all $x\in\R^n$}.
\end{equation}
For all $\Psi\in C^1_c(\R^n;\R^n)$ and $\phi\in C^1_c(\R^n)$ we define the nonlocal operator $\div^s_{\rm NL}(\Psi,\phi)\colon\R^n\to\R$ as
\begin{equation}\label{eq:divNL}
\div^s_{\rm NL}(\Psi,\phi)(x)\coloneq \mu_s\int_{\R^n}\frac{(\phi(y)-\phi(x))(\Psi(y)-\Psi(x))\cdot(y-x)}{|y-x|^{n+s+1}}\,\de y\quad\text{for all $x\in\R^n$}.
\end{equation}
\end{definition}

As before, the integrals~\eqref{eq:nablaNL} and~\eqref{eq:divNL} converge for all $x\in\R^n$, and the following $L^p$-type estimates hold.

\begin{proposition}\label{prop:gl-est2}
Let $p\in [1,\infty]$. For all $\psi,\phi\in C^1_c(\R^n)$ and $\Psi\in C_c^1(\R^n;\R^n)$ we have 
\begin{align*}
\|\nabla^s_{\rm NL}(\psi,\phi)\|_{L^p(\R^n)}&\le \frac{4\omega_n\mu_s}{s(1-s)2^s}\|\psi\|_{L^\infty(\R^n)}\|\phi\|^s_{L^p(\R^n)}\|\nabla\phi\|^{1-s}_{L^p(\R^n)},\\
\|\div^s_{\rm NL}(\Psi,\phi)\|_{L^p(\R^n)}&\le \frac{4\omega_n\mu_s}{s(1-s)2^s}\|\Psi\|_{L^\infty(\R^n)}\|\phi\|^s_{L^p(\R^n)}\|\nabla\phi\|^{1-s}_{L^p(\R^n)}.
\end{align*}
\end{proposition}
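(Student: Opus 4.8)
The plan is to absorb one of the two increments appearing in~\eqref{eq:nablaNL} and~\eqref{eq:divNL} into an $L^\infty$ bound, thereby reducing everything to Proposition~\ref{prop:gl-est1}. Fix $x\in\R^n$. Since $|\psi(y)-\psi(x)|\le 2\|\psi\|_{L^\infty(\R^n)}$ for every $y\in\R^n$, the integrand in~\eqref{eq:nablaNL} is dominated in absolute value by $2\|\psi\|_{L^\infty(\R^n)}\,\mu_s\,|\phi(y)-\phi(x)|\,|y-x|^{-n-s}$. This domination already gives the convergence of the integral for every $x$ (near $x$ one uses $|\phi(y)-\phi(x)|\le\|\nabla\phi\|_{L^\infty(\R^n)}|y-x|$ together with $1-s>0$, away from $\supp\phi$ the decay $|y-x|^{-n-s}$ together with $s>0$), and then the pointwise bound
\begin{equation*}
|\nabla^s_{\rm NL}(\psi,\phi)(x)|\ \le\ 2\|\psi\|_{L^\infty(\R^n)}\,\mu_s\int_{\R^n}\frac{|\phi(y)-\phi(x)|}{|y-x|^{n+s}}\,\de y .
\end{equation*}
Up to the factor $2\|\psi\|_{L^\infty(\R^n)}$, the right-hand side is exactly the quantity controlled in the proof of Proposition~\ref{prop:gl-est1} (cf.~Appendix~\ref{app:a}): after the substitution $z=y-x$ one splits $\R^n$ into $\{|z|\le R\}$ and $\{|z|>R\}$, estimates the increment of $\phi$ by $|z|\,\|\nabla\phi\|_{L^p(\R^n)}$ on the first region and by $2\|\phi\|_{L^p(\R^n)}$ on the second (through Minkowski's integral inequality if $p<\infty$, pointwise if $p=\infty$), and then optimizes over $R>0$. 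Taking $L^p$-norms in $x$ and inserting this bound yields the claim for $\nabla^s_{\rm NL}$, the constant $4=2\cdot 2$ accounting precisely for the extra factor $2\|\psi\|_{L^\infty(\R^n)}$.

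The estimate for $\div^s_{\rm NL}$ follows in the very same way: I would only replace the bound on the first increment by $|(\Psi(y)-\Psi(x))\cdot(y-x)|\le 2\|\Psi\|_{L^\infty(\R^n)}\,|y-x|$, so that the integrand in~\eqref{eq:divNL} is again dominated by $2\|\Psi\|_{L^\infty(\R^n)}\,\mu_s\,|\phi(y)-\phi(x)|\,|y-x|^{-n-s}$, and then repeat the argument verbatim with $\|\psi\|_{L^\infty(\R^n)}$ replaced by $\|\Psi\|_{L^\infty(\R^n)}$.

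I do not expect any genuine obstacle here. The whole point is that the product of two increments in~\eqref{eq:nablaNL}--\eqref{eq:divNL} factors as a uniformly bounded multiplier times a single-increment kernel of exactly the type already handled in Proposition~\ref{prop:gl-est1}; consequently the only actual work is the routine tracking of constants and the simultaneous treatment of $p<\infty$ (Minkowski's integral inequality) and $p=\infty$ (its pointwise counterpart) inside the single splitting argument.
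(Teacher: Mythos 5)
Your approach matches the one the paper intends: the paper simply declares the proof of Proposition~\ref{prop:gl-est2} ``analogous to that of Proposition~\ref{prop:gl-est1}'' and omits it, and pulling the bounded increment $|\psi(y)-\psi(x)|\le 2\|\psi\|_{L^\infty(\R^n)}$ (respectively $|\Psi(y)-\Psi(x)|\le 2\|\Psi\|_{L^\infty(\R^n)}$) out of the kernel reduces everything to the single-increment integral already handled there via Minkowski's integral inequality, a near/far split at radius $R$, and optimization over $R$; the factor $2$ doubles the constant, as you say. There is, however, one discrepancy you should have caught before asserting that the argument ``yields the claim.'' Carrying out the optimization over $R$ exactly as in the proof of Proposition~\ref{prop:gl-est1} produces the exponents $\|\phi\|_{L^p(\R^n)}^{1-s}\|\nabla\phi\|_{L^p(\R^n)}^{s}$, with $\phi$ raised to $1-s$ and $\nabla\phi$ to $s$, just as in~\eqref{eq:est-1}--\eqref{eq:est-2}; but the statement you are proving displays $\|\phi\|^s_{L^p(\R^n)}\|\nabla\phi\|^{1-s}_{L^p(\R^n)}$, i.e.\ the powers swapped. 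This is in fact a misprint in the paper: in the proof of Lemma~\ref{lem:bvomega-rn2} the bounds from Propositions~\ref{prop:gl-est1} and~\ref{prop:gl-est2} are summed, giving the constant $6=2+4$ in front of the single product $\|u\|_{L^p(\R^n)}\|\psi\|^{1-s}_{L^{p'}(\R^n)}\|\nabla \psi\|^s_{L^{p'}(\R^n)}$, which is only possible if both propositions carry the \emph{same} exponents $1-s$ on the function and $s$ on its gradient. Your derivation therefore produces the inequality the paper actually needs and uses; it simply does not coincide with the printed statement, and the honest conclusion is that the printed statement has the exponents reversed, not that your optimization delivered the displayed powers.
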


The proof of Proposition~\ref{prop:gl-est2} is analogous to that of Proposition~\ref{prop:gl-est1} and is therefore omitted. As observed in~\cite[Lemmas~2.6 and~2.7]{CS19}, the following two Leibniz-type formulas hold.

\begin{proposition}\label{prop:nonlocal-leibniz}
For all $\psi,\phi\in C^1_c(\R^n)$ and $\Psi\in C^1_c(\R^n;\R^n)$, we have
\begin{align}
\nabla^s(\psi\phi)(x)&=\psi(x)\nabla^s\phi(x)+\nabla^s\psi(x)\phi(x)+\nabla^s_{\rm NL}(\psi,\phi)(x)& &\text{for all $x\in\R^n$},\label{eq:leibniz}\\
\div^s(\Psi\phi)(x)&=\Psi(x)\cdot\nabla^s\phi(x)+\div^s\Psi(x)\phi(x)+\div^s_{\rm NL}(\Psi,\phi)(x)& &\text{for all $x\in\R^n$}.\label{eq:leibniz2}
\end{align} 
\end{proposition}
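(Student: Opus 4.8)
The plan is to reduce both identities to an elementary algebraic decomposition of the product inside the defining integrals, followed by a term-by-term splitting that must be justified by absolute convergence. Fix $x\in\R^n$. For the gradient formula, start from the pointwise identity
\begin{equation*}
\psi(y)\phi(y)-\psi(x)\phi(x)=\psi(x)\bigl(\phi(y)-\phi(x)\bigr)+\bigl(\psi(y)-\psi(x)\bigr)\phi(x)+\bigl(\psi(y)-\psi(x)\bigr)\bigl(\phi(y)-\phi(x)\bigr),
\end{equation*}
valid for every $y\in\R^n$. Multiplying by $\mu_s\,(y-x)|y-x|^{-(n+s+1)}$ and integrating over $\R^n$ formally yields $\nabla^s(\psi\phi)(x)$ on the left and, on the right, the three quantities $\psi(x)\nabla^s\phi(x)$, $\nabla^s\psi(x)\phi(x)$, and $\nabla^s_{\rm NL}(\psi,\phi)(x)$, which is exactly~\eqref{eq:leibniz}.

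The only point requiring care is that the integral of the sum equals the sum of the integrals, i.e.\ that each of the three pieces is individually absolutely integrable in $y$. This follows from the standard two-regime estimate already used to show that $\nabla^s\psi$, $\div^s\Psi$, $\nabla^s_{\rm NL}$, $\div^s_{\rm NL}$ are well defined (cf.\ the remarks after Definitions~\ref{def:Riesz-grad} and the one preceding Proposition~\ref{prop:gl-est2}): near $y=x$ one uses $|\psi(y)-\psi(x)|\le \|\nabla\psi\|_{L^\infty}|y-x|$ and $|\phi(y)-\phi(x)|\le \|\nabla\phi\|_{L^\infty}|y-x|$, so the first two integrands are $O(|y-x|^{-(n+s-1)})$ and the third is $O(|y-x|^{-(n+s-2)})$, all locally integrable since $s\in(0,1)$; for $|y|$ large, the compact support of $\psi,\phi$ makes every integrand $O(|y|^{-(n+s)})$, which is integrable at infinity since $s>0$. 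Hence the splitting is legitimate and each resulting integral coincides with the asserted term by Definitions~\ref{def:Riesz-grad} and~\eqref{eq:nablaNL}.

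For the divergence formula~\eqref{eq:leibniz2}, apply the same decomposition to the scalar product $\Psi(y)\phi(y)-\Psi(x)\phi(x)$ componentwise, then take the dot product with $(y-x)$ and integrate; the absolute convergence of each of the three resulting pieces is obtained verbatim as above, with $\|\nabla\Psi\|_{L^\infty}$ in place of $\|\nabla\psi\|_{L^\infty}$, and the limits are identified with $\Psi(x)\cdot\nabla^s\phi(x)$, $\div^s\Psi(x)\,\phi(x)$, and $\div^s_{\rm NL}(\Psi,\phi)(x)$ via Definition~\ref{def:Riesz-grad} and~\eqref{eq:divNL}. Since there is no genuine analytic obstacle beyond this integrability bookkeeping, the argument is complete once the splitting has been justified; one may, if desired, record the convergence bounds quantitatively as in Propositions~\ref{prop:gl-est1} and~\ref{prop:gl-est2}, but this is not needed for the pointwise identity.
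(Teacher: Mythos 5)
Your proof is correct. The paper itself does not supply a proof of this proposition but instead refers to \cite[Lemmas~2.6 and~2.7]{CS19}; your argument — the elementary algebraic identity
$\psi(y)\phi(y)-\psi(x)\phi(x)=\psi(x)(\phi(y)-\phi(x))+(\psi(y)-\psi(x))\phi(x)+(\psi(y)-\psi(x))(\phi(y)-\phi(x))$
followed by multiplication by the kernel and term-by-term integration, justified by the $O(|y-x|^{-(n+s-1)})$ estimate near $y=x$ (from the Lipschitz bounds on $\psi,\phi,\Psi$) and the $O(|y-x|^{-(n+s)})$ estimate at infinity (from boundedness/compact support) — is precisely the standard and natural route, and I see no gap in the integrability bookkeeping.
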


We conclude this subsection by recalling the relation between the Riesz $s$-fractional gradient and the classical gradient, as well as between the Riesz $s$-fractional divergence and the classical divergence. To this end, we introduce the notion of the \emph{$\alpha$-Riesz potential} for $\alpha\in(0,n)$. We define
\[
\gamma_\alpha\coloneq\frac{2^\alpha\pi^{\frac{n}{2}} \Gamma\left(\frac{\alpha}{2}\right)}{\Gamma\left(\frac{n-\alpha}{2}\right)}=\frac{n-\alpha }{\mu_{1-\alpha}}\in (0,\infty),
\]
where we note that $\mu_{s}$ in~\eqref{muu} can also be defined for $s \in (1-n,0]$. In view of~\eqref{eq:mus}, we have
\begin{equation}\label{eq:gammaa}
\lim_{\alpha\to 0^+}\alpha\gamma_\alpha=\omega_n.
\end{equation}

\begin{definition}
Let $\mu\in\mathcal M(\R^n)$ be a Radon measure satisfying
\begin{equation}\label{eq:Ia}
\int_{\R^n}\frac{\de|\mu|(y)}{(1+|y|)^{n-\alpha}}<\infty.
\end{equation}
For $\alpha \in (0,n)$, we define the \emph{$\alpha$-Riesz potential} $\mathcal I^\alpha \mu\colon\R^n\to\R$ as
\[
\mathcal I^\alpha \mu(x)\coloneq \frac{1}{\gamma_\alpha}\int_{\R^n}\frac{\de \mu(y)}{|y-x|^{n-\alpha}}\quad\text{for a.e.\ $x\in\R^n$}.
\]
When $\mu=f\,\mathcal L^n$, we simply write $\mathcal I^\alpha f$ instead of $\mathcal I^\alpha \mu$.
\end{definition}

The operator $\mathcal I^\alpha$ is well-defined in view of the following result.

\begin{proposition}[{\cite[Theorem~1.1, Chapter~2]{Mizuta96}}]\label{prop:mizuta}
Let $\mu\in\mathcal M(\R^n)$ be a Radon measure satisfying~\eqref{eq:Ia}. Then, $\mathcal I^\alpha\mu(x)$ is well-defined for a.e.\ $x\in\R^n$ and $\mathcal I^\alpha\mu\in L^1_{\rm loc}(\R^n)$.
\end{proposition}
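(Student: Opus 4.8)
The plan is to reduce to the case of a nonnegative measure and then to apply Tonelli's theorem, estimating the kernel $x\mapsto|y-x|^{\alpha-n}$ integrated over an arbitrary ball. Since $\mathcal I^\alpha\mu$ is defined componentwise and, wherever the defining integral converges absolutely, $|\mathcal I^\alpha\mu(x)|\le\gamma_\alpha^{-1}\int_{\R^n}|y-x|^{\alpha-n}\,\de|\mu|(y)$, and since $|\mu|$ satisfies the very same integrability assumption~\eqref{eq:Ia}, it is enough to prove the statement for $\mu\ge0$, i.e.\ to show that $g(x):=\int_{\R^n}|y-x|^{\alpha-n}\,\de\mu(y)$ is finite for a.e.\ $x\in\R^n$ and lies in $L^1_{\rm loc}(\R^n)$. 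I would fix $R>0$ and establish that $g\in L^1(B_R)$.

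Since $\mu$ is a Radon measure on $\R^n$ it is $\sigma$-finite, and $(x,y)\mapsto|y-x|^{\alpha-n}$ is nonnegative and Borel on $B_R\times\R^n$, so Tonelli's theorem gives
\[
\int_{B_R}g(x)\,\de x=\int_{\R^n}\Big(\int_{B_R}|y-x|^{\alpha-n}\,\de x\Big)\,\de\mu(y),
\]
and the task reduces to bounding the inner integral by a constant multiple of $(1+|y|)^{\alpha-n}$. If $|y|\le 2R$, then $B_R\subseteq B_{3R}(y)$, so $\int_{B_R}|y-x|^{\alpha-n}\,\de x\le\int_{B_{3R}}|z|^{\alpha-n}\,\de z=\omega_n(3R)^\alpha/\alpha$, which is finite because $\alpha>0$, and this constant is absorbed into the desired form using $(1+|y|)^{\alpha-n}\ge(1+2R)^{\alpha-n}$. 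If instead $|y|>2R$, then $|y-x|\ge|y|-R\ge|y|/2$ for every $x\in B_R$, so the inner integral is at most $2^{n-\alpha}\mathcal L^n(B_R)\,|y|^{\alpha-n}$, and the elementary bound $|y|\ge\frac{2R}{1+2R}(1+|y|)$ (valid since $t\mapsto t/(1+t)$ is increasing) again yields an estimate of the form $C(n,\alpha,R)(1+|y|)^{\alpha-n}$.

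Plugging this into the Tonelli identity and using~\eqref{eq:Ia} gives $\int_{B_R}g\,\de x\le C(n,\alpha,R)\int_{\R^n}(1+|y|)^{\alpha-n}\,\de\mu(y)<\infty$, so $g\in L^1(B_R)$, and in particular $g(x)<\infty$ for a.e.\ $x\in B_R$. Letting $R\to\infty$ shows that $g$ is finite a.e.\ on $\R^n$ and that $g\in L^1_{\rm loc}(\R^n)$, which by the initial reduction proves that $\mathcal I^\alpha\mu$ is well-defined a.e.\ and locally integrable. I do not expect a genuine obstacle: the only points needing care are verifying the $\sigma$-finiteness that legitimizes Tonelli's theorem and keeping the constant uniform across the two regimes $|y|\le 2R$ and $|y|>2R$; the local integrability of the singular kernel is automatic from $\alpha\in(0,n)$, so no real analytic difficulty arises.
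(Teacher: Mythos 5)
Your proof is correct. The paper does not give its own argument here but simply cites Mizuta's book; your Tonelli computation with the near/far decomposition of $y$ relative to the ball $B_R$ is essentially the standard argument appearing there, and all the estimates (the change of variables for $|y|\le 2R$, the lower bound $|y-x|\ge|y|/2$ for $|y|>2R$, and the comparison of $|y|$ with $1+|y|$) are carried out correctly, so nothing is missing.
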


% \begin{proof}
% The fact that $\mathcal{I}^\alpha \mu(x)$ is well defined for a.e.\ $x\in\R^n $ follows from~\cite[Theorem~1.1, Chapter~2]{Mizuta96}. 

% To show that $\mathcal{I}^\alpha\mu\in L^1_{\mathrm{loc}}(\R^n) $, we observe that for any $R>0$, $x \in B_R$, and $y\in\R^n\setminus B_1(x)$, we have
% \[
% 1+|y|\le 1+|y-x|+|x|\le (2+R)|y-x|.
% \]
% Therefore,
% \[
% \int_{B_R}\int_{\R^n\setminus B_1(x)} \frac{\de|\mu|(y)}{|y-x|^{n-\alpha}}\,\de x
% \le \frac{\omega_n R^n (2+R)^{n-\alpha}}{n} \int_{\R^n} \frac{\de |\mu|(y)}{(1+|y|)^{n-\alpha}}.
% \]
% Moreover, by Fubini's Theorem,
% \[
% \int_{B_R}\int_{B_1(x)}\frac{\de |\mu|(y)}{|y-x|^{n-\alpha}}\,\de x
% \le \int_{B_{R+1}}\int_{B_1(y)} \frac{1}{|y-x|^{n-\alpha}}\,\de x\,\de |\mu|(y)
% = \frac{\omega_n}{\alpha} |\mu|(B_{R+1}),
% \]
% which implies that $\mathcal I^\alpha\mu\in L^1_{\rm loc}(\R^n)$.
% \end{proof}

As shown in~\cite[Theorem~1.2]{ShSp15} and~\cite[Proposition~2.2]{CS19}, the following relations hold between $\nabla^s$ and $\nabla$, and between $\div^s$ and $\div$.

\begin{proposition}\label{prop:Riesz}
For all $\psi\in C^1_c(\R^n)$ we have
\[
\nabla^s\psi(x)=\nabla\mathcal I^{1-s}\psi(x)=\mathcal I^{1-s}\nabla\psi(x)\quad\text{for all $x\in\R^n$}.
\]
For all $\Psi\in C^1_c(\R^n;\R^n)$ we have 
\[
\div^s\Psi(x)=\div\mathcal I^{1-s}\Psi(x)=\mathcal I^{1-s}\div\Psi(x)\quad\text{for all $x\in\R^n$}.
\]
\end{proposition}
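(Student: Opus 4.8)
The plan is to prove the identities $\nabla^s\psi = \nabla\mathcal I^{1-s}\psi = \mathcal I^{1-s}\nabla\psi$ for $\psi\in C^1_c(\R^n)$ by a Fourier-analytic argument, which is the cleanest way to handle nonlocal operators of this convolution type. First I would observe that all three objects are convolutions of $\psi$ (or $\nabla\psi$) against appropriate kernels: $\nabla^s\psi$ is obtained by convolving against the distributional derivative of the Riesz kernel, $\mathcal I^{1-s}\psi$ is convolution against the Riesz kernel $c_n|x|^{-(n-(1-s))}=c_n|x|^{-(n-1+s)}$, and $\mathcal I^{1-s}\nabla\psi$ convolves $\nabla\psi$ against the same kernel. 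Since $\psi\in C^1_c(\R^n)$, all convolutions are well-defined (the Riesz kernel is locally integrable because $n-1+s<n$, and it has the right decay at infinity), smooth, and one may freely pass $\nabla$ inside the convolution: $\nabla(\mathcal I^{1-s}\psi)=\mathcal I^{1-s}(\nabla\psi)$ follows directly by differentiating under the integral sign, using that $\nabla(K*\psi)=K*\nabla\psi$ when $K\in L^1_{\mathrm{loc}}$ with suitable decay and $\psi\in C^1_c$. This disposes of the second equality essentially for free.

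The substantive equality is $\nabla^s\psi=\mathcal I^{1-s}\nabla\psi$ (equivalently $\nabla^s\psi=\nabla\mathcal I^{1-s}\psi$). Here I would take Fourier transforms. On the Fourier side, $\widehat{\mathcal I^{1-s}f}(\xi)=|2\pi\xi|^{-(1-s)}\hat f(\xi)$ (up to the normalization built into $\gamma_\alpha$), and $\widehat{\nabla\psi}(\xi)=2\pi i\xi\,\hat\psi(\xi)$, so $\widehat{\mathcal I^{1-s}\nabla\psi}(\xi)=2\pi i\xi\,|2\pi\xi|^{-(1-s)}\hat\psi(\xi)=2\pi i\xi\,|2\pi\xi|^{s-1}\hat\psi(\xi)$. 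On the other hand, the symbol of $\nabla^s$ should be computed directly from the representation formula~\eqref{eq:nablas}: writing $\nabla^s\psi(x)=\mu_s\int(\psi(y)-\psi(x))\frac{y-x}{|y-x|^{n+s+1}}\,\de y$, one recognizes this as a principal-value-type singular integral whose Fourier multiplier can be computed by a standard (if slightly delicate) calculation of the Fourier transform of the kernel $z\mapsto\mu_s z/|z|^{n+s+1}$ interpreted distributionally; the constant $\mu_s$ in~\eqref{muu} is precisely chosen so that this multiplier equals $2\pi i\xi\,|2\pi\xi|^{s-1}$. Matching multipliers and invoking injectivity of the Fourier transform on, say, the Schwartz class (noting $\nabla^s\psi$ and $\mathcal I^{1-s}\nabla\psi$ are both tempered distributions — indeed continuous functions — since $\psi\in C^1_c$) yields the claim. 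The divergence identities follow by applying the gradient identities componentwise, since $\div^s$ is just the trace-type contraction of the same kernel against a vector field.

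Alternatively — and this is probably the route the authors take, since they cite~\cite{ShSp15, CS19} — one avoids the Fourier computation of the $\mu_s$-multiplier by instead establishing $\nabla\mathcal I^{1-s}\psi=\mathcal I^{1-s}\nabla\psi$ (the easy differentiation-under-the-integral step above) and then proving $\nabla^s\psi=\mathcal I^{1-s}\nabla\psi$ by a direct manipulation of the kernels: one writes $\mathcal I^{1-s}\nabla\psi(x)=\frac{1}{\gamma_{1-s}}\int\frac{\nabla\psi(y)}{|y-x|^{n-1+s}}\,\de y$, integrates by parts (legitimate since $\psi$ has compact support and the kernel singularity $|y-x|^{-(n-1+s)}$ is integrable with an integrable gradient $-(n-1+s)\frac{y-x}{|y-x|^{n+1+s}}$ away from the diagonal, handled by excising a ball $B_\varepsilon(x)$ and letting $\varepsilon\to0$, the boundary term vanishing because $\mathcal H^{n-1}(\partial B_\varepsilon)\,\varepsilon^{-(n-1+s)}=\omega_n\varepsilon^{1-s}\to0$), and arrives at $\frac{n-1+s}{\gamma_{1-s}}\int\frac{(\psi(y)-\psi(x))(y-x)}{|y-x|^{n+1+s}}\,\de y$, where the extra $\psi(x)$ may be inserted freely since $\int_{B_R(x)\setminus B_\varepsilon(x)}\frac{y-x}{|y-x|^{n+1+s}}\,\de y=0$ by antisymmetry. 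Since $\frac{n-1+s}{\gamma_{1-s}}=\frac{n-(1-s)}{\gamma_{1-s}}=\mu_s$ by the defining relation $\gamma_\alpha=\frac{n-\alpha}{\mu_{1-\alpha}}$ (with $\alpha=1-s$), this is exactly $\nabla^s\psi(x)$.

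The main obstacle in either approach is purely technical: justifying the interchange of limits/derivatives and the integration by parts near the singularity of the Riesz kernel. The compact support and $C^1$ regularity of $\psi$ make every step safe — the key quantitative points are that $|y-x|^{-(n-1+s)}$ is locally integrable (since $n-1+s<n$), that the difference quotient $|\psi(y)-\psi(x)|\le\|\nabla\psi\|_\infty|y-x|$ tames the extra power in $\nabla^s\psi$'s kernel near the diagonal (making the integrand $O(|y-x|^{-(n-1+s)})$, hence integrable), and that all boundary contributions on shrinking spheres scale like $\varepsilon^{1-s}\to0$. I would present the kernel-manipulation argument (the third paragraph) as the proof, relegating the singular-integral bookkeeping to the $\varepsilon$-excision device, and simply cite~\cite[Theorem~1.2]{ShSp15} and~\cite[Proposition~2.2]{CS19} for the full details.
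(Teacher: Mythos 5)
The paper does not actually present a proof of this proposition: it states the identities and cites~\cite[Theorem~1.2]{ShSp15} and~\cite[Proposition~2.2]{CS19} for the derivation. Your second (kernel-manipulation) argument is exactly the route those references take, and the constant bookkeeping via $\gamma_{1-s}=\frac{n-1+s}{\mu_s}$ is correct, so you have correctly anticipated the intended argument. The Fourier-multiplier alternative you sketch is also a valid route, though somewhat heavier than needed given that all objects here are pointwise-defined continuous functions.

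One small but genuine slip in the bookkeeping of the $\varepsilon$-excision: the bare boundary term on $\partial B_\varepsilon(x)$ does \emph{not} scale like $\varepsilon^{1-s}$. Your formula $\mathcal H^{n-1}(\partial B_\varepsilon)\,\varepsilon^{-(n-1+s)} = \omega_n\varepsilon^{n-1}\cdot\varepsilon^{-(n-1+s)}=\omega_n\varepsilon^{-s}$, which blows up. The term that actually appears from integration by parts is
\[
-\varepsilon^{-(n+s)}\int_{\partial B_\varepsilon(x)}\psi(y)\,(y-x)\,\de\mathcal H^{n-1}(y),
\]
and to see that it vanishes you must first use $\int_{\partial B_\varepsilon(x)}(y-x)\,\de\mathcal H^{n-1}(y)=0$ to replace $\psi(y)$ by $\psi(y)-\psi(x)$; only then does the bound $|\psi(y)-\psi(x)|\le\|\nabla\psi\|_\infty\,\varepsilon$ produce the favorable exponent
\[
\varepsilon^{-(n+s)}\cdot\varepsilon\cdot\varepsilon\cdot\omega_n\varepsilon^{n-1}=\omega_n\varepsilon^{1-s}\to 0.
\]
You invoke the same spherical antisymmetry later to insert the $-\psi(x)$ into the bulk integral, so the ingredient is already present in your write-up — it just needs to be applied to the boundary term too. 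With that correction the argument is complete.
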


For additional properties of the Riesz $s$-fractional gradient and $s$-divergence, we refer the reader to~\cite{CS19,CS23-1,ShSp15,ShSp18,Silhavy20}.

%--------------------------------------------
%	Riesz fractional variation
%--------------------------------------------

\subsection{Riesz fractional \texorpdfstring{$s$}{s}-variation}

In this subsection, we recall the definition of the \emph{Riesz $s$-fractional variation}, $s\in (0,1)$, originally introduced in~\cite{CS19}, which relies on formula~\eqref{eq:dual}. We begin by introducing the notion of the \emph{weak $s$-fractional gradient}.

\begin{definition}
Let $\Omega\subseteq\R^n$ be an open set, $s\in (0,1)$, $p\in [1,\infty]$, and $u\in L^p(\R^n)$. We say that $u$ has a \emph{weak $s$-fractional gradient} in $\Omega$ if there exists a function $v\in L^1_{\rm loc}(\Omega;\R^n)$ such that
\begin{equation}\label{eq:weak-gradient}
\int_\Omega v(y)\cdot\Psi(y)\,\de y=-\int_{\R^n}u(y)\div^s\Psi(y) \,\de y\quad\text{for all $\Psi\in C^1_c(\Omega;\R^n)$}.
\end{equation}
\end{definition}

By definition, when it exists, the weak $s$-fractional gradient is unique. Moreover, in view of Proposition~\ref{prop:dual}, every function $\psi\in C^1_c(\R^n)$ admits a weak fractional gradient in $\R^n$, which coincides with the Riesz $s$-fractional gradient. Therefore, with a slight abuse of notation, we denote the weak $s$-fractional gradient of $u$ by $\nabla^su$. 

\begin{remark}
We point out that, in order to define the weak $s$-fractional gradient of $u$ in $\Omega$, we need that $u$ is defined on the whole of $\R^n$ and satisfies a suitable integrability condition at infinity. Otherwise, the second integral in~\eqref{eq:weak-gradient} would not be well-defined, since $\div^s \Psi$ does not have compact support.
\end{remark}

As a consequence of~\cite[Proposition~3.2(iii) and Corollary 3.6]{CS23-1}, we have the following result.

\begin{proposition}\label{prop:I1-sDu}
Let $s\in(0,1)$ and $u\in L^\infty(\R^n)$ with $Du\in\mathcal M_b(\R^n;\R^n)$. Then, $u$ has a weak $s$-fractional gradient $\nabla^s u\in L^1_{\rm loc}(\R^n;\R^n)$, and
\[
\nabla^su=\mathcal I^{1-s}Du\quad\text{in $\R^n$}.
\]
Moreover, for all $R>0$ we have
\begin{equation}\label{eq:V1s}
\|\nabla^s u\|_{L^1(B_R)}\le \frac{2\omega_nR^{1-s}\mu_s}{(1-s)2^s}|Du|(B_{3R})+\frac{2^{1+s}\omega_n^2R^{n-s}\mu_s\Gamma(1-s)}{s}\|u\|_{L^\infty(\R^n)}.
\end{equation}
\end{proposition}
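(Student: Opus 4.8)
The plan is to take $v=\mathcal{I}^{1-s}Du$ (understood componentwise) as the candidate weak $s$-fractional gradient, to verify that it is admissible and satisfies the duality identity~\eqref{eq:weak-gradient}, and then to extract~\eqref{eq:V1s} by splitting the Riesz kernel near and far from the diagonal. The first point is essentially free: since $Du\in\mathcal{M}_b(\R^n;\R^n)$ is a finite measure and $n-(1-s)=n-1+s>0$, each component of $Du$ satisfies the integrability condition~\eqref{eq:Ia} with $\alpha=1-s$ trivially, so Proposition~\ref{prop:mizuta} yields $\mathcal{I}^{1-s}Du\in L^1_{\mathrm{loc}}(\R^n;\R^n)$.

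For the duality identity I would fix $\Psi\in C^1_c(\R^n;\R^n)$. Because $\Psi$ has compact support and $\int_{\R^n}|\Psi(x)|\,|y-x|^{-(n-1+s)}\,\de x$ is bounded uniformly in $y$ (split the $x$-integral at $|x-y|=1$, using $n-1+s<n$ on the inner part and boundedness of the kernel on the outer part), Tonelli's theorem and the symmetry of the Riesz kernel give
\begin{equation*}
\int_{\R^n}\mathcal{I}^{1-s}Du(x)\cdot\Psi(x)\,\de x=\int_{\R^n}\mathcal{I}^{1-s}\Psi(y)\cdot\de Du(y).
\end{equation*}
Now $\mathcal{I}^{1-s}\Psi$ is the convolution of $\Psi$ with the locally integrable Riesz kernel, hence it lies in $C^1(\R^n;\R^n)\cap L^\infty(\R^n;\R^n)$, with $\nabla(\mathcal{I}^{1-s}\Psi)=\mathcal{I}^{1-s}\nabla\Psi$ and decay $|\mathcal{I}^{1-s}\Psi(x)|\le C|x|^{-(n-1+s)}$ as $|x|\to\infty$. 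Testing the integration-by-parts characterisation of $Du$ against the fields $\chi_R\,\mathcal{I}^{1-s}\Psi\in C^1_c(\R^n;\R^n)$, with $\chi_R\in C^1_c(B_{2R})$, $\chi_R\equiv1$ on $B_R$, $|\nabla\chi_R|\le C/R$, and letting $R\to\infty$ — the cross term is $O\big(\|u\|_{L^\infty(\R^n)}R^{-s}\big)\to0$, the measure term converges since $\mathcal{I}^{1-s}\Psi\in L^\infty$ and $|Du|$ is finite, and the main term converges since $\div^s\Psi\in L^1(\R^n)$ by Proposition~\ref{prop:gl-est1} and $u\in L^\infty(\R^n)$ — one obtains $\int_{\R^n}\mathcal{I}^{1-s}\Psi\cdot\de Du=-\int_{\R^n}u\,\div(\mathcal{I}^{1-s}\Psi)\,\de x$, and $\div(\mathcal{I}^{1-s}\Psi)=\div^s\Psi$ by Proposition~\ref{prop:Riesz}. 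Combined with the displayed identity this is exactly~\eqref{eq:weak-gradient} with $v=\mathcal{I}^{1-s}Du$, so $\nabla^su=\mathcal{I}^{1-s}Du\in L^1_{\mathrm{loc}}(\R^n;\R^n)$.

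For~\eqref{eq:V1s} I would start from $\nabla^su(x)=\tfrac{\mu_s}{n-1+s}\int_{\R^n}|y-x|^{-(n-1+s)}\,\de Du(y)$ and, for $x\in B_R$, split the $y$-integral at $|y-x|=2R$. The near part is kept in terms of $|Du|$: by Tonelli the set $\{x\in B_R:|x-y|<2R\}$ is nonempty only if $y\in B_{3R}$, whence
\begin{equation*}
\int_{B_R}\int_{\{|y-x|<2R\}}\frac{\de|Du|(y)}{|y-x|^{n-1+s}}\,\de x\le|Du|(B_{3R})\int_{B_{2R}}\frac{\de z}{|z|^{n-1+s}}=\frac{\omega_n(2R)^{1-s}}{1-s}\,|Du|(B_{3R}),
\end{equation*}
which after multiplication by $1/\gamma_{1-s}$ gives the first term of~\eqref{eq:V1s} (up to the explicit constant). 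The far part must be converted into an $\|u\|_{L^\infty}$ bound by integration by parts, and here lies the only real difficulty: $u$ is merely bounded with finite total variation, so it has no trace on the sphere $\{|y-x|=2R\}$. I would circumvent this by mollifying $u_\varepsilon=u*\rho_\varepsilon$, so that $\|u_\varepsilon\|_{L^\infty}\le\|u\|_{L^\infty}$, $\nabla u_\varepsilon=(Du)*\rho_\varepsilon\in L^1(\R^n;\R^n)$, and $\mathcal{I}^{1-s}\nabla u_\varepsilon=(\mathcal{I}^{1-s}Du)*\rho_\varepsilon\to\nabla^su$ in $L^1_{\mathrm{loc}}$; then the divergence theorem on $\{|y-x|\ge2R\}$ applied to the smooth $u_\varepsilon$ (with vanishing contribution at infinity) bounds $\big|\int_{\{|y-x|\ge2R\}}|y-x|^{-(n-1+s)}\,\de(Du_\varepsilon)(y)\big|$ by $\|u\|_{L^\infty}$ times explicit multiples of $\omega_n(2R)^{-s}$, coming from $\int_{\{|z|\ge2R\}}|z|^{-(n+s)}\,\de z$ and $\int_{\{|z|=2R\}}|z|^{-(n-1+s)}\,\de\mathcal{H}^{n-1}$. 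Integrating over $x\in B_R$, of measure $\omega_nR^n/n$, multiplying by $1/\gamma_{1-s}$, and passing $\varepsilon\to0$ produces the second term of~\eqref{eq:V1s}. Careful bookkeeping of these radial integrals reproduces~\eqref{eq:V1s}; alternatively, the whole statement is precisely~\cite[Proposition~3.2(iii) and Corollary~3.6]{CS23-1}.

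The step I expect to be the main obstacle is the far-field integration by parts in the proof of~\eqref{eq:V1s}: the test ``function'' $y\mapsto|y-x|^{-(n-1+s)}$ is not compactly supported and $u$ has no boundary trace, so the argument is made rigorous only through the mollification above together with the (elementary but necessary) stability $\mathcal{I}^{1-s}\big((Du)*\rho_\varepsilon\big)\to\mathcal{I}^{1-s}Du$ in $L^1_{\mathrm{loc}}$. The identification $\nabla^su=\mathcal{I}^{1-s}Du$ is comparatively routine once one observes that $\mathcal{I}^{1-s}\Psi$, although not compactly supported, decays fast enough to be tested against the finite measure $Du$ after a harmless truncation.
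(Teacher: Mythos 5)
The paper itself offers no proof here---it simply cites \cite[Proposition~3.2(iii) and Corollary~3.6]{CS23-1}---so your self-contained sketch is a genuine alternative. Its overall architecture is correct. The identification $\nabla^s u=\mathcal I^{1-s}Du$ via Tonelli (to move $\mathcal I^{1-s}$ onto $\Psi$), the truncation $\chi_R\,\mathcal I^{1-s}\Psi$ with the cross term dying like $R^{-s}$, and Proposition~\ref{prop:Riesz} to convert $\div(\mathcal I^{1-s}\Psi)$ into $\div^s\Psi$ is clean and exactly the right path. The near-part estimate for~\eqref{eq:V1s} by Fubini is also fine (in fact your constant is a factor $n-1+s$ smaller than the one stated, since you work through $\mathcal I^{1-s}|Du|$ rather than through the finite-difference formula~\eqref{eq:nablas}; only the structure matters here, so this is harmless).

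Two remarks on the far part. First, it is indeed the delicate step, and your mollification strategy works, but be aware of the small bookkeeping issues it introduces: the integration by parts on $\{|y-x|\ge 2R\}$ for $u_\varepsilon$ must be done by exhaustion with annuli (the outer boundary term decays like $M^{-s}$), and since $|Du_\varepsilon|(B_{3R})\le |Du|(B_{3R+\varepsilon})$ one should either shrink the splitting radius slightly or invoke outer regularity before letting $\varepsilon\to 0^+$. Second, the claim that ``careful bookkeeping reproduces~\eqref{eq:V1s}'' is too strong: the divergence-theorem route yields a constant of the form $C(n,s)\,\omega_n^2\mu_s R^{n-s}/(s\,2^s)$ with $C(n,s)$ rational in $n,s$, not the factor $\Gamma(1-s)$ appearing in~\eqref{eq:V1s}. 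Your bound is in fact sharper, and since only the $s\to 1^-$ behaviour of the constant is used downstream (Remark~\ref{rem:behaviour}), this discrepancy is immaterial---but it is not a literal reproduction of the stated inequality. An arguably lighter route for the far part, once $\nabla^s u=\mathcal I^{1-s}Du$ is established, is to verify by the same mollification that the pointwise finite-difference representation~\eqref{eq:nablas} persists for $u\in L^\infty(\R^n)$ with $Du\in\mathcal M_b(\R^n;\R^n)$; then the far field is bounded immediately by $2\mu_s\|u\|_{L^\infty(\R^n)}\int_{|h|\ge 2R}|h|^{-(n+s)}\,\de h$ with no integration by parts on an exterior domain at all.
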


\begin{remark}\label{rem:behaviour}
Let us state the behavior of the constants appearing in~\eqref{eq:V1s} as $s\to 1^-$, which will play a crucial role in the proof of the $\Gamma$-limsup inequality. By~\eqref{eq:mus} and the fact that $x\Gamma(x)\to 1$ as $x\to 0^+$ we have
\[
\lim_{s\to 1^-}\frac{2\omega_nR^{1-s}\mu_s}{(1-s)2^s}=n,\qquad \lim_{s\to 1^-}\frac{2^{1+s}\omega_n^2R^{n-s}\mu_s\Gamma(1-s)}{s}=4n\omega_n R^{n-1}.
\]
\end{remark}

We are now in a position to define the notion of \emph{$s$-fractional variation}.

\begin{definition}
Let $\Omega \subseteq\R^n$ be an open set, $s\in (0,1)$, $p\in[1,\infty]$, and $u\in L^p(\R^n)$. We define the \emph{$s$-fractional variation} of $u$ in $\Omega$ as
\[
V^s(u,\Omega) \coloneqq \sup\left\{\int_{\R^n}u(y)\div^s\Psi(y)\,\de y:\Psi\in C^1_c(\Omega;\R^n),\,\|\Psi\|_{L^\infty(\Omega)}\le 1\right\}.
\]
\end{definition}

As in the case of the weak $s$-fractional gradient, the function $u$ must be defined on the whole of $\R^n$ in order to define the $s$-fractional variation of $u$ in $\Omega$. As shown in~\cite[Theorem~3.2]{CS19} for the case $\Omega=\R^n$ and $p=1$ (see also~\cite[Section~1]{CS24} for the general case), one can exploit the Riesz representation theorem to obtain the following relation between the $s$-fractional variation and vector-valued Radon measures.

\begin{proposition}\label{prop:struct}
Let $\Omega \subseteq\R^n$ be an open set, $s\in (0,1)$, $p\in [1,\infty]$, and $u\in L^p(\R^n)$. Then, $u$ has finite $s$-fractional variation in $\Omega$ if and only if there exists a Radon measure $D^su\in\mathcal M_b(\Omega;\R^n)$ satisfying
\begin{equation}\label{eq:struct}
\int_\Omega\Psi(y)\cdot\de D^su(y)=-\int_{\R^n}u(y)\div^s\Psi(y)\,\de y\quad\text{for all $\Psi\in C^1_c(\Omega;\R^n)$}.
\end{equation}
In particular, we have
\[
V^s(u,A)=|D^su|(A)\quad\text{for all open sets $A\subseteq\Omega$}.
\]
\end{proposition}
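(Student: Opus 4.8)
The statement to prove is Proposition~\ref{prop:struct}: that finiteness of $V^s(u,\Omega)$ is equivalent to the existence of a finite $\R^n$-valued Radon measure $D^su$ representing the nonlocal integration by parts~\eqref{eq:struct}, and that $V^s(u,A)=|D^su|(A)$ on open subsets. The plan is the standard Riesz-representation argument, adapted to the fact that the test-function pairing here is $\Psi\mapsto -\int_{\R^n}u\,\div^s\Psi$ rather than the classical $\Psi\mapsto -\int_\Omega u\,\div\Psi$.

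First I would treat the implication ``$\Leftarrow$'': if such a measure $D^su\in\mathcal M_b(\Omega;\R^n)$ exists, then for every $\Psi\in C^1_c(\Omega;\R^n)$ with $\|\Psi\|_{L^\infty(\Omega)}\le 1$ we have $\int_{\R^n}u\,\div^s\Psi = -\int_\Omega\Psi\cdot\de D^su \le |D^su|(\Omega)<\infty$, so taking the supremum gives $V^s(u,\Omega)\le|D^su|(\Omega)<\infty$; this also gives one half of the identity $V^s(u,A)\le|D^su|(A)$ for each open $A\subseteq\Omega$, after noting that test fields supported in $A$ are admissible for $V^s(u,A)$. For the reverse implication ``$\Rightarrow$'', assume $V^s(u,\Omega)<\infty$. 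Define the linear functional $L(\Psi)\coloneq -\int_{\R^n}u\,\div^s\Psi\,\de y$ on $C^1_c(\Omega;\R^n)$; this is well defined because $u\in L^p(\R^n)$ and $\div^s\Psi\in L^{p'}(\R^n)$ by Proposition~\ref{prop:gl-est1}, and it is bounded: for any $\Psi\in C^1_c(\Omega;\R^n)$, writing $\Psi = \|\Psi\|_{L^\infty(\Omega)}\,\tilde\Psi$ with $\|\tilde\Psi\|_{L^\infty(\Omega)}\le 1$, we get $|L(\Psi)|\le V^s(u,\Omega)\,\|\Psi\|_{L^\infty(\Omega)}$. Hence $L$ extends to a bounded linear functional on $C_0(\Omega;\R^n)$ by density of $C^1_c$, and the Riesz representation theorem yields a unique $D^su\in\mathcal M_b(\Omega;\R^n)$ with $L(\Psi)=\int_\Omega\Psi\cdot\de D^su$ for all $\Psi\in C_0(\Omega;\R^n)$, which is exactly~\eqref{eq:struct}, with $|D^su|(\Omega)=\|L\|=V^s(u,\Omega)$.

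For the localized identity $V^s(u,A)=|D^su|(A)$ on arbitrary open $A\subseteq\Omega$, the inequality $\le$ follows from the ``$\Leftarrow$'' computation applied with test fields in $C^1_c(A;\R^n)$. For $\ge$, I would use that $|D^su|(A)=\sup\{\int_A\Phi\cdot\de D^su : \Phi\in C_c(A;\R^n),\,\|\Phi\|_\infty\le 1\}$ (the standard variational characterization of total variation on open sets, e.g.\ as in~\cite{AFP00}), approximate a near-optimal continuous $\Phi$ by mollification with a smooth compactly supported field $\Psi\in C^1_c(A;\R^n)$ with $\|\Psi\|_{L^\infty}\le 1$ and $\int_A\Psi\cdot\de D^su$ close to $\int_A\Phi\cdot\de D^su$, and then invoke~\eqref{eq:struct} to rewrite $\int_A\Psi\cdot\de D^su=-\int_{\R^n}u\,\div^s\Psi$, which is admissible in the supremum defining $V^s(u,A)$. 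Letting the mollification parameter go to zero gives $V^s(u,A)\ge|D^su|(A)$.

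The only mild subtlety — and the step I would be most careful about — is the well-definedness and boundedness of the functional $L$ on all of $C_0(\Omega;\R^n)$: one must check that $\div^s\Psi$ pairs integrably against $u\in L^p(\R^n)$ even though $\div^s\Psi$ does not have compact support, which is why the $L^{p'}$-estimate~\eqref{eq:est-2} of Proposition~\ref{prop:gl-est1} is needed (together with $u\in L^p(\R^n)$), and that the density of $C^1_c(\Omega;\R^n)$ in $C_0(\Omega;\R^n)$ with respect to the $L^\infty$-norm is compatible with passing the estimate $|L(\Psi)|\le V^s(u,\Omega)\|\Psi\|_{L^\infty}$ to the closure. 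Everything else is a routine application of Riesz representation and of the standard fact that the total variation of a Radon measure on an open set is computed by testing against continuous compactly supported fields; indeed, the cited references~\cite[Theorem~3.2]{CS19} and~\cite[Section~1]{CS24} carry out precisely this argument for $\Omega=\R^n$ and in general, so the proof reduces to transcribing that construction.
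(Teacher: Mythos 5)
Your proof is correct and follows the same route as the paper, which does not give its own argument but cites~\cite[Theorem~3.2]{CS19} and~\cite[Section~1]{CS24}; those references carry out precisely this Riesz-representation argument (bounded linear functional on $C^1_c(\Omega;\R^n)$ extended to $C_0(\Omega;\R^n)$, well-definedness via the $L^{p'}$-bound on $\div^s\Psi$, and the variational characterization of total variation for the localized identity). You have in effect transcribed the standard proof, as you acknowledge at the end, and the details you flag as needing care (integrability of $u\,\div^s\Psi$, density of $C^1_c$ in $C_0$) are exactly the right ones and are handled correctly.
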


\begin{remark}
Clearly, a similar result holds true if $u\in L^p(\R^n)$ satisfies
\[
V^s(u,A)<\infty\quad\text{for all open sets $A\subset\subset\Omega$}.
\]
In this case, there exists a Radon measure $D^su\in\mathcal M(\Omega;\R^n)$ such that~\eqref{eq:struct} holds.
\end{remark}

From now on, with a slight abuse of notation, we use $|D^su|(\Omega)$ to denote the $s$-fractional variation of $u$ in $\Omega$. We conclude this subsection by recalling some properties of the $s$-fractional variation. The first one is a simple consequence of Propositions~\ref{prop:dual} and~\ref{prop:struct}. 

\begin{proposition}\label{prop:Ds-nablas}
Let $\Omega \subseteq\R^n$ be an open set, $s\in (0,1)$, $p\in [1,\infty]$, and $u\in L^p(\R^n)$. If $u$ has a weak $s$-fractional gradient $\nabla^su\in L^1(\Omega;\R^n)$, then $u$ has finite $s$-fractional variation in $\Omega$ and
\[
D^s u=\nabla^s u\,\mathcal L^n\quad\text{in $\mathcal M_b(\Omega;\R^n)$}.
\] 
\end{proposition}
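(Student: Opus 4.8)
The plan is to unwind the two relevant definitions and then invoke Proposition~\ref{prop:struct}. First I would start from the defining identity~\eqref{eq:weak-gradient} for the weak $s$-fractional gradient: since $\nabla^s u \in L^1(\Omega;\R^n)$ by hypothesis, for every $\Psi \in C^1_c(\Omega;\R^n)$ one has
\[
\left| \int_{\R^n} u(y)\,\div^s\Psi(y)\,\de y \right| = \left| \int_\Omega \nabla^s u(y)\cdot\Psi(y)\,\de y \right| \le \|\nabla^s u\|_{L^1(\Omega;\R^n)}\,\|\Psi\|_{L^\infty(\Omega)}.
\]
Taking the supremum over all such $\Psi$ with $\|\Psi\|_{L^\infty(\Omega)} \le 1$ gives $V^s(u,\Omega) \le \|\nabla^s u\|_{L^1(\Omega;\R^n)} < \infty$, so $u$ has finite $s$-fractional variation in $\Omega$.

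Next I would apply Proposition~\ref{prop:struct} to obtain the Radon measure $D^s u \in \mathcal M_b(\Omega;\R^n)$ satisfying~\eqref{eq:struct}. Combining~\eqref{eq:struct} with~\eqref{eq:weak-gradient} yields
\[
\int_\Omega \Psi(y)\cdot\de D^s u(y) = \int_\Omega \nabla^s u(y)\cdot\Psi(y)\,\de y \quad\text{for all }\Psi\in C^1_c(\Omega;\R^n),
\]
that is, the finite $\R^n$-valued measure $D^s u - \nabla^s u\,\mathcal L^n$ annihilates every test field in $C^1_c(\Omega;\R^n)$. By density of $C^1_c(\Omega;\R^n)$ in $C_c(\Omega;\R^n)$ (with respect to uniform convergence, component by component) and the Riesz representation theorem, this forces $D^s u - \nabla^s u\,\mathcal L^n = 0$, i.e.\ $D^s u = \nabla^s u\,\mathcal L^n$ in $\mathcal M_b(\Omega;\R^n)$, which is the claim.

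This argument is essentially bookkeeping with the definitions, so I do not expect a genuine obstacle; the only point requiring a word of justification is the final uniqueness step, namely that a finite Radon measure vanishing against all $C^1_c$ vector fields is the zero measure. This is standard and follows from the density just mentioned together with the fact that the total variation of a finite measure is computed as the supremum of its action on continuous compactly supported fields bounded by one; alternatively one may simply note that the argument producing $D^s u$ in~\cite[Theorem~3.2]{CS19} already delivers uniqueness.
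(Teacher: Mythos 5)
Your proof is correct and fleshes out exactly what the paper intends, since the paper only remarks that the result ``is a simple consequence of Propositions~\ref{prop:dual} and~\ref{prop:struct}'': you bound $V^s(u,\Omega)$ via the defining identity~\eqref{eq:weak-gradient}, invoke Proposition~\ref{prop:struct} to produce $D^su$, and identify it with $\nabla^su\,\mathcal L^n$ by the standard uniqueness argument. No gaps.
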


The second result, stated in~\cite[Proposition~3.12]{CS23-1}, allows to estimate the $s_0$-fractional variation of a function $u\in L^1(\R^n)$ in terms of its $s$-fractional variation for $s\in (s_0,1)$.

\begin{proposition}\label{prop:bvss0}
Let $s\in (0,1)$. Assume that $u\in L^1(\R^n)$ has finite $s$-fractional variation in $\R^n$. Then, for all $s_0\in(0,s)$, the function $u$ also has finite $s_0$-fractional variation in $\R^n$, and
\begin{align}\label{eq:Vss0}
|D^{s_0}u|(\R^n)\le\frac{s\mu_{1+s_0-s}\omega_n^\frac{s_0}{s}(n+2s_0-s)^\frac{s-s_0}{s}|D^s\mathbf{1}_{B_1}|(\R^n)^\frac{s-s_0}{s}}{s_0(n+s_0-s)(s-s_0)}\|u\|_{L^1(\R^n)}^\frac{s-s_0}{s}|D^su|(\R^n)^\frac{s_0}{s}.
\end{align}
\end{proposition}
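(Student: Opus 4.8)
The plan is to exploit the interpolation/representation structure of the $s$-fractional variation through Riesz potentials, together with the explicit comparison with a single test set $B_1$. First, recall that for $u$ smooth and compactly supported one has $\nabla^{s_0} u = \mathcal I^{1-s_0}\nabla u$ and $\nabla^{s} u = \mathcal I^{1-s}\nabla u$ (Proposition~\ref{prop:Riesz}); the semigroup property of Riesz potentials, $\mathcal I^{\alpha}\mathcal I^{\beta}=\mathcal I^{\alpha+\beta}$ for $\alpha+\beta<n$, then gives $\nabla^{s_0} u = \mathcal I^{s-s_0}\nabla^{s} u$, i.e. the $s_0$-gradient is obtained from the $s$-gradient by applying the (positive order) Riesz kernel $\mathcal I^{s-s_0}$, whose kernel is $c\,|z|^{-(n-s+s_0)}$. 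The idea is therefore to write, at least formally, $D^{s_0}u = \mathcal I^{s-s_0}(D^s u)$, i.e. $D^{s_0}u$ is the convolution of the finite measure $D^s u$ against the kernel $|z|^{-(n-s+s_0)}$ (divided by $\gamma_{s-s_0}$), and to estimate its total variation.

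The key quantitative step is that the convolution of a finite measure $\mu\in\mathcal M_b(\R^n;\R^n)$ with the kernel $k(z)=|z|^{-(n-s+s_0)}$ lands in $L^1$ only after a split: near the origin the kernel is integrable (since $n-s+s_0<n$), far from the origin it is controlled using the $L^1$-norm of $u$. Concretely I would split $\R^n = B_\rho \cup (\R^n\setminus B_\rho)$ for a radius $\rho>0$ to be optimized: on $B_\rho$ estimate $\|k\ast \mu\|_{L^1}\le \|k\mathbf 1_{B_\rho}\|_{L^1}|\mu|(\R^n)$, which produces a factor $\rho^{s-s_0}$ times $|D^s u|(\R^n)$; on the complement use that $|k(z)|\le \rho^{-(n-s+s_0)}$ there, but—and this is the crucial point for getting $\|u\|_{L^1}$ rather than $|D^s u|(\R^n)$ in this term—one should not pair the far part of the kernel with $D^s u$ directly but rather recognize that $D^{s_0}u$ restricted to far interactions can be re-expressed through $u$ itself via the definition of $\nabla^{s_0}$ (an integration-by-parts / duality argument moving the $\div^{s_0}$ onto the test field, exactly as in the proof of~\eqref{eq:V1s} in Proposition~\ref{prop:I1-sDu}). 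Balancing the two contributions, $\rho^{s-s_0}|D^s u|(\R^n)$ against $\rho^{-(n-s+s_0)}\cdot(\text{stuff})\cdot\|u\|_{L^1}$, via the choice $\rho \sim \big(\|u\|_{L^1}/|D^s u|(\R^n)\big)^{1/n}$ or similar, yields the stated interpolation inequality with exponents $\tfrac{s-s_0}{s}$ and $\tfrac{s_0}{s}$.

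The appearance of $|D^s\mathbf 1_{B_1}|(\R^n)$ on the right-hand side suggests that the authors' actual route is slightly different and cleaner: rather than optimizing a free radius, one compares directly with the indicator of the unit ball. Indeed $|D^s\mathbf 1_{B_1}|(\R^n)$ is a computable finite constant (it is $c_{n,s}\,\|\,|x|^{-(n+s)}\ast\mathbf 1_{B_1}\|$-type quantity), and by scaling one obtains $|D^s\mathbf 1_{B_r}|(\R^n) = r^{n-s}|D^s\mathbf 1_{B_1}|(\R^n)$. So an alternative plan is: dualize, i.e. for $\Psi\in C^1_c(\R^n;\R^n)$ with $\|\Psi\|_\infty\le 1$ write $\int u\,\div^{s_0}\Psi = \int u\,\div^{s_0}\Psi$ and use $\div^{s_0}\Psi = \mathcal I^{s-s_0}\div^{s}\Psi = c\,|z|^{-(n-s+s_0)}\ast \div^s\Psi$; split the kernel at radius $r$ as above; on $B_r$ estimate the $s$-part by $|D^s u|(\R^n)$ with a factor $\sup_{|z|\le r}$-integral giving $r^{s-s_0}$; on $B_r^c$ note that $\mathbf 1_{B_r^c}\,|z|^{-(n-s+s_0)}$ convolved with $\div^s\Psi$ is, up to constants, a difference of $s$-divergences of a mollified-by-$\mathbf 1_{B_r}$ field, and hence pairs against $u$ to give $\|u\|_{L^1}$ times a constant of order $r^{-s_0}$ (the scaling of $|D^s\mathbf 1_{B_r}|$ divided by $r^{n-s}$, i.e. $|D^s\mathbf 1_{B_1}|(\R^n)$, enters precisely here). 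Optimizing in $r$ gives the geometric-mean form of the bound, with all the $\Gamma$-factors packaged into $\mu_{1+s_0-s}$ and the combinatorial prefactor.

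The main obstacle I anticipate is \emph{justifying the formal identity $D^{s_0}u = \mathcal I^{s-s_0}(D^s u)$ at the level of measures}, together with the attendant convergence/well-definedness of the Riesz potential of the vector measure $D^s u$: one must check the integrability condition~\eqref{eq:Ia} for $\mu = D^s u$ and $\alpha = s-s_0$, i.e. $\int (1+|y|)^{-(n-s+s_0)}\,\de|D^s u|(y)<\infty$, which is not automatic from $|D^s u|(\R^n)<\infty$ alone but follows once one also invokes $u\in L^1$ and the structure of $D^s u$ (e.g. via~\eqref{eq:V1s} localized on annuli). Handling this carefully—likely by first proving the inequality for $u\in C^1_c$, where Proposition~\ref{prop:Riesz} and Fubini apply cleanly, and then passing to the general case by the approximation/density results for $BV^s$ from~\cite{CS19,CS23-1} together with lower semicontinuity of $V^{s_0}(\cdot,\R^n)$—is where the real work lies; the rest is a scaling-and-splitting computation.
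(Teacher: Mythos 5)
This proposition is quoted verbatim from~\cite[Proposition~3.12]{CS23-1} and is \emph{not} proved in this paper, so there is no in-house argument to compare your sketch against; what follows assesses the proposal on its own merits.

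Your skeleton --- invoke the semigroup identity $\nabla^{s_0}u=\mathcal I^{s-s_0}\nabla^s u$, dualize, split the Riesz kernel of order $s-s_0$ at a radius $\rho$, and balance the two contributions --- is the right one, and the near-field bound $\sim\rho^{s-s_0}|D^s u|(\R^n)$ is correct. But there is a genuine gap in the far-field term, and your first-paragraph balancing does not even produce the stated exponents. If the far part really scaled like $\rho^{-(n-s+s_0)}\|u\|_{L^1(\R^n)}$ as you write there, optimizing $\rho^{s-s_0}|D^su|(\R^n)+\rho^{-(n-s+s_0)}\|u\|_{L^1(\R^n)}$ would give $\rho\sim\big(\|u\|_{L^1}/|D^su|\big)^{1/n}$ and hence exponents $(s-s_0)/n$ and $(n-s+s_0)/n$; these equal the stated $(s-s_0)/s$ and $s_0/s$ only if $n=s$, which is impossible since $s<1\le n$. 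To recover the statement the far part must scale like $\rho^{-s_0}\|u\|_{L^1(\R^n)}$. You do assert this in your second paragraph, but purely heuristically: the claim that the tail kernel ``convolved with $\div^s\Psi$ is, up to constants, a difference of $s$-divergences of a mollified-by-$\mathbf 1_{B_r}$ field'' is exactly what needs to be proved, and it is not at all obvious. Note also that the two naive routes are closed off: the tail $K_>(z)=\gamma_{s-s_0}^{-1}|z|^{-(n-s+s_0)}\mathbf 1_{\{|z|>\rho\}}$ is not in $L^1(\R^n)$ (since $n-s+s_0<n$), so one cannot bound $\|K_>\ast D^su\|_{L^1}$ by $\|K_>\|_{L^1}|D^su|(\R^n)$; and pairing $\|K_>\|_{L^\infty}$ with $\|u\|_{L^1}$ would force the remaining factor to be $\|\div^s\Psi\|_{L^1(\R^n)}$, which is not controlled under $\|\Psi\|_{L^\infty}\le 1$ alone. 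So the step that actually produces the $\rho^{-s_0}$ decay and the factor $|D^s\mathbf 1_{B_1}|(\R^n)^{(s-s_0)/s}$ is the missing content of the proof. You correctly flag the second obstacle --- justifying $D^{s_0}u=\mathcal I^{s-s_0}D^su$ at the level of measures, which indeed should be handled by first proving the estimate for $u\in C^1_c(\R^n)$ and then approximating as in Proposition~\ref{prop:app} together with lower semicontinuity of $V^{s_0}(\,\cdot\,,\R^n)$ --- but that is the routine half; it is the far-field quantitative estimate that is really absent.
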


\begin{remark}\label{rem:unif}
Let us state the behavior of the constant appearing in~\eqref{eq:Vss0} as $s\to 1^-$, which plays a crucial role in the proof of the compactness result. Since $\mathbf{1}_{B_1}\in BV(\R^n)$, by~\cite[Theorem~4.9]{CS23-1} we have that $\mathbf{1}_{B_1}$ has finite $s$-fractional variation in $\R^n$ and
\[
\lim_{s\to 1^-}|D^s \mathbf{1}_{B_1}|(\R^n)=|D \mathbf{1}_{B_1}|(\R^n)=\mathcal H^{n-1}(\partial B_1)=\omega_n.
\]
Thus, for all $s_0\in (0,1)$, there holds
\[
\lim_{s\to 1^-}\frac{s\mu_{1+s_0-s}\omega_n^\frac{s_0}{s}(n+2s_0-s)^\frac{s-s_0}{s}|D^s\mathbf{1}_{B_1}|(\R^n)^\frac{s-s_0}{s}}{s_0(n+s_0-s)(s-s_0)}=
\frac{\mu_{s_0}\omega_n(n+2s_0-1)^{1-s_0}
}{s_0(n+s_0-1)(1-s_0)}\in(0,\infty).
\]
\end{remark}

We also recall the following result concerning the lower semicontinuity of the $s$-fractional variation as $s\to 1^-$.

\begin{proposition}\label{prop:lsc-Ds}
Let $\Omega\subseteq\R^n$ be an open set and let $(s_k)_k\subset (0,1)$ be such that $s_k\to 1$ as $k\to\infty$. Let $(u_k)_k\subset L^\infty(\R^n)$ and $u\in L^\infty(\Omega)$ satisfy
\begin{equation}\label{eq:L1-loc}
u_k\to u\quad\text{strongly in $L^1(\Omega)$ as $k\to\infty$},\qquad \sup_{k\in\N}\|u_k\|_{L^\infty(\R^n)}<\infty.
\end{equation}
Then,
\[
|Du|(\Omega)\le\liminf_{k\to\infty}|D^{s_k}u_k|(\Omega). 
\]
\end{proposition}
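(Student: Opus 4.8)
The plan is to establish the lower-semicontinuity estimate by testing the distributional derivative $Du$ against smooth vector fields and transferring the test to the fractional variation via the duality formula \eqref{eq:struct}. Fix $\Psi\in C^1_c(\Omega;\R^n)$ with $\|\Psi\|_{L^\infty(\Omega)}\le 1$. Since $u\in L^\infty(\Omega)$ and $u_k\to u$ strongly in $L^1(\Omega)$, the classical integration-by-parts identity gives
\begin{equation*}
\int_\Omega u(y)\,\div\Psi(y)\,\de y=\lim_{k\to\infty}\int_\Omega u_k(y)\,\div\Psi(y)\,\de y,
\end{equation*}
so it suffices to compare $\int_\Omega u_k\,\div\Psi\,\de y$ with $\int_{\R^n}u_k\,\div^{s_k}\Psi\,\de y = -\int_\Omega \Psi\cdot\de D^{s_k}u_k$. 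Because $\Psi$ has compact support in $\Omega$ and $\|\Psi\|_\infty\le1$, the latter is bounded in modulus by $|D^{s_k}u_k|(\Omega)$; hence if I can show
\begin{equation*}
\Big|\int_\Omega u_k(y)\big(\div\Psi(y)-\div^{s_k}\Psi(y)\big)\,\de y\Big|\to 0\quad\text{as }k\to\infty,
\end{equation*}
then $\int_\Omega u\,\div\Psi\,\de y\le\liminf_k|D^{s_k}u_k|(\Omega)$, and taking the supremum over admissible $\Psi$ yields $|Du|(\Omega)\le\liminf_k|D^{s_k}u_k|(\Omega)$.

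The core of the argument is therefore the convergence $\div^{s_k}\Psi\to\div\Psi$ as $s_k\to1$, strongly enough to kill the error against the $L^\infty$-bounded (hence $L^1_{\rm loc}$-bounded on the support) sequence $u_k$. The natural route is Proposition~\ref{prop:Riesz}: $\div^{s_k}\Psi=\mathcal I^{1-s_k}\div\Psi$ on all of $\R^n$. For the fixed test function $g\coloneq\div\Psi\in C_c(\R^n)$, it is classical that $\mathcal I^{\alpha}g\to g$ uniformly on $\R^n$ as $\alpha\to0^+$: indeed, writing $\mathcal I^{\alpha}g(x)=\frac{1}{\gamma_\alpha}\int_{\R^n}\frac{g(y)}{|y-x|^{n-\alpha}}\,\de y$ and using $\alpha\gamma_\alpha\to\omega_n$ from \eqref{eq:gammaa}, one splits the integral over $|y-x|\le\delta$ and $|y-x|>\delta$; the near part is controlled by the modulus of continuity of $g$ times $\frac{\alpha}{\gamma_\alpha}\int_{|z|\le\delta}|z|^{\alpha-n}\,\de z=\frac{\alpha\omega_n\delta^\alpha}{\alpha\gamma_\alpha}\to\delta^\alpha\le1$-type bounds, and the far part decays because $\frac{1}{\gamma_\alpha}\sim\frac{\alpha}{\omega_n}\to0$ while $\int_{|y-x|>\delta}|y-x|^{\alpha-n}\,\de y$ stays controlled on the compact support of $g$. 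Since the $u_k$ are supported (effectively, for the purpose of integrating against the compactly supported $\div\Psi-\div^{s_k}\Psi$) where $\div\Psi$ lives after accounting for the global nature of $\div^{s_k}\Psi$, one must be slightly careful: $\div^{s_k}\Psi$ does not have compact support, so I will instead bound the error as $\|\div^{s_k}\Psi-\div\Psi\|_{L^1(\R^n)}\sup_k\|u_k\|_{L^\infty(\R^n)}$, using that $\mathcal I^{1-s_k}g\to g$ in $L^1(\R^n)$ for $g\in C_c(\R^n)$ (the far-field tail of $\mathcal I^{1-s_k}g$ is $O(\frac{1-s_k}{|x|^{n-(1-s_k)}})$, which is integrable near infinity for $s_k$ close to $1$ and has vanishing $L^1$-mass). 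Alternatively, and perhaps more cleanly, one may invoke Proposition~\ref{prop:gl-est1}: $\|\div^{s_k}\Psi\|_{L^p}\le\frac{2\omega_n\mu_{s_k}}{s_k(1-s_k)2^{s_k}}\|\Psi\|_{L^p}^{1-s_k}\|\nabla\Psi\|_{L^p}^{s_k}$, whose prefactor stays bounded as $s_k\to1$ by \eqref{eq:mus}, giving equiboundedness, and then combine with pointwise convergence $\div^{s_k}\Psi\to\div\Psi$ plus dominated convergence on a large ball containing $\supp\Psi$, handling the complement via the explicit tail estimate.

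The main obstacle I anticipate is precisely this last point: $\div^{s_k}\Psi$ is not compactly supported, so one cannot naively integrate against $u_k$ on all of $\R^n$ and pass to the limit by dominated convergence — one needs a uniform-in-$k$ control of the tail $\int_{\R^n\setminus B_R}|u_k|\,|\div^{s_k}\Psi|\,\de y$, which requires knowing that $\mathcal I^{1-s_k}(\div\Psi)$ has small $L^1$-norm outside a large ball uniformly in $k$. This is true because, for $|x|$ large compared to $\diam(\supp\Psi)$, $|\div^{s_k}\Psi(x)|=|\mathcal I^{1-s_k}(\div\Psi)(x)|\le\frac{C(1-s_k)}{|x|^{n-1+s_k}}\|\div\Psi\|_{L^1}$, and $\int_{|x|>R}|x|^{-(n-1+s_k)}\,\de x=\frac{\omega_n}{1-s_k}R^{-(1-s_k)}$, so the product is $\le C R^{-(1-s_k)}\le C$ uniformly, and in fact one gets smallness by combining with the $(1-s_k)$ factor more carefully or by an outright $L^1$-convergence statement $\mathcal I^{1-s_k}g\to g$ in $L^1(\R^n)$. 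A slightly more robust alternative, avoiding delicate tail estimates, is to approximate $u$ in $L^1(\Omega)$ by $u\,\mathbf{1}_{B_R}$, use a diagonal argument, and exploit that only the behavior of $\Psi$ on its (compact) support matters while the contributions from far away are uniformly negligible; once the tail issue is dispatched, everything else is a routine application of \eqref{eq:struct}, Proposition~\ref{prop:Riesz}, and the classical fact that Riesz potentials of order tending to zero converge to the identity on continuous compactly supported functions.
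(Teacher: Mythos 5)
Your overall strategy is the same one the paper relies on (via the reference it cites): test against $\Psi\in C^1_c(\Omega;\R^n)$ with $\|\Psi\|_{L^\infty}\le1$, use the duality~\eqref{eq:struct}, and reduce everything to $\|\div^{s_k}\Psi-\div\Psi\|_{L^1(\R^n)}\to 0$, which together with $\sup_k\|u_k\|_{L^\infty(\R^n)}<\infty$ and $u_k\to u$ in $L^1(\Omega)$ closes the argument by taking the supremum over $\Psi$. So the plan is right.

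However, the way you try to establish the crucial $L^1(\R^n)$-convergence of $\div^{s_k}\Psi$ has a genuine gap. First, the far-field bound $|\div^{s_k}\Psi(x)|=|\mathcal I^{1-s_k}(\div\Psi)(x)|\le C(1-s_k)|x|^{-(n-1+s_k)}\|\div\Psi\|_{L^1}$, while true as a pointwise estimate for $|x|$ large, does \emph{not} give an integrable tail: in $\R^n$ one has $\int_{|x|>R}|x|^{-(n-1+s_k)}\,\de x=\omega_n\int_R^\infty r^{-s_k}\,\de r=+\infty$ for every $s_k\in(0,1)$, not $\frac{\omega_n}{1-s_k}R^{-(1-s_k)}$ (your formula is the integral over $|x|<R$, not $|x|>R$). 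So the crude Riesz-potential estimate simply does not prove that $\div^{s_k}\Psi\in L^1(\R^n)$, let alone that the tail vanishes. Second, the claim ``$\mathcal I^{1-s_k}g\to g$ in $L^1(\R^n)$ for $g\in C_c(\R^n)$'' is false in general: if $\int_{\R^n}g\ne 0$, then $\mathcal I^{\alpha}g(x)\sim c\,\alpha\,|x|^{-(n-\alpha)}\int g$ as $|x|\to\infty$, which is not in $L^1(\R^n)$ for any $\alpha\in(0,1)$. The statement only becomes true for $g=\div\Psi$ because $\int_{\R^n}\div\Psi=0$; this cancellation is precisely what you are implicitly using but never invoking.

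The correct fix is to avoid the Riesz-potential representation in the tail and instead estimate directly from the definition: for $\supp\Psi\subset B_\rho$ and $|x|>2\rho$ one has $\Psi(x)=0$, hence
\[
|\div^{s_k}\Psi(x)|=\mu_{s_k}\left|\int_{B_\rho}\frac{\Psi(y)\cdot(y-x)}{|y-x|^{n+s_k+1}}\,\de y\right|\le \frac{2^{n+s_k}\mu_{s_k}}{|x|^{n+s_k}}\|\Psi\|_{L^1(\R^n)} ,
\]
which \emph{is} integrable over $\{|x|>2\rho\}$, with $\int_{|x|>2\rho}|x|^{-(n+s_k)}\de x=\frac{\omega_n}{s_k}(2\rho)^{-s_k}$ bounded and $\mu_{s_k}\to 0$ by~\eqref{eq:mus}, so the tail actually vanishes. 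Combined with uniform convergence of $\mathcal I^{1-s_k}(\div\Psi)\to\div\Psi$ on a large ball (which you derive correctly), this gives $\|\div^{s_k}\Psi-\div\Psi\|_{L^1(\R^n)}\to 0$. Alternatively one can simply cite the $L^1$-convergence statement already used in the proof of Lemma~\ref{lem:comp2} (namely~\cite[Proposition~4.4]{CS23-1}), which is exactly what the paper does by referring to~\cite[Theorem~4.13(i)]{CS23-1} and observing that only the values of $u$ in $\Omega$ matter for $|Du|(\Omega)$.
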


\begin{proof}
The proof follows the one of~\cite[Theorem~4.13(i)]{CS23-1}, where the authors assume that
\[
u_k\to u\quad\text{strongly in $L^1_{\rm loc}(\R^n)$ as $k\to\infty$}, \qquad \sup_{k\in\N}\|u_k\|_{L^\infty(\R^n)}<\infty.
\]
However, a closer inspection of their proof, together with the fact that $|Du|(\Omega)$ depends only on the values of $u$ in $\Omega$, shows that the same argument applies under the weaker assumption~\eqref{eq:L1-loc}.
\end{proof}

We conclude with the following approximation result for functions $u\in L^\infty(\R^n)$ with finite $s$-fractional variation. The proof is similar to that of~\cite[Theorems~3.7 and~3.8]{CS19}, and is postponed to Appendix~\ref{app:a}.

\begin{proposition}\label{prop:app}
Let $\Omega\subseteq\R^n$ be an open set and $s\in (0,1)$. Assume that $u\in L^\infty(\R^n)$ satisfies
\[
|D^su|(A)<\infty\quad\text{for all open sets $A\subset\subset\Omega$}.
\]
Then, there exists a sequence $(u_k)_k\subset C_c^\infty(\R^n)$ such that 
\begin{align*}
&u_k\to u& &\text{strongly in $L^1_{\rm loc}(\R^n)$ for all $p\in [1,\infty)$ as $k\to\infty$},\\
&D^s u_k=\nabla^s u_k\,\mathcal L^n\rightharpoonup D^su& &\text{weakly* in $\mathcal M(\Omega;\R^n)$ as $k\to\infty$}.
\end{align*}
\end{proposition}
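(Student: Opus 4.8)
The plan is to prove Proposition~\ref{prop:app} by a mollification argument, following the classical scheme for $BV$-type approximation but tracking carefully the integrability at infinity required for the fractional operators to make sense. Fix a standard mollifier $\rho\in C^\infty_c(B_1)$ with $\rho\ge 0$ and $\int_{\R^n}\rho=1$, and set $\rho_\varepsilon(x)\coloneqq\varepsilon^{-n}\rho(x/\varepsilon)$. First I would define $u_\varepsilon\coloneqq u*\rho_\varepsilon\in C^\infty(\R^n)$; since $u\in L^\infty(\R^n)$ we immediately get $u_\varepsilon\in L^\infty(\R^n)$ with $\|u_\varepsilon\|_{L^\infty(\R^n)}\le\|u\|_{L^\infty(\R^n)}$, and $u_\varepsilon\to u$ strongly in $L^1_{\rm loc}(\R^n)$ (indeed in $L^p_{\rm loc}$ for every $p\in[1,\infty)$, by standard properties of mollification of $L^\infty$ functions). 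The key point is that $u_\varepsilon$ has a weak $s$-fractional gradient on the whole of $\R^n$: using the fractional integration by parts formula~\eqref{eq:dual}, Fubini's theorem, and the fact that convolution commutes with $\div^s$ on test functions, one shows for every $\Psi\in C^1_c(\R^n;\R^n)$ that
\begin{equation*}
\int_{\R^n}u_\varepsilon\,\div^s\Psi\,\de y=\int_{\R^n}u\,\div^s(\Psi*\check\rho_\varepsilon)\,\de y=-\int_{\R^n}\Psi*\check\rho_\varepsilon\cdot\de D^su_{\rm aux},
\end{equation*}
so that $D^su_\varepsilon=(D^su)*\rho_\varepsilon$ on any fixed open set; since $u_\varepsilon$ is smooth, Proposition~\ref{prop:Ds-nablas} gives $D^su_\varepsilon=\nabla^su_\varepsilon\,\mathcal L^n$. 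One has to be a little careful that these manipulations are legitimate even though $\div^s\Psi$ does not have compact support: this is where the $L^\infty$-assumption on $u$ together with the decay $|\div^s\Psi(x)|\lesssim|x|^{-n-s}$ at infinity is used to justify Fubini.

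Next I would address the weak-* convergence $\nabla^su_\varepsilon\,\mathcal L^n\rightharpoonup D^su$ in $\mathcal M(\Omega;\R^n)$. Fix an open set $A\subset\subset\Omega$; for $\varepsilon$ small enough (say $\varepsilon<\dist(A,\partial\Omega)$) the mollification $D^su_\varepsilon=(D^su)*\rho_\varepsilon$ makes sense as a measure on $A$ coming from $D^su$ restricted to a slightly larger compactly contained open set where $|D^su|$ is finite by hypothesis. Then for $\varphi\in C_c(A)$ one computes $\int_A\varphi\cdot\nabla^su_\varepsilon\,\de y=\int_A(\varphi*\rho_\varepsilon)\cdot\de D^su\to\int_A\varphi\cdot\de D^su$ as $\varepsilon\to0$, using $\varphi*\rho_\varepsilon\to\varphi$ uniformly with supports in a fixed compact subset of $\Omega$. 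Covering $\Omega$ by an increasing sequence of such $A$'s and extracting a diagonal sequence $u_k\coloneqq u_{\varepsilon_k}$ with $\varepsilon_k\to0$ then yields the stated weak-* convergence on all of $\Omega$ together with the $L^1_{\rm loc}$ convergence.

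Finally, to get $u_k\in C^\infty_c(\R^n)$ rather than merely $C^\infty(\R^n)$, I would truncate: pick cut-off functions $\eta_k\in C^\infty_c(\R^n)$ with $\eta_k=1$ on $B_k$, $0\le\eta_k\le1$, $\|\nabla\eta_k\|_{L^\infty}\le C/k$ and $\supp\eta_k\subset B_{2k}$, and replace $u_k$ by $\eta_ku_{\varepsilon_k}$. The functions $\eta_ku_{\varepsilon_k}$ are in $C^\infty_c(\R^n)$, still converge to $u$ in $L^1_{\rm loc}(\R^n)$, and still have $D^s(\eta_ku_{\varepsilon_k})=\nabla^s(\eta_ku_{\varepsilon_k})\,\mathcal L^n$ by Proposition~\ref{prop:Ds-nablas}. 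The Leibniz rule~\eqref{eq:leibniz} gives $\nabla^s(\eta_ku_{\varepsilon_k})=\eta_k\nabla^su_{\varepsilon_k}+u_{\varepsilon_k}\nabla^s\eta_k+\nabla^s_{\rm NL}(\eta_k,u_{\varepsilon_k})$, and on any fixed $A\subset\subset\Omega$ we have $\eta_k=1$ for $k$ large, so on $A$ the correction terms $u_{\varepsilon_k}\nabla^s\eta_k+\nabla^s_{\rm NL}(\eta_k,u_{\varepsilon_k})$ vanish identically for $k$ large (their defining integrals only see $\eta_k(y)-\eta_k(x)=0$ when both $x,y$ lie where $\eta_k\equiv1$ — hence one must instead estimate, via Proposition~\ref{prop:gl-est1} and Proposition~\ref{prop:gl-est2} together with $\|\nabla\eta_k\|_{L^\infty}\le C/k$ and $\|u_{\varepsilon_k}\|_{L^\infty}\le\|u\|_{L^\infty}$, that these terms tend to $0$ in $L^1(A)$). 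Combining with the convergence already established for $\nabla^su_{\varepsilon_k}\,\mathcal L^n$ on $\Omega$ finishes the proof.

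I expect the main obstacle to be the careful justification of the identity $D^su_\varepsilon=(D^su)*\rho_\varepsilon$ and of the truncation step: in both places one is manipulating nonlocal operators whose kernels do not have compact support, so the interchange of integrals and the claim that the error terms from the Leibniz rule are negligible must be controlled quantitatively using the $L^p$-estimates of Proposition~\ref{prop:gl-est1} and Proposition~\ref{prop:gl-est2}, the $L^\infty$-bound on $u$, and the decay $\|\nabla\eta_k\|_{L^\infty}\to0$; the rest of the argument is the routine mollification scheme.
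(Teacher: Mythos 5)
Your proposal follows the same two-step scheme as the paper's proof: mollify to produce $C^\infty\cap L^\infty$ approximants $u_\varepsilon$ with the pointwise Riesz formula (so that $D^su_\varepsilon=\nabla^su_\varepsilon\,\mathcal L^n$), show $D^su_\varepsilon\rightharpoonup D^su$ on $\Omega$ by moving the mollifier onto the test function, then truncate by cutoffs and control the Leibniz error, concluding by a diagonal argument. The only substantive divergence is in the truncation step. The paper uses cutoffs $\eta_R$ with $\|\nabla\eta_R\|_{L^\infty}\le 2$ (not vanishing); it does not claim the error terms go to zero, but instead establishes a uniform local bound $|D^su_R|(B_r)\le C_r$, extracts a weak-* limit $\mu\in\mathcal M(\R^n;\R^n)$, and identifies $\mu=D^su$ via the duality $-\lim_R\int u\eta_R\div^s\Psi=-\int u\div^s\Psi$. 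Your variant shrinks $\|\nabla\eta_k\|_{L^\infty}\le C/k\to0$ so the Leibniz corrections tend to $0$ in $L^1_{\rm loc}$ directly; this is valid and arguably more explicit, avoiding the weak-* compactness and identification step.

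A couple of small cautions worth fixing: you cite~\eqref{eq:leibniz} (Proposition~\ref{prop:nonlocal-leibniz}) and Proposition~\ref{prop:gl-est2}, but both are stated for pairs in $C^1_c(\R^n)$, whereas $u_{\varepsilon_k}$ lies only in $C^\infty(\R^n)\cap L^\infty(\R^n)$ and is not compactly supported. The right reference for the Leibniz identity is Lemma~\ref{lem:leibniz2}, and the pointwise estimate on $\nabla^s_{\rm NL}(\eta_k,u_{\varepsilon_k})$ should be verified directly --- it goes through because the bound only uses $\|u_{\varepsilon_k}\|_{L^\infty}$ together with the splitting of the kernel integral, never the compact support of $u_{\varepsilon_k}$. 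In fact, as the paper's Step~2 exploits, there is a cheaper local argument: for $x$ in a fixed compact $A$ and $k$ large enough that $\eta_k\equiv1$ on a large ball containing $A$, the integrals defining $\nabla^s\eta_k(x)$ and $\nabla^s_{\rm NL}(\eta_k,u_{\varepsilon_k})(x)$ only see the region far from $A$ where $|y-x|^{-n-s}$ is integrable, giving a bound of order $\dist(A,\supp\nabla\eta_k)^{-s}\to0$ without any decay assumption on $\|\nabla\eta_k\|_{L^\infty}$ beyond boundedness.
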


\subsection{The functional setting} 
In this subsection, we introduce the functional framework used from Section~\ref{sec:main} on. Let $T\subset\R$ be a collection of $M$ distinct ordered real numbers, that is
\[
T\coloneqq\{c_1,\dots,c_M\}\subset\R\quad\text{with $c_1<c_2<\cdots<c_M$}.
\]
 We recall the definition of the space of $T$-valued functions of bounded variation, see e.g.~\cite{AB90-1,AB90-2}.

\begin{definition}
Let $\Omega\subseteq\R^n$ be an open set. We define 
\begin{align*}
BV(\Omega;T)&\coloneqq \{u\colon\Omega\to T\text{ measurable}: Du\in\mathcal M_b(\Omega;\R^n)\}.
\end{align*}
\end{definition}

We recall some properties of the space $BV(\Omega;T)$ that will be used throughout the paper. For further details, we refer the reader to the monograph~\cite{AFP00}. Let $u\in BV(\Omega;T)$ be fixed. For each $i\in\{1,\dots,M\}$, the set $E_i\coloneq\{x\in\Omega:u(x)= c_i \}$ has finite perimeter in $\Omega$, and we can write
\[
u=\sum_{i=1}^M c_i \mathbf{1}_{E_i}\quad\text{a.e.\ in $\Omega$}.
\]
The jump set of $u$ is defined as
\[
S_u\coloneqq \bigcup_{i=1}^M\partial^* E_i\cap \Omega,
\]
where $\partial^*$ denotes the reduced boundary of a set of finite perimeter. For $\mathcal H^{n-1}$-a.e.\ $x\in S_u$ there exist two different indices $i,j\in\{1,\dots,M\}$ and $\nu\in\mathbb S^{n-1}$ such that
\begin{align*}
&\lim_{r\to 0^+}\frac{\mathcal L^n(B_r(x)\cap E_i)}{\mathcal L^n(B_r(x))}=\lim_{r\to 0^+}\frac{\mathcal L^n(B_r(x)\cap E_j)}{\mathcal L^n(B_r(x))}=\frac{1}{2},\\
&\lim_{r\to 0^+}\frac{\mathcal L^n(\{y\in B_r(x)\cap (\R^n\setminus E_i):(y-x)\cdot \nu>0\})}{\mathcal L^n(B_r(x))}=0,\\
&\lim_{r\to 0^+} \frac{\mathcal L^n(\{y\in B_r(x)\cap (\R^n\setminus E_j):(y-x)\cdot \nu<0\})}{\mathcal L^n(B_r(x))}=0.
\end{align*}
The triplet 
\[
(u^+(x),u^-(x),\nu_u(x)) \coloneqq (c_i,c_j,\nu) 
\]
is uniquely determined for $\mathcal H^{n-1}$-a.e.\ $x\in S_u$, up to a change of sign of $\nu_u(x)$ and an interchange of $u^+(x)$ and $u^-(x)$.

When $\Omega \subset\R^n$ is a bounded open set with a Lipschitz boundary, by~\cite[Lemma~3.3]{AB90-2}, we can derive the following extension property for the space $BV(\Omega;T)$.

\begin{proposition}\label{prop:extension}
Let $\Omega\subset\R^n$ be a bounded open set with Lipschitz boundary and $u\in BV(\Omega;T)$. Then, there exists a function $v\in BV(\R^n;T)$ such that
\[
v=u\quad\text{a.e.\ in $\Omega$},\qquad |Dv|(\partial\Omega)=0.
\]
\end{proposition}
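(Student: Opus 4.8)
The plan is to reduce the statement to the classical $BV$ extension result for sets of finite perimeter. Writing $u=\sum_{i=1}^M c_i\mathbf{1}_{E_i}$ with $E_i=\{u=c_i\}$, the condition $u\in BV(\Omega;T)$ is equivalent to each $E_i$ having finite perimeter in $\Omega$. The quantity $|Dv|(\partial\Omega)$ we want to annihilate decomposes (up to constants $|c_i-c_j|$) into boundary perimeters of the extended sets, so it suffices to extend each $E_i$ to a set of finite perimeter in $\R^n$ whose reduced boundary charges $\partial\Omega$ with zero $\mathcal H^{n-1}$-measure, in a way that the extensions still partition $\R^n$. This is exactly the content of~\cite[Lemma~3.3]{AB90-2}, so the proof is essentially a citation plus the bookkeeping to pass from the partition statement to the function statement.

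First I would recall from~\cite[Lemma~3.3]{AB90-2} that, since $\partial\Omega$ is Lipschitz and $\mathcal H^{n-1}(\partial\Omega)<\infty$, one can find for each $i$ a set $F_i\subset\R^n$ of finite perimeter with $F_i\cap\Omega=E_i$ (up to $\mathcal L^n$-null sets), such that $\{F_i\}_{i=1}^M$ is (essentially) a partition of $\R^n$ and $\mathcal H^{n-1}(\partial^*F_i\cap\partial\Omega)=0$ for every $i$. Concretely, one reflects across the Lipschitz boundary locally and glues, using that a Lipschitz graph has $\mathcal H^{n-1}$-negligible intersection with the reduced boundary of the reflected sets; the partition property is preserved because reflection is a bijection. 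Then I would set $v\coloneqq\sum_{i=1}^M c_i\mathbf{1}_{F_i}$, which is well-defined as a $T$-valued function since the $F_i$ partition $\R^n$ up to a null set, and satisfies $v=u$ a.e.\ in $\Omega$ by construction.

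It remains to check $v\in BV(\R^n;T)$ with $|Dv|(\partial\Omega)=0$. For the first point, $|Dv|(\R^n)\le\tfrac12\sum_{i=1}^M |c_i-c_{\mathrm{nbr}}|\,\mathrm{Per}(F_i;\R^n)$ — more precisely, writing $v$ in terms of level sets $\{v\ge c_i\}$, which are finite unions of the $F_i$ and hence of finite perimeter, gives $|Dv|(\R^n)=\sum_{i=2}^M (c_i-c_{i-1})\,\mathrm{Per}(\{v\ge c_i\};\R^n)<\infty$ via the coarea formula for $BV$. For the second point, $S_v\subseteq\bigcup_i\partial^*F_i$, and since $\mathcal H^{n-1}(\partial^*F_i\cap\partial\Omega)=0$ for each $i$ we get $\mathcal H^{n-1}(S_v\cap\partial\Omega)=0$; as $|Dv|$ is concentrated on $S_v$ (being a $BV$ function with values in a finite set, its derivative is purely jump) and absolutely continuous with respect to $\mathcal H^{n-1}\restriction S_v$, we conclude $|Dv|(\partial\Omega)=0$.

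The only genuine obstacle is the construction of the extensions $F_i$ with the boundary-charging property while keeping them a partition, but this is precisely what~\cite[Lemma~3.3]{AB90-2} provides, so I would simply invoke it; everything else is the standard dictionary between $BV(\cdot;T)$ functions and Caccioppoli partitions as recorded in~\cite{AFP00,AB90-2}. If one wanted a self-contained argument, the delicate point would be verifying $\mathcal H^{n-1}(\partial^*F_i\cap\partial\Omega)=0$, which follows from the fact that the Lipschitz boundary is rectifiable and the local reflection map sends the interior trace of $E_i$ to the exterior, so no new reduced boundary is created along $\partial\Omega$ apart from an $\mathcal H^{n-1}$-null set.
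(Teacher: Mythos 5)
Your proposal is correct and follows essentially the same route as the paper: the paper does not give a detailed proof of Proposition~\ref{prop:extension} but simply records that it follows from~\cite[Lemma~3.3]{AB90-2}, which is exactly the extension lemma for Caccioppoli partitions across a Lipschitz boundary that you invoke; the remaining steps (defining $v=\sum_i c_i\mathbf{1}_{F_i}$, checking $v\in BV(\R^n;T)$ via the coarea/level-set decomposition, and deducing $|Dv|(\partial\Omega)=0$ from $\mathcal H^{n-1}(\partial^*F_i\cap\partial\Omega)=0$ because $Dv$ is purely a jump measure concentrated on $S_v\subset\bigcup_i\partial^*F_i$) are precisely the standard bookkeeping the paper leaves implicit.
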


Finally, we define the space of $T$-valued functions with finite $s$-fractional variation in $\Omega$.

\begin{definition}\label{def:BVs}
Let $\Omega\subseteq\R^n$ be an open set and $s\in (0,1)$. We define
\begin{align*}
BV^s(\Omega;T)&\coloneqq \{u\colon\R^n\to T\text{ measurable}: V^s(u,\Omega)<\infty\}.
\end{align*} 
\end{definition}

We point out that every function $u\in BV^s(\Omega;T)$ is defined on the whole of $\R^n$, and that the set $\Omega$ refers only to the domain where the $s$-fractional variation of $u$ is finite.

\section{Main result}\label{sec:main}

In this section, we state the main result of the paper, which is Theorem~\ref{thm:main}. We begin by introducing the class of densities under consideration. Let us fix two constants $0<\lambda\le\Lambda<\infty$. We define $\mathcal G(\lambda, \Lambda)$ as the class of functions $\psi\colon\R^n\times\R^n\to[0,\infty)$ satisfying:
\begin{itemize}
\item[(i)] $\psi$ is continuous on $\R^n \times\R^n$;
\item[(ii)] for every $x\in\R^n$, the map $\xi \mapsto \psi(x,\xi)$ is positively 1-homogeneous and convex;
\item[(iii)] $\lambda|\xi|\le\psi(x,\xi)\le\Lambda|\xi|$ for all $(x,\xi)\in\R^n\times\R^n$.
\end{itemize} 

Let $(\psi_k)_k\subset\mathcal G(\lambda,\Lambda)$ be a fixed sequence. We require the following \emph{uniform approximation condition} for the sequence $(\psi_k)_k$:

\begin{enumerate}[label=(\Alph*), ref=\Alph*]
\item \label{A} For every $\delta>0$, there exists a natural number $N\in\N$, depending only on $\delta$, such that for all $k\in\N$ there exist two families of functions $(b_k^i)_{i=1}^N,(\varphi_k^i)_{i=1}^N$ with the following properties:
\begin{itemize}
\item[(A1)] for all $i\in\{1,\dots,N\}$, the functions $b_k^i\colon\R^n\to[0,\infty)$ are uniformly continuous, bounded, and satisfy
\[
\sup_{k\in\N}\|b_k^i\|_{L^\infty(\R^n)}<\infty;
\]
\item[(A2)] for all $i\in\{1,\dots,N\}$, the functions $\varphi_k^i\colon\R^n\to[0,\infty)$ are positively 1-homogeneous, convex, and satisfy
\[
\sup_{k\in\N}\|\varphi_k^i\|_{C(\mathbb S^{n-1})}<\infty;
\]
\item[(A3)] we have
\[
\left|\psi_k(x,\nu)-\sum_{i=1}^Nb_k^i(x)\varphi_k^i(\nu)\right|\le\delta\quad\text{for all $(x,\nu)\in\R^n\times\mathbb S^{n-1}$ and for all $k\in\N$.}
\]
\end{itemize}
\end{enumerate}
Note that the functions $b_k^i$ and $\varphi_k^i$ in general depend on $\delta$ which is omitted in the notation. We also say that a function $\psi\in\mathcal G(\lambda, \Lambda)$ satisfies the uniform approximation condition~\eqref{A} if the constant sequence $\psi_k\coloneq \psi$, $k \in \mathbb{N}$, satisfies the uniform approximation condition~\eqref{A}. A prototype of a function $\psi\in\mathcal G(\lambda,\Lambda)$ satisfying condition~\eqref{A} is given by
\[
\psi(x,\xi) \coloneqq \sum_{i=1}^N b^i(x)\varphi^i(\xi)\quad \text{for all $(x,\xi)\in\R^n\times\R^n$},
\]
where $b^i\colon\R^n\to[0,\infty)$ are bounded and uniformly continuous, and $\varphi^i\colon\R^n\to[0,\infty)$ are convex and positively 1-homogeneous. We refer the reader to Remark~\ref{rem:A} below for a more detailed discussion of condition~\eqref{A}.

We further fix a sequence $(s_k)_k \subset (0,1)$ such that $s_k\to 1$ as $k\to\infty$. We assume that the sequences $(\psi_k)_k$ and $(s_k)_k$ satisfy the following \emph{compatibility condition}. For given $\delta >0$, let $(b_k^i)_{i=1}^N$, $k\in\N$, be the family of functions given by the uniform approximation condition~\eqref{A}, and let $r_{b_k^i}$ denote the radius of uniform continuity of $b_k^i$ defined in~\eqref{eq:rb}. We assume that
\begin{equation} \label{eq:compatibility}
(1-s_k)\log(r_{b_k^i}(\eta))\to 0\quad\text{as $k\to\infty$ for all $i\in\{1,\dots,N\}$, $\eta>0$, and $\delta >0$.}
\end{equation}
Clearly, this condition is satisfied for a constant sequence satisfying the uniform approximation condition~\eqref{A}. Further clarification on the compatibility condition~\eqref{eq:compatibility} can be found in Remark~\ref{rem:compatibility} below.

Let $\Omega\subset\R^n$ be an open set. For all $s\in (0,1)$ and $u\in L^1(\Omega)$, we define the (possibly empty) set of all extensions $v\in BV^s(\Omega;T)$ of $u$ as
\[
\mathcal A^s(u,\Omega) \coloneqq \left\{v\in BV^s(\Omega;T):v=u\text{ a.e.\ in $\Omega$}\right\}
\]
(recall that a function in $BV^s(\Omega;T)$, introduced in Definition~\ref{def:BVs}, is defined on the whole space $\R^n$). For all $k\in\N$, we consider the functional $\mathcal F_k \colon L^1(\Omega)\to [0,\infty]$, defined as
\begin{equation}\label{eq:Fk}
\mathcal F_k(u) \coloneqq 
\begin{cases}
\displaystyle\inf_{v\in\mathcal A^{s_k}(u,\Omega)}\int_\Omega \psi_k\left(y,\frac{\de D^{s_k}v}{\de |D^{s_k}v|}(y)\right)\de |D^{s_k}v|(y)
&\text{if $\mathcal A^{s_k}(u,\Omega)\neq\emptyset$},\\
\displaystyle\infty&\text{otherwise}.
\end{cases}
\end{equation}
Our goal is to study the $\Gamma$-limit of $(\mathcal F_k)_k$ with respect to the strong topology of $L^1(\Omega)$.

To this end, for all $x\in\R^n$, $ i,j\in\{1,\dots,M\} $, $\nu\in \mathbb S^{n-1}$, $r > 0$, and $k\in\N$ we consider the minimization problem 
\begin{align*}
&m_k( c_i,c_j, Q_r^\nu(x)) \\
&\coloneqq \inf\left\{
\int_{Q_r^\nu(x)}\psi_k\left(y,\frac{\de Du}{\de |Du|}(y)\right)\de|Du|(y):\text{$u\in BV(Q_r^\nu(x);T)$, $u=u_{i,j}^{x,\nu}$ on $\partial Q_r^\nu(x)$}\right\},
\end{align*}
where the function $u_{i,j}^{x,\nu}\colon\R^n\to T$ is defined as
\begin{equation}\label{eq:uij}
u_{i,j}^{x,\nu}(y)\coloneq
\begin{cases}
 c_i &\text{if $(y-x)\cdot \nu>0$},\\
 c_j &\text{if $(y-x)\cdot \nu<0$}.
\end{cases}
\end{equation}
Here, and in the following, the equality $u=u_{i,j}^{x,\nu}$ on $\partial Q_r^\nu(x)$ is always meant to hold in the sense of traces, or by a classical argument, in a neighborhood of $\partial Q_r^\nu(x)$. 

Moreover, we consider the associated cell formulas
\begin{align}
&\psi'(x, c_i,c_j ,\nu)\coloneq \limsup_{r\to 0^+} \liminf_{k\to\infty} \frac{m_k( c_i,c_j ,Q_r^\nu(x))}{r^{n-1}},\label{eq:cell1}\\
&\psi''(x, c_i,c_j ,\nu)\coloneq \limsup_{r\to 0^+} \limsup_{k\to\infty} \frac{m_k( c_i,c_j ,Q_r^\nu(x))}{r^{n-1}}.\label{eq:cell2}
\end{align}
% We point out that both $\psi'$ and $\psi''$ depend on the chosen sequence $(\psi_k)_k$, and must be modified accordingly if one takes subsequences. 
% By construction, for all $x\in\R^n$, $ i,j\in\{1,\dots,M\} $, and $\nu\in\mathbb S^{n-1}$ the functions $\psi',\psi''$ satisfy
% \begin{align*}
% \psi'(x, c_i,c_j ,\nu)=\psi'(x, c_j,c_i ,-\nu) =\hat\psi'(x,j-i,\nu), \qquad \psi''(x, c_i,c_j ,\nu)=\psi''(x, c_j,c_i ,-\nu) =\hat\psi''(x,j-i,\nu),
% \end{align*}
% for some functions $\hat \psi'$, $\hat \psi''$ satisfying properties~(ii) and~(iii) of the class $\mathcal G(\lambda, \Lambda)$. 

The main result of this paper is the following $\Gamma$-convergence result.

\begin{theorem}\label{thm:main}
Let $\Omega\subset\R^n$ be a bounded open set with Lipschitz boundary. Let $(\psi_k)_k\subset\mathcal G(\lambda,\Lambda)$ and let $(s_k)_k \subset (0,1)$ satisfy $s_k\to 1$. Let $(\mathcal F_k)_k$ be defined as in~\eqref{eq:Fk}.
\begin{itemize}
\item[(1)] {\bf Compactness.} Let $(u_k)_k\subset L^1(\Omega)$ be such that 
\[
\sup_{k\in\N}\mathcal F_k(u_k)<\infty.
\]
Then, there exist a not relabeled subsequence and a function $u\in BV(\Omega;T)$ such that 
\[
u_k\to u\quad\text{strongly in $L^1(\Omega)$ as $k\to\infty$}.
\]
\item[(2)] {\bf $\Gamma$-convergence.} Assume that $(\psi_k)_k$ satisfies the uniform approximation condition~\eqref{A} and that $(s_k)_k$ satisfies the compatibility condition~\eqref{eq:compatibility}. Then, there exists a not relabeled subsequence such that for all $x\in \Omega$, $i,j\in\{1,\dots,M\}$, and $\nu\in \mathbb S^{n-1}$ 
\[
\psi'(x, c_i,c_j ,\nu)=\psi''(x, c_i,c_j ,\nu)\eqqcolon \psi_0 (x, c_i,c_j ,\nu),
\]
where $\psi'$ and $\psi''$ are computed along this subsequence. Moreover, for this subsequence, the sequence of functionals $(\mathcal F_k)_k$ $\Gamma$-converges with respect to the strong topology of $L^1(\Omega)$ to the functional $\mathcal F_0 \colon L^1(\Omega)\to [0,\infty]$, defined as
\begin{equation}\label{eq:Fhom}
\mathcal F_0 (u) \coloneqq
\begin{cases}
\displaystyle\int_{S_u \cap \Omega} \psi_0 \big(y, u^+(y), u^-(y), \nu_u(y)\big)\, \de \mathcal H^{n-1}(y) & \text{if $u\in BV(\Omega;T)$},\\
\infty & \text{otherwise}.
\end{cases}
\end{equation}
\end{itemize}
\end{theorem}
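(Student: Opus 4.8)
The plan is to prove the two parts of Theorem~\ref{thm:main} separately, following the roadmap outlined in the introduction. For part~(1), the compactness, I would start from a sequence $(u_k)_k$ with $\sup_k\mathcal F_k(u_k)<\infty$; by definition of $\mathcal F_k$ in~\eqref{eq:Fk}, up to a vanishing error we may pick competitors $v_k\in\mathcal A^{s_k}(u_k,\Omega)$ with $\int_\Omega\psi_k(y,\tfrac{\de D^{s_k}v_k}{\de|D^{s_k}v_k|})\,\de|D^{s_k}v_k|$ bounded, whence by assumption (iii) the fractional variations $|D^{s_k}v_k|(\Omega)$ are uniformly bounded. Since the $v_k$ take values in the finite set $T$, they are uniformly bounded in $L^\infty(\R^n)$. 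I would then use Proposition~\ref{prop:bvss0} (with the uniform-in-$s$ behaviour of the constant recorded in Remark~\ref{rem:unif}) to transfer the bound to a fixed $s_0<1$, obtaining $\sup_k|D^{s_0}v_k|(\Omega')<\infty$ on a suitable $\Omega'$ (after localizing with a cut-off via Proposition~\ref{prop:app}, controlling the Leibniz error by the $L^\infty$-bound as described in the introduction), and then invoke a compactness theorem for $BV^{s_0}$ (as in~\cite{CS23-1}) to extract an $L^1_{\mathrm{loc}}$-convergent subsequence $v_k\to v$. Since $v=u_k$ a.e.\ on $\Omega$ in the limit and $v$ takes values in $T$ a.e., the limit lies in $L^1(\Omega;T)$; finally Proposition~\ref{prop:lsc-Ds} yields $|Dv|(\Omega)<\infty$, so $v\in BV(\Omega;T)$, which is the desired $u$.

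For the $\Gamma$-liminf inequality in part~(2), I would reduce the fractional functional to a classical $BV$-functional. Given $u_k\to u$ in $L^1(\Omega)$ with $\liminf_k\mathcal F_k(u_k)<\infty$, pick near-optimal $v_k\in\mathcal A^{s_k}(u_k,\Omega)$; by Proposition~\ref{prop:rep} (announced in the introduction as the extension of $\nabla v=\nabla^s u$ to $BV^s\cap L^\infty$) there are $w_k\in BV(\Omega)$ with $Dw_k=D^{s_k}v_k$, so $\mathcal F_k(u_k)$ is, up to $o(1)$, the anisotropic $BV$-energy of $w_k$ with density $\psi_k$. Using the anisotropic Coarea Formula of Proposition~\ref{prop:app-BVT} I would replace $w_k$ by a piecewise-constant $BV(\Omega;T)$-function without increasing the energy in the limit, then apply the $\Gamma$-liminf inequality of~\cite{FS20} (which covers the class $\mathcal G(\lambda,\Lambda)$ of $x$-dependent, convex, $1$-homogeneous densities) to the sequence $\psi_k$: along the subsequence for which the cell formulas coincide, the liminf of the classical energies is bounded below by $\int_{S_u\cap\Omega}\psi_0(y,u^+,u^-,\nu_u)\,\de\mathcal H^{n-1}$. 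One must check that the diagonal/subsequence extraction making $\psi'=\psi''$ can be performed by the usual Ambrosio--Braides argument (separability of the relevant parameter space, monotonicity in $r$), exactly as in~\cite{AB90-2}.

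For the $\Gamma$-limsup inequality, which I expect to be the main obstacle, I would first treat densities of the product form $\psi_k=\sum_{i=1}^N b_k^i(x)\varphi_k^i(\xi)$ from~\eqref{A}. Fix $u\in BV(\Omega;T)$ and take a recovery sequence $w_k\in BV(\Omega;T)$ for the classical limit functional with density $\psi_k$ (available from~\cite{AB90-2, FS20}; one may take $w_k$ piecewise constant with polyhedral interfaces and, after the extension of Proposition~\ref{prop:extension}, defined on all of $\R^n$). The key point is to show these $w_k$ are also admissible and asymptotically optimal for the fractional functional: writing $D^{s_k}w_k=\mathcal I^{1-s_k}Dw_k$ by Proposition~\ref{prop:I1-sDu}, and using convexity together with the duality/integration-by-parts formula~\eqref{eq:dual}, one moves the Riesz operator $\mathcal I^{1-s_k}$ off the measure $Dw_k$ and onto the weights $b_k^i$, so that $\mathcal F_k(w_k)$ is controlled by $\sum_i\int_{S_{w_k}}\mathcal I^{1-s_k}b_k^i\,\varphi_k^i(\nu_{w_k})\,\de\mathcal H^{n-1}$ plus lower-order terms involving the $L^\infty$-bound in~\eqref{eq:V1s} and Remark~\ref{rem:behaviour}. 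The error is then governed by $\|\mathcal I^{1-s_k}b_k^i-b_k^i\|_{L^\infty}$, which by Lemma~\ref{lem:a-app} is small precisely when the compatibility condition~\eqref{eq:compatibility} holds (this is where $(1-s_k)\log r_{b_k^i}(\eta)\to0$ enters, controlling the interplay between the Riesz-potential length scale $(1-s_k)$-behaviour and the oscillation scale of $b_k^i$). Passing to the limit gives $\limsup_k\mathcal F_k(w_k)\le\int_{S_u\cap\Omega}\psi_0(y,u^+,u^-,\nu_u)\,\de\mathcal H^{n-1}$ for product densities. Finally, for a general $\psi_k\in\mathcal G(\lambda,\Lambda)$ satisfying~\eqref{A}, one approximates $\psi_k$ by its $N$-term expansion up to $\delta$ uniformly in $x,\nu$ and in $k$ by (A3); this perturbs every $m_k$, hence $\psi'$ and $\psi''$, by at most $C\delta$, and letting $\delta\to0$ after a diagonal argument closes the estimate. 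The delicate bookkeeping is to keep all constants uniform in $k$ while simultaneously sending $r\to0$, $k\to\infty$, $\delta\to0$ in the right order, and to ensure the subsequence from the liminf step is compatible with the one from the limsup step — this is handled, as usual, by extracting once for the (countably many) rational parameters and using continuity of $\psi_0$.
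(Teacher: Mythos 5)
Your proposal is correct and follows essentially the same route as the paper: compactness via a cut-off estimate plus Proposition~\ref{prop:bvss0} (Remark~\ref{rem:unif}) and the lower semicontinuity of Proposition~\ref{prop:lsc-Ds}; liminf via the representation $Dw_k = D^{s_k}v_k$ (Proposition~\ref{prop:rep}), truncation to $T$-valued functions through the coarea-based Proposition~\ref{prop:app-BVT}, and the classical $\Gamma$-liminf from \cite{FS20}; and limsup via a classical recovery sequence, Jensen's inequality for sublinear functions of measures plus Fubini to shift $\mathcal{I}^{1-s_k}$ onto the weights $b_k^i$, with Lemma~\ref{lem:a-app} and the compatibility condition~\eqref{eq:compatibility} controlling the error. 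Aside from a couple of mislabelled references (the cut-off estimate you need is Lemma~\ref{lem:bvomega-rn2}, not Proposition~\ref{prop:app}, and Proposition~\ref{prop:app-BVT} is a consequence of the anisotropic coarea formula Proposition~\ref{prop:an-coarea}), one step you glossed over (Lemma~\ref{lem:comp2} is needed to fix the additive constants coming from Proposition~\ref{prop:rep} so that $w_k\to u$ in $L^1(\Omega)$, which Proposition~\ref{prop:app-BVT} requires), and a slightly different bookkeeping at the very end (the paper keeps $\psi_0$ fixed and carries an error $\delta\sup_k|D^{s_k}w_k^\varepsilon|(\Omega)$ rather than perturbing the cell formulas $\psi',\psi''$), the structure matches the paper's argument.
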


\begin{remark}
The compactness result in Theorem~\ref{thm:main}(1) does not require the uniform approximation condition~\eqref{A} or the compatibility condition~\eqref{eq:compatibility}. In particular, these conditions are needed only in the proof of the $\Gamma$-limsup inequality, while the $\Gamma$-liminf inequality holds for any sequences $(\psi_k)_k\subset\mathcal G(\lambda,\Lambda)$ and $(s_k)_k\subset (0,1)$ with $s_k\to 1$ as $k\to\infty$.
\end{remark}

The proof of Theorem~\ref{thm:main}(1) is presented in Section~\ref{sec:comp}, while that of Theorem~\ref{thm:main}(2) is given in Section~\ref{sec:Gamma}. As a consequence of Theorem~\ref{thm:main} and the Urysohn property of $\Gamma$-convergence, see~\cite[Proposition~8.3]{DM93}, we deduce the following corollary.

\begin{corollary}\label{coro:Gamma-conv}
Let $\Omega\subset\R^n$ be a bounded open set with Lipschitz boundary. Let $(\psi_k)_k\subset\mathcal G(\lambda,\Lambda)$ satisfy the uniform approximation condition~\eqref{A} and let $(s_k)_k \subset (0,1)$ satisfy $s_k\to 1$ as $k\to\infty$ and the compatibility condition~\eqref{eq:compatibility}. Let $(\mathcal F_k)_k$ be defined as in~\eqref{eq:Fk}. Assume that for all $x\in \Omega$, $i,j\in\{1,\dots,M\}$, and $\nu\in \mathbb S^{n-1}$ we have 
\[
\psi'(x, c_i,c_j ,\nu)=\psi''(x, c_i,c_j ,\nu) \eqqcolon \psi_0 (x, c_i,c_j ,\nu).
\]
Then, $(\mathcal F_k)_k$ $\Gamma$-converges with respect to the strong topology of $L^1(\Omega)$ to the functional $\mathcal F_0 $ defined by~\eqref{eq:Fhom}.
\end{corollary}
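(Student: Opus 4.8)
\textbf{Proof proposal for Corollary~\ref{coro:Gamma-conv}.}

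The plan is to reduce the corollary to Theorem~\ref{thm:main} by exploiting the Urysohn property of $\Gamma$-convergence, as quoted from~\cite[Proposition~8.3]{DM93}. That property states that a sequence $(\mathcal F_k)_k$ $\Gamma$-converges to a functional $\mathcal F_0$ if and only if every subsequence of $(\mathcal F_k)_k$ admits a further subsequence which $\Gamma$-converges to the \emph{same} limit $\mathcal F_0$. So the strategy is: take an arbitrary subsequence of $(\mathcal F_k)_k$, apply Theorem~\ref{thm:main}(2) to extract a further subsequence along which $\Gamma$-convergence holds, and then verify that the resulting $\Gamma$-limit is independent of the chosen subsequence — indeed that it always equals the functional $\mathcal F_0$ built from the fixed density $\psi_0$ in the statement.

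First I would fix an arbitrary subsequence $(\mathcal F_{k_m})_m$ of $(\mathcal F_k)_k$. The hypotheses of the corollary — namely $(\psi_k)_k \subset \mathcal G(\lambda,\Lambda)$ satisfying~\eqref{A}, and $(s_k)_k \subset (0,1)$ with $s_k \to 1$ satisfying~\eqref{eq:compatibility} — are clearly inherited by the subsequences $(\psi_{k_m})_m$ and $(s_{k_m})_m$: condition~\eqref{A} provides, for each $\delta>0$, the same $N$ and, for each index $k_m$, the families $(b_{k_m}^i)_i$, $(\varphi_{k_m}^i)_i$ already furnished by the full sequence; and~\eqref{eq:compatibility} holds a fortiori along any subsequence since convergence to $0$ of $(1-s_k)\log(r_{b_k^i}(\eta))$ is stable under passing to subsequences. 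Therefore Theorem~\ref{thm:main}(2) applies to the pair of subsequences $(\psi_{k_m})_m$, $(s_{k_m})_m$: there exists a further subsequence $(\mathcal F_{k_{m_\ell}})_\ell$ along which the cell formulas $\psi'$ and $\psi''$ — now computed along this sub-subsequence — coincide, defining a density $\widetilde\psi_0(x,c_i,c_j,\nu)$, and $(\mathcal F_{k_{m_\ell}})_\ell$ $\Gamma$-converges in $L^1(\Omega)$ to the functional $\widetilde{\mathcal F}_0$ given by formula~\eqref{eq:Fhom} with $\widetilde\psi_0$ in place of $\psi_0$.

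The key step is then to identify $\widetilde\psi_0$ with $\psi_0$. Here I would use the standing assumption of the corollary that $\psi'(x,c_i,c_j,\nu) = \psi''(x,c_i,c_j,\nu) = \psi_0(x,c_i,c_j,\nu)$ for all $x \in \Omega$, $i,j$, $\nu$, where $\psi'$ and $\psi''$ are the cell formulas~\eqref{eq:cell1}--\eqref{eq:cell2} associated with the \emph{full} sequence. Since $\psi' = \psi''$ for the full sequence, the double-limit quantities
\[
\limsup_{r\to 0^+}\liminf_{k\to\infty}\frac{m_k(c_i,c_j,Q_r^\nu(x))}{r^{n-1}}
\quad\text{and}\quad
\limsup_{r\to 0^+}\limsup_{k\to\infty}\frac{m_k(c_i,c_j,Q_r^\nu(x))}{r^{n-1}}
\]
agree, which forces, for each fixed $r>0$, that $\liminf_{k\to\infty}$ and $\limsup_{k\to\infty}$ of $m_k(c_i,c_j,Q_r^\nu(x))/r^{n-1}$ are pinched together in the $r\to 0^+$ limit; in particular the inner limit along \emph{any} subsequence $(k_{m_\ell})_\ell$ is squeezed between these, so
\[
\liminf_{\ell\to\infty}\frac{m_{k_{m_\ell}}(c_i,c_j,Q_r^\nu(x))}{r^{n-1}}
\ \ge\ \liminf_{k\to\infty}\frac{m_k(c_i,c_j,Q_r^\nu(x))}{r^{n-1}},
\qquad
\limsup_{\ell\to\infty}\frac{m_{k_{m_\ell}}(c_i,c_j,Q_r^\nu(x))}{r^{n-1}}
\ \le\ \limsup_{k\to\infty}\frac{m_k(c_i,c_j,Q_r^\nu(x))}{r^{n-1}}.
\]
Taking $\limsup_{r\to 0^+}$ yields $\psi'(x,c_i,c_j,\nu) \le \widetilde\psi_0(x,c_i,c_j,\nu) \le \psi''(x,c_i,c_j,\nu)$, and since the two extreme terms coincide and equal $\psi_0$ by hypothesis, we get $\widetilde\psi_0 = \psi_0$. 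Consequently $\widetilde{\mathcal F}_0 = \mathcal F_0$, the functional~\eqref{eq:Fhom}. Since the subsequence $(\mathcal F_{k_m})_m$ was arbitrary and its further subsequence $\Gamma$-converges to the fixed $\mathcal F_0$, the Urysohn property~\cite[Proposition~8.3]{DM93} gives that the whole sequence $(\mathcal F_k)_k$ $\Gamma$-converges to $\mathcal F_0$ in the strong $L^1(\Omega)$-topology, which is the assertion.

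I expect the only mildly delicate point to be the bookkeeping in the previous paragraph: making sure that the cell formulas appearing in Theorem~\ref{thm:main}(2), which are a priori defined relative to the extracted sub-subsequence, are correctly compared with the ones defined relative to the full sequence, and that the monotonicity of $\liminf$/$\limsup$ under subsequences is invoked in the right direction. Everything else is a formal application of the Urysohn characterization of $\Gamma$-convergence; no new analysis on fractional gradients or $BV$-functions is needed beyond what Theorem~\ref{thm:main} already provides.
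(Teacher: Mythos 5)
Your proposal is correct and follows exactly the route the paper indicates (the paper gives only the one-line remark that the corollary ``is a consequence of Theorem~\ref{thm:main} and the Urysohn property''). You have filled in the details correctly: the hypotheses~\eqref{A} and~\eqref{eq:compatibility} pass to subsequences, the sub-subsequential $\Gamma$-limit has density $\widetilde\psi_0$ squeezed between $\psi'$ and $\psi''$ of the full sequence by the standard $\liminf$/$\limsup$ monotonicity under passage to subsequences, and the assumed equality $\psi'=\psi''=\psi_0$ then forces $\widetilde\psi_0=\psi_0$, so Urysohn applies.
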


\begin{remark}[The uniform approximation condition~\eqref{A}]\label{rem:A}
\begin{itemize}
\item[(a)] A \emph{prototype example} of a sequence $(\psi_k)_k\subset\mathcal G(\lambda,\Lambda)$ arises in the context of homogenization, where
\begin{equation}\label{eq:psik}
\psi_k(x,\xi) \coloneqq \psi\left(\frac{x}{\varepsilon_k},\xi\right)\quad\text{for all $(x,\xi)\in\R^n\times\R^n$},
\end{equation}
with $\psi\in\mathcal G(\lambda,\Lambda)$ and $(\varepsilon_k)_k\subset(0,\infty)$ a sequence satisfying $\varepsilon_k\to 0$. In this setting, the sequence $(\psi_k)_k$ satisfies the uniform approximation condition~\eqref{A} if and only if $\psi$ satisfies the approximation condition~\eqref{A}.
 
\item[(b)] The approximation condition~\eqref{A} holds when $\psi\in\mathcal G(\lambda,\Lambda)$ is periodic in $x\in\R^n$ with respect to a fixed full-rank lattice $L\subset\R^n$. Indeed, in this case, $\psi$ can be seen as a function defined on $\mathbb T^n(L)\times\R^n$, where $\mathbb T^n(L)$ denotes the $n$-dimensional torus $\R^n/L$.
Since $\psi$ is uniformly continuous on $\mathbb T^n(L) \times \mathbb S^{n-1}$, for every $\delta>0$ there exists $r_\delta>0$ such that
\[
|\psi(x,\nu)-\psi(y,\nu)|<\delta\quad \text{for all $x,y\in\mathbb T^n(L)$ with $|x-y|<r_\delta$ and $\nu\in\mathbb S^{n-1}$}.
\]
Let $(B_{r_\delta}(x^i))_{i=1}^N$ be an open cover of $\mathbb T^n(L)$, and let $(b^i)_{i=1}^N$ be a partition of unity subordinate to this cover. Define
\[
\varphi^i(\xi) \coloneqq \psi(x^i,\xi)\quad \text{for all $\xi\in\R^n$ and $i\in\{1, \dots,N\}$}.
\]
Then $\psi$ satisfies the approximation condition~\eqref{A} with families $(b^i)_{i=1}^N, (\varphi^i)_{i=1}^N$ since
\[
\left|\psi(x,\nu)-\sum_{i=1}^N b^i(x) \varphi^i(\nu)\right|\le \sum_{i=1}^N b^i(x) \left|\psi(x,\nu)-\psi(x^i,\nu)\right|\le \delta
\]
for all $(x,\nu)\in\mathbb T^n(L)\times\mathbb S^{n-1}$.

\item[(c)] With a similar argument as in (b), we can also consider functions $\psi\in\mathcal G(\lambda,\Lambda)$ which are \emph{almost periodic in $x\in\R^n$ uniformly in $\xi\in\R^n$} in the following sense: there exists a sequence $(\psi_h)_h\subset \mathcal G(\lambda,\Lambda)$, with each $\psi_h$ periodic in $x\in\R^n$ with respect to a fixed lattice $L_h$ (possibly depending on $h$), such that 
\[
\|\psi-\psi_h\|_{C(\R^n\times \mathbb S^{n-1})}\to 0\quad\text{as $h\to\infty$}.
\]
\end{itemize}

\end{remark}

\begin{remark}[The compatibility condition~\eqref{eq:compatibility}]\label{rem:compatibility}
\begin{itemize}
\item[(a)] In the homogenization setting described in Remark~\ref{rem:A}(a), if the function $\psi\in\mathcal G(\lambda,\Lambda)$ satisfies the approximation condition~\eqref{A} with families $(b^i)_{i=1}^N,(\varphi^i)_{i=1}^N$, then one can define the family $(b^i_k)_{i=1}^N$ as $b^i_k(x) \coloneqq b^i(x/\varepsilon_k)$ for $x\in\R^n$, $k\in\N$, and $i\in\{1,\dots,N\}$, and thus
\[
r_{b_k^i}(\eta)=\varepsilon_k r_{b^i}(\eta)\quad\text{for all $k\in\N$, $i \in \{1,\dots,N\}$, and $\eta>0$}.
\]
In this case, the compatibility condition~\eqref{eq:compatibility} is equivalent to requiring
\begin{equation}\label{eq:compatibility2}
(1-s_k)\log \varepsilon_k \to 0 \quad \text{as $k \to \infty$}.
\end{equation}
This condition ensures that the sequence $(\varepsilon_k)_k$ does not vanish too quickly in comparison to the rate at which $(s_k)_k$ converges to 1. 
\item[(b)] We point out that conditions similar to~\eqref{eq:compatibility2} appear in the literature in the context of fractional gradients and homogenization. For example, a similar assumption is made in~\cite[Eq.~(3.1)]{ACFS25}, in the context of $\Gamma$-convergence of discrete dislocation fractional energies, where the roles of $1-s_k$ and $\varepsilon_k$ are played by parameters $\alpha$ and $\rho_\alpha$, respectively. Furthermore, in~\cite[Eq.~(8)]{BBD24}, concerning the homogenization of quadratic fractional energies of Gagliardo-type, the authors require
\[
\frac{1-s_k}{\varepsilon_k^2} \to 0 \quad \text{as $k \to \infty$},
\]
which in particular implies~\eqref{eq:compatibility2}.
\end{itemize}
\end{remark}

Let us focus on the homogenization case introduced already in Remark~\ref{rem:A}(a). For all $x\in\R^n$, $ i,j\in\{1,\dots,M\} $, $\nu\in\mathbb S^{n-1}$, and $r>0$, we define
\begin{align*}
& m( c_i,c_j ,Q_r^\nu(x)) \\
&\coloneqq \inf \left\{\int_{Q_r^\nu(x)}\psi \left(y,\frac{\de Du}{\de |Du|}(y)\right)\de|Du|(y):\text{$u\in BV(Q_r^\nu(x);T)$, $u=u_{i,j}^{x,\nu}$ on $\partial Q_r^\nu(x)$}\right\},
\end{align*}
where $u_{i,j}^{x,\nu}$ is the function defined in~\eqref{eq:uij}.

\begin{lemma}
Let $\psi\in\mathcal G(\lambda,\Lambda)$ and let $(\varepsilon_k)_k \subset (0,\infty)$ be such that $\varepsilon_k\to 0$ as $k\to\infty$. Assume that for all $x\in\R^n$, $ i,j\in\{1,\dots,M\} $, and $\nu\in \mathbb S^{n-1}$ the following limit exists and it is independent of $x$:
\begin{equation}\label{eq:AP}
\lim_{t\to\infty}\frac{m( c_i,c_j ,Q_t^\nu(tx))}{t^{n-1}} \eqqcolon \psi_{\rm hom} ( c_i,c_j ,\nu).
\end{equation}
Then, for all $r>0$ and $x\in\R^n$ it holds that
\[
\lim_{k\to\infty}\frac{m_k( c_i,c_j ,Q_r^\nu(x))}{r^{n-1}}= \psi_{\rm hom} ( c_i,c_j ,\nu).
\]
In particular, for all $x\in\R^n$, $i,j\in\{1,\dots,M\}$, and $\nu\in \mathbb S^{n-1}$ we have
\[
\psi'(x, c_i,c_j ,\nu)=\psi''(x, c_i,c_j ,\nu)= \psi_{\rm hom} ( c_i,c_j ,\nu).
\]
\end{lemma}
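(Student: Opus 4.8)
The plan is to reduce the statement about $m_k$ to the classical homogenization result for the nonscaled functional, exploiting the scaling structure $\psi_k(x,\xi)=\psi(x/\varepsilon_k,\xi)$. First I would perform the change of variables $y = \varepsilon_k z$ in the minimization problem defining $m_k(c_i,c_j,Q_r^\nu(x))$: if $u \in BV(Q_r^\nu(x);T)$ with $u = u_{i,j}^{x,\nu}$ on $\partial Q_r^\nu(x)$, then $\tilde u(z) \coloneqq u(\varepsilon_k z)$ belongs to $BV(Q_{r/\varepsilon_k}^\nu(x/\varepsilon_k);T)$, attains the boundary datum $u_{i,j}^{x/\varepsilon_k,\nu}$ on $\partial Q_{r/\varepsilon_k}^\nu(x/\varepsilon_k)$, and one has $|Du|(\de y) = \varepsilon_k^{n-1}\,|D\tilde u|(\de z)$ (since $|Du|$ is concentrated on the jump set, which scales like an $(n-1)$-dimensional measure), together with $\frac{\de Du}{\de|Du|}(\varepsilon_k z) = \frac{\de D\tilde u}{\de|D\tilde u|}(z)$ because the Radon–Nikodym derivative is invariant under dilations. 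Substituting and using $\psi_k(\varepsilon_k z,\xi) = \psi(z,\xi)$ yields the identity
\begin{equation*}
m_k(c_i,c_j,Q_r^\nu(x)) = \varepsilon_k^{n-1}\, m\!\left(c_i,c_j,Q_{r/\varepsilon_k}^\nu(x/\varepsilon_k)\right).
\end{equation*}

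Next I would divide by $r^{n-1}$ and rewrite the right-hand side in a form amenable to~\eqref{eq:AP}. Setting $t_k \coloneqq r/\varepsilon_k$, so that $t_k \to \infty$ as $k\to\infty$, and noting $x/\varepsilon_k = t_k (x/r)$, we get
\begin{equation*}
\frac{m_k(c_i,c_j,Q_r^\nu(x))}{r^{n-1}} = \frac{\varepsilon_k^{n-1}}{r^{n-1}}\, m\!\left(c_i,c_j,Q_{t_k}^\nu(t_k (x/r))\right) = \frac{m\!\left(c_i,c_j,Q_{t_k}^\nu(t_k (x/r))\right)}{t_k^{n-1}}.
\end{equation*}
By the hypothesis~\eqref{eq:AP} applied with the point $x/r$ in place of $x$, the right-hand side converges as $t_k\to\infty$ to $\psi_{\rm hom}(c_i,c_j,\nu)$, and crucially this limit is independent of $x/r$, hence of $x$ and $r$. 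This proves $\lim_{k\to\infty} m_k(c_i,c_j,Q_r^\nu(x))/r^{n-1} = \psi_{\rm hom}(c_i,c_j,\nu)$ for every $x\in\R^n$, $r>0$.

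Finally, since the limit over $k\to\infty$ exists for every fixed $r>0$ and equals the constant $\psi_{\rm hom}(c_i,c_j,\nu)$, both the $\liminf_{k\to\infty}$ and $\limsup_{k\to\infty}$ in~\eqref{eq:cell1}–\eqref{eq:cell2} coincide with $\psi_{\rm hom}(c_i,c_j,\nu)$, independently of $r$; taking then $\limsup_{r\to 0^+}$ is trivial and gives $\psi'(x,c_i,c_j,\nu) = \psi''(x,c_i,c_j,\nu) = \psi_{\rm hom}(c_i,c_j,\nu)$.

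\textbf{Main obstacle.} The substantive point is the scaling identity for $m_k$: one must check carefully that the dilation $y\mapsto \varepsilon_k z$ sets up a bijection between admissible competitors on $Q_r^\nu(x)$ and on $Q_{r/\varepsilon_k}^\nu(x/\varepsilon_k)$ respecting the respective boundary data, and that the anisotropic measure $\psi(z,\cdot)\,|D\tilde u|$ transforms with the factor $\varepsilon_k^{n-1}$ rather than $\varepsilon_k^n$ — this is where the linear-growth / interfacial nature of the energy enters, and it follows from the fact that $Du$ for $u\in BV(\cdot;T)$ is $\mathcal H^{n-1}$-rectifiable and supported on $S_u$. Everything else is a matter of bookkeeping with the change of variables.
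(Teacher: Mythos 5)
Your proposal is correct and follows essentially the same argument as the paper's proof: both rescale via $y = \varepsilon_k z$ to obtain $m_k(c_i,c_j,Q_r^\nu(x)) = \varepsilon_k^{n-1}\, m(c_i,c_j,Q_{r/\varepsilon_k}^\nu(x/\varepsilon_k))$, then write $x/\varepsilon_k = (r/\varepsilon_k)(x/r)$ and invoke the hypothesis~\eqref{eq:AP} with $t_k = r/\varepsilon_k \to \infty$. Your additional justification of the $\varepsilon_k^{n-1}$ scaling via $\mathcal H^{n-1}$-rectifiability of $S_u$, and your explicit rewriting of the boundary datum as $u_{i,j}^{x/\varepsilon_k,\nu}$, are accurate elaborations of details the paper leaves implicit.
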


\begin{proof}
Let $u\in BV(Q_r^\nu(x);T)$ be such that $u=u_{i,j}^{x,\nu}$ on $\partial Q_r^\nu(x)$. We define
\[
u_k(y) \coloneqq u(\varepsilon_k y)\quad\text{for $y\in Q_{r/\varepsilon_k}^\nu(x/\varepsilon_k)$}.
\]
Then, $u_k\in BV(Q_{r/\varepsilon_k}^\nu(x/\varepsilon_k);T)$ and $u_k=u_{i,j}^{x,\nu}$ on $\partial Q_{r/\varepsilon_k}^\nu(x/\varepsilon_k)$. By the change of variable $y'=y/\varepsilon_k$ we have
\begin{align*}
\int_{Q_r^\nu(x)}\psi\left(\frac{y}{\varepsilon_k},\frac{\de Du}{\de |Du|}(y)\right)\de|Du|(y)&=\varepsilon_k^{n-1}\int_{Q_{r/\varepsilon_k}^\nu(x/\varepsilon_k)}\psi\left(y',\frac{\de Du_k}{\de |Du_k|}(y')\right)\de|Du_k|(y').
\end{align*}
Hence,
\begin{equation*}
m_k( c_i,c_j ,Q_r^\nu(x))=\varepsilon_k^{n-1}m\left( c_i,c_j ,Q_\frac{r}{\varepsilon_k}^\nu\left(\frac{x}{\varepsilon_k}\right)\right),
\end{equation*}
which gives
\[
\lim_{k\to\infty}\frac{m_k( c_i,c_j ,Q_r^\nu(x))}{r^{n-1}}=\lim_{k\to\infty}\left(\frac{\varepsilon_k}{r}\right)^{n-1}m\left( c_i,c_j ,Q_\frac{r}{\varepsilon_k}^\nu\left(\frac{r}{\varepsilon_k}\frac{x}{r}\right)\right)=\psi_{\rm hom}( c_i,c_j ,\nu),
\]
and concludes the proof.
\end{proof}

\begin{example}
A simple case in which condition~\eqref{eq:AP} holds is when $\psi\in\mathcal G(\lambda,\Lambda)$ is periodic in $x\in\R^n$ with respect to a fixed full-rank lattice $L$, see~\cite[Eq.~(11)]{BDV96}. Moreover, as observed in Remark~\ref{rem:A}(b), in this case $\psi$ satisfies the approximation condition~\eqref{A}. Hence, if $(\varepsilon_k)_k$ satisfies the compatibility condition~\eqref{eq:compatibility2}, we may apply Corollary~\ref{coro:Gamma-conv} to deduce that the sequence $(\mathcal F_k)_k$, defined by~\eqref{eq:Fk} with densities $(\psi_k)_k$ as in~\eqref{eq:psik}, $\Gamma$-converges with respect to the strong topology of $L^1(\Omega)$ to the functional $\mathcal F_0$ given by~\eqref{eq:Fhom}, where the limit density $\psi_0=\psi_{\rm hom}$ is given by formula~\eqref{eq:AP}.
\end{example}

\section{Compactness}\label{sec:comp}

This section is devoted to the proof of Theorem~\ref{thm:main}(1). We begin by showing the following result, which allows us to pass from uniform bounds on the fractional variation in $\Omega$ to uniform bounds on the fractional variation in $\R^n$.

\begin{lemma}\label{lem:bvomega-rn2}
Let $\Omega\subseteq\R^n$ be an open set, $s\in (0,1)$, $p\in [1,\infty]$, and $u\in L^p(\R^n)$. For all $\psi\in C^1_c(\Omega)$ with $\|\psi\|_{L^\infty(\Omega)}\le 1$ we have
\begin{equation}\label{eq:loc-Ds-est}
|D^s(u\psi)|(\R^n)\le |D^su|(\Omega)+\frac{6\omega_n\mu_s}{s(1-s)2^s}\|u\|_{L^p(\R^n)}\|\psi\|^{1-s}_{L^{p'}(\R^n)}\|\nabla\psi\|^s_{L^{p'}(\R^n)}.
\end{equation}
\end{lemma}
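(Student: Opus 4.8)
The plan is to bound $|D^s(u\psi)|(\R^n)=V^s(u\psi,\R^n)$ directly from its defining supremum, using the fractional Leibniz rule~\eqref{eq:leibniz2} to split each test pairing into a term governed by $V^s(u,\Omega)$ and two remainder terms controlled by the $L^p$-estimates of Propositions~\ref{prop:gl-est1} and~\ref{prop:gl-est2}. We may assume $|D^su|(\Omega)<\infty$, since otherwise~\eqref{eq:loc-Ds-est} is trivial; note also that $u\psi\in L^p(\R^n)$, because $\psi$, extended by $0$ outside $\Omega$, satisfies $\|\psi\|_{L^\infty(\R^n)}\le 1$, so that $V^s(u\psi,\R^n)$ is meaningful.

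Fix $\Phi\in C^1_c(\R^n;\R^n)$ with $\|\Phi\|_{L^\infty(\R^n)}\le 1$, and regard $\psi$ as an element of $C^1_c(\R^n)$. Applying~\eqref{eq:leibniz2} to the pair $(\Phi,\psi)$ and rearranging gives, pointwise on $\R^n$,
\[
\psi\,\div^s\Phi=\div^s(\Phi\psi)-\Phi\cdot\nabla^s\psi-\div^s_{\rm NL}(\Phi,\psi).
\]
Multiplying by $u$ and integrating over $\R^n$ — all three integrands lie in $L^1(\R^n)$ by H\"older's inequality and the $L^{p'}$-estimates of Propositions~\ref{prop:gl-est1} and~\ref{prop:gl-est2}, since $u\in L^p(\R^n)$ while $\div^s(\Phi\psi)$, $\nabla^s\psi$ and $\div^s_{\rm NL}(\Phi,\psi)$ all belong to $L^{p'}(\R^n)$ — yields
\begin{align*}
\int_{\R^n} u\,\psi\,\div^s\Phi\,\de y
&=\int_{\R^n} u\,\div^s(\Phi\psi)\,\de y-\int_{\R^n} u\,\Phi\cdot\nabla^s\psi\,\de y\\
&\quad-\int_{\R^n} u\,\div^s_{\rm NL}(\Phi,\psi)\,\de y.
\end{align*}

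For the first term, note that $\supp(\Phi\psi)\subseteq\supp\psi\subset\subset\Omega$ and $\|\Phi\psi\|_{L^\infty(\Omega)}\le\|\Phi\|_{L^\infty(\R^n)}\|\psi\|_{L^\infty(\Omega)}\le 1$, so $\Phi\psi$ is an admissible competitor in the definition of $V^s(u,\Omega)$ and, by Proposition~\ref{prop:struct}, $\int_{\R^n} u\,\div^s(\Phi\psi)\,\de y\le V^s(u,\Omega)=|D^su|(\Omega)$. For the second term, H\"older's inequality and $\|\Phi\|_{L^\infty(\R^n)}\le 1$ give the bound $\|u\|_{L^p(\R^n)}\|\nabla^s\psi\|_{L^{p'}(\R^n)}$, which by Proposition~\ref{prop:gl-est1} applied with exponent $p'$ is at most $\frac{2\omega_n\mu_s}{s(1-s)2^s}\|u\|_{L^p(\R^n)}\|\psi\|^{1-s}_{L^{p'}(\R^n)}\|\nabla\psi\|^{s}_{L^{p'}(\R^n)}$. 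For the third term, H\"older's inequality gives $\|u\|_{L^p(\R^n)}\|\div^s_{\rm NL}(\Phi,\psi)\|_{L^{p'}(\R^n)}$, and Proposition~\ref{prop:gl-est2} applied with exponent $p'$ (using $\|\Phi\|_{L^\infty(\R^n)}\le 1$) bounds this by $\frac{4\omega_n\mu_s}{s(1-s)2^s}\|u\|_{L^p(\R^n)}\|\psi\|^{1-s}_{L^{p'}(\R^n)}\|\nabla\psi\|^{s}_{L^{p'}(\R^n)}$. Summing the three contributions (using $2+4=6$) and then taking the supremum over all such $\Phi$ turns the left-hand side into $V^s(u\psi,\R^n)=|D^s(u\psi)|(\R^n)$, which is exactly~\eqref{eq:loc-Ds-est}.

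This is essentially a bookkeeping computation once the fractional Leibniz rule and the $L^p$-bounds are in place, so no substantial obstacle is expected. The two points requiring a little care are: verifying that $\Phi\psi$ is a legitimate test field for $V^s(u,\Omega)$, i.e.\ that it is compactly supported inside $\Omega$ with sup-norm at most one; and matching the interpolation exponents arising from $\div^s_{\rm NL}$ in Proposition~\ref{prop:gl-est2} with those from $\nabla^s$ in Proposition~\ref{prop:gl-est1}, so that both remainders carry the same factor $\|\psi\|^{1-s}_{L^{p'}(\R^n)}\|\nabla\psi\|^{s}_{L^{p'}(\R^n)}$.
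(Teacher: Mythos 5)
Your proof is correct and is essentially the paper's proof: the same Leibniz decomposition $\psi\,\div^s\Psi = \div^s(\psi\Psi) - \Psi\cdot\nabla^s\psi - \div^s_{\rm NL}(\Psi,\psi)$, the same recognition that $\psi\Psi$ is an admissible test field for $V^s(u,\Omega)$, the same H\"older and $L^{p'}$-bounds on the two error terms, and the same supremum at the end; you merely write $\Phi$ where the paper writes $\Psi$ and spell out the integrability and admissibility points the paper leaves implicit. One small thing worth flagging: you assert that Proposition~\ref{prop:gl-est2} yields the factor $\|\psi\|^{1-s}_{L^{p'}}\|\nabla\psi\|^{s}_{L^{p'}}$, but as literally printed that proposition has the exponents transposed, $\|\phi\|^{s}\|\nabla\phi\|^{1-s}$; the paper's own proof of this lemma uses the same $\|\psi\|^{1-s}\|\nabla\psi\|^{s}$ form you do, which (together with the Minkowski-plus-near/far-split argument that actually produces it) indicates the printed statement of Proposition~\ref{prop:gl-est2} contains a typo, so your computation is the intended one.
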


\begin{proof}
We fix $\Psi\in C^1_c(\R^n;\R^n)$ with $\|\Psi\|_{L^\infty(\R^n)}\le 1$. By Propositions~\ref{prop:gl-est1},~\ref{prop:gl-est2},~\ref{prop:nonlocal-leibniz}, and H\"older's inequality, we have
\begin{align*}
&\int_{\R^n} u(y) \psi(y)\div^s\Psi(y)\,\de y\\
&=\int_{\R^n}u(y) \div^s(\psi\Psi)(y)\,\de y-\int_{\R^n}u(y) \nabla^s \psi(y)\cdot\Psi(y)\,\de y-\int_{\R^n}u(y)\div^s_{\rm NL}(\Psi,\psi)(y)\,\de y\\
&\le\int_{\R^n}u(y) \div^s(\psi\Psi)(y)\,\de y+\frac{6\omega_n\mu_s}{s(1-s)2^s}\|u\|_{L^p(\R^n)}\|\psi\|^{1-s}_{L^{p'}(\R^n)}\|\nabla \psi\|^s_{L^{p'}(\R^n)}\\
&\le |D^su|(\Omega)+\frac{6\omega_n\mu_s}{s(1-s)2^s}\|u\|_{L^p(\R^n)}\|\psi\|^{1-s}_{L^{p'}(\R^n)}\|\nabla \psi\|^s_{L^{p'}(\R^n)}.
\end{align*}
By taking the supremum over all $\Psi\in C^1_c(\R^n;\R^n)$ with $\|\Psi\|_{L^\infty(\R^n)}\le 1$ we obtain~\eqref{eq:loc-Ds-est}.
\end{proof}

We are now in a position to prove our compactness result. We show the following result which will readily imply Theorem~\ref{thm:main}(1).

\begin{proposition}\label{prop:comp}
Let $\Omega\subseteq\R^n$ be an open set and let $(s_k)_k\subset (0,1)$ be such that $s_k\to 1$ as $k\to\infty$. Let $(u_k)_k\subset L^\infty(\R^n)$ satisfy
\[
\sup_{k\in\N}\left(\|u_k\|_{L^\infty(\R^n)}+|D^{s_k}u_k|(A)\right)<\infty\quad\text{for all open sets $A\subset\subset\Omega$}.
\]
Then, there exist a not relabeled subsequence and a function $u\in L^1(\Omega) \cap BV_{\rm loc}(\Omega)$ such that 
\[
u_k\to u\quad\text{strongly in $L^1(\Omega)$ as $k\to\infty$}.
\]
\end{proposition}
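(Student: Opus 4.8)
The plan is to reduce the statement to the compactness theory for classical $BV$ by exploiting the representation $\nabla^{s}u = \mathcal I^{1-s}Du$ available for $L^{\infty}$ functions, but carefully localized and with uniform-in-$k$ constants. First I would fix an arbitrary open set $A\subset\subset\Omega$ and choose a cut-off $\psi\in C^{1}_{c}(\Omega)$ with $\psi\equiv 1$ on $A$ and $0\le\psi\le 1$. Applying Lemma~\ref{lem:bvomega-rn2} with $p=\infty$ (so $p'=1$) gives
\[
|D^{s_{k}}(u_{k}\psi)|(\R^{n})\le |D^{s_{k}}u_{k}|(\Omega')+\frac{6\omega_{n}\mu_{s_{k}}}{s_{k}(1-s_{k})2^{s_{k}}}\|u_{k}\|_{L^{\infty}(\R^{n})}\,\|\psi\|_{L^{1}(\R^{n})}^{1-s_{k}}\|\nabla\psi\|_{L^{1}(\R^{n})}^{s_{k}},
\]
where $\Omega'=\supp\psi\subset\subset\Omega$. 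Since $\mu_{s_{k}}/(1-s_{k})\to n/\omega_{n}$ by~\eqref{eq:mus} and $s_{k}\to 1$, the prefactor stays bounded, so together with the hypothesis $\sup_{k}(\|u_{k}\|_{L^{\infty}}+|D^{s_{k}}u_{k}|(\Omega'))<\infty$ we obtain $\sup_{k}|D^{s_{k}}(u_{k}\psi)|(\R^{n})<\infty$ and also $\sup_{k}\|u_{k}\psi\|_{L^{\infty}(\R^{n})}<\infty$.

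Next I would upgrade this fractional bound to a classical $BV$ bound on $A$. The functions $w_{k}:=u_{k}\psi$ lie in $L^{\infty}(\R^{n})$ and have finite $s_{k}$-fractional variation on $\R^{n}$; by Proposition~\ref{prop:bvss0} (applied with $s=s_{k}$ and a fixed $s_{0}\in(0,1)$, legitimate once $s_{k}>s_{0}$, which holds for large $k$) they have uniformly bounded $s_{0}$-fractional variation on $\R^{n}$: indeed the constant in~\eqref{eq:Vss0} converges to a finite limit as $s_{k}\to 1^{-}$ by Remark~\ref{rem:unif}, and the factors $\|w_{k}\|_{L^{1}}^{(s_{k}-s_{0})/s_{k}}$, $|D^{s_{k}}w_{k}|(\R^{n})^{s_{0}/s_{k}}$ are controlled by the $L^{\infty}$-bound (times $\mathcal L^{n}(\Omega')$) and the fractional-variation bound just obtained. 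Here I must also verify $w_{k}\in L^{1}(\R^{n})$, which is immediate from the $L^{\infty}$-bound and compact support. So $\sup_{k}|D^{s_{0}}w_{k}|(\R^{n})<\infty$. Now I invoke the compact embedding of $BV^{s_{0}}$ into $L^{1}_{\mathrm{loc}}$: by~\cite[Theorem~4.9]{CS23-1} or the compactness results of~\cite{CS19}, a sequence bounded in $L^{\infty}\cap BV^{s_{0}}$ (uniformly, with supports in a fixed ball) is precompact in $L^{1}(\R^{n})$; in particular $w_{k}\to w_{A}$ strongly in $L^{1}(\R^{n})$ along a subsequence, and since $w_{k}=u_{k}$ on $A$ we get $u_{k}\to u_{A}:=w_{A}|_{A}$ strongly in $L^{1}(A)$. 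Applying Proposition~\ref{prop:lsc-Ds} on $A$ gives $|Du_{A}|(A)\le\liminf_{k}|D^{s_{k}}u_{k}|(A)<\infty$, so $u_{A}\in BV(A)$.

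Finally I would globalize and identify the limit. Taking an exhaustion $A_{1}\subset\subset A_{2}\subset\subset\cdots$ with $\bigcup_{m}A_{m}=\Omega$ and a diagonal subsequence, the $L^{1}(A_{m})$-limits are consistent, so they glue to a single $u\in BV_{\mathrm{loc}}(\Omega)$ with $u_{k}\to u$ strongly in $L^{1}_{\mathrm{loc}}(\Omega)$. To upgrade $L^{1}_{\mathrm{loc}}$ to $L^{1}(\Omega)$ convergence — needed when $\Omega$ is unbounded or $u$ is merely in $BV_{\mathrm{loc}}$ — I use the uniform $L^{\infty}$-bound: $\sup_{k}\|u_{k}\|_{L^{\infty}}\le C$ forces $\|u\|_{L^{\infty}(\Omega)}\le C$, hence $u\in L^{1}(\Omega)$ whenever $\mathcal L^{n}(\Omega)<\infty$; and the $L^{1}(A_{m})$-convergence together with the uniform $L^{\infty}$-bound upgrades, via dominated convergence, to $L^{1}(\Omega)$-convergence. (For Theorem~\ref{thm:main}(1) one applies this with $\Omega$ bounded Lipschitz, where $\mathcal F_{k}(u_{k})<\infty$ supplies, through the lower bound $\psi_{k}\ge\lambda|\cdot|$ in $\mathcal G(\lambda,\Lambda)$, a competitor $v_{k}\in\mathcal A^{s_{k}}(u_{k},\Omega)$ with $\lambda|D^{s_{k}}v_{k}|(\Omega)\le\mathcal F_{k}(u_{k})+1$, so $v_{k}\in L^{\infty}(\R^{n})$ with $\sup_{k}(\|v_{k}\|_{L^{\infty}}+|D^{s_{k}}v_{k}|(\Omega))<\infty$, and Proposition~\ref{prop:comp} applies to $v_{k}=u_{k}$ on $\Omega$.) The main obstacle is the second step: ensuring that \emph{all} constants entering the chain fractional variation on $\Omega$ $\rightsquigarrow$ fractional variation on $\R^{n}$ $\rightsquigarrow$ lower-order fractional variation $\rightsquigarrow$ compactness are uniform in $k$ as $s_{k}\to 1^{-}$, which is exactly what Remarks~\ref{rem:behaviour} and~\ref{rem:unif} and the limit~\eqref{eq:mus} are designed to guarantee; the rest is bookkeeping with cut-offs and diagonal extraction.
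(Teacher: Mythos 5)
Your proposal follows essentially the same route as the paper's proof: a cut-off to pass from the local fractional bound to a global one via Lemma~\ref{lem:bvomega-rn2} with $p=\infty$, a downgrade to a fixed $s_0$-fractional variation bound via Proposition~\ref{prop:bvss0} (with uniformity guaranteed by Remark~\ref{rem:unif} and~\eqref{eq:mus}), compactness in $L^1_{\rm loc}$ from the $BV^{s_0}$ theory of~\cite{CS19}, identification of the limit as locally $BV$ via Proposition~\ref{prop:lsc-Ds}, a diagonal argument over an exhaustion, and finally the uniform $L^\infty$ bound to upgrade to $L^1(\Omega)$-convergence. The only small slips are cosmetic: the opening sentence invokes the Riesz-potential representation $\nabla^s u=\mathcal I^{1-s}Du$, which in the end plays no role in your argument; the right-hand side of the cut-off estimate should read $|D^{s_k}u_k|(B)$ for an open $B$ with $\supp\psi\subset\subset B\subset\subset\Omega$ (the lemma is applied with the open set $B$, not with the compact set $\supp\psi$); and the correct reference for the compactness step is~\cite[Theorem~3.16]{CS19}, as your hedge already anticipates.
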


\begin{proof}
Let $(\Omega_j)_j$ be a sequence of open bounded sets with smooth boundaries such that
\[
\Omega_j\subset\subset\Omega_{j+1}\quad\text{for all $j\in\N$},\qquad \bigcup_{j\in\N}\Omega_j=\Omega,
\]
and let $\psi_j\in C^1_c(\Omega_{j+1})$, $j\in\N$, satisfy
\[
0\le \psi_j\le 1\quad\text{in $\Omega_{j+1}$},\qquad \psi_j=1\quad\text{in a neighborhood of $\Omega_j$}. 
\]
By using Lemma~\ref{lem:bvomega-rn2} with $p=\infty$, for all $k\in\N$ we have
\begin{equation}\label{eq:ukpsij-est}
|D^{s_k}(u_k\psi_j)|(\R^n)\le |D^{s_k}u_k|(\Omega_{j+1})+\frac{6\omega_n\mu_{s_k}}{s_k(1-s_k)2^{s_k}}\|u_k\|_{L^\infty(\R^n)}\|\psi_j\|_{L^1(\R^n)}^{1-s_k}\|\nabla\psi_j\|_{L^1(\R^n)}^{s_k}.
\end{equation}

We fix $s_0\in (0,1)$. By Proposition~\ref{prop:bvss0}, there exists $k_0\in\N$ such that $u_k$ has finite $s_0$-fractional variation in $\R^n$ for all $k \ge k_0$. Moreover, by using also Young's inequality with exponents $p = \frac{s_k}{s_0}$ and $p' = \frac{s_k}{s_k-s_0}$, Remark~\ref{rem:unif}, and the estimate~\eqref{eq:ukpsij-est}, we can find a constant $C > 0$, independent of $k\in\N$, such that for all $k \ge k_0$
\begin{align*}
|D^{s_0}(u_k\psi_j)|(\R^n)&\le C\left(\frac{s_k-s_0}{s_k}\|u_k\psi_j\|_{L^1(\R^n)}+\frac{s_0}{s_k}|D^{s_k}(u_k\psi_j)|(\R^n)\right)\\
&\le C\left(\frac{s_k-s_0}{s_k}\|u_k\psi_j\|_{L^1(\R^n)}+\frac{s_0}{s_k}|D^{s_k}u_k|(\Omega_{j+1})\right)\\
&\quad+C\frac{6s_0\omega_n\mu_{s_k}}{s_k^2(1-s_k)2^{s_k}}\|u_k\|_{L^\infty(\R^n)}\|\psi_j\|_{L^1(\R^n)}^{1-s_k}\|\nabla \psi_j\|_{L^1(\R^n)}^{s_k}.
\end{align*}
Thus, thanks to~\eqref{eq:mus}, we can find a constant $C_j > 0$, independent of $k\in\N$, such that for all $k \ge k_0$
\begin{align*}
|D^{s_0}(u_k\psi_j)|(\R^n)&\le C_j\left(\|u_k\|_{L^\infty(\R^n)}+|D^{s_k}u_k|(\Omega_{j+1})\right).
\end{align*}
We can then apply~\cite[Theorem~3.16]{CS19} to obtain the existence of a subsequence $(k_{h,j})_h$ and a function $u_j\in L^1(\R^n)$ satisfying
\[
u_{k_{h,j}}\psi_j\to u_j\quad\text{strongly in $L^1_{\rm loc}(\R^n)$ as $h\to\infty$}.
\]
In particular, we get
\[
u_{k_{h,j}}\to u_j\quad\text{strongly in $L^1(\Omega_j)$ as $h\to\infty$}. 
\]
Moreover, by Proposition~\ref{prop:lsc-Ds} we have
\begin{align*}
|Du_j|(\Omega_j)&\le \liminf_{h\to\infty}| D^{s_{k_{h,j}}}u_{k_{h,j}} |(\Omega_j)<\infty,
\end{align*}
which gives that $u_j\in BV(\Omega_j)$. By a diagonal argument, we can find a subsequence $(k_h)_h\subset\N$ and function $u\in BV_{\rm loc}(\Omega)$ such that $u_{k_h}\to u$ strongly in $L^1_{\rm loc}(\Omega)$ as $h\to\infty$. Finally, since $\sup_{k\in\N}\|u_k\|_{L^\infty(\R^n)}<\infty$, we deduce that $u\in L^1(\Omega)$ and $u_k\to u$ strongly in $L^1(\Omega)$ as $k\to\infty$.
\end{proof}

We can finally prove Theorem~\ref{thm:main}(1).

\begin{proof}[Proof of Theorem~\ref{thm:main}(1)]
Let $(u_k)_k\subset L^1(\Omega)$ be such that 
\[
\sup_{k\in\N}\mathcal F_k(u_k)<\infty.
\]
Then, since $(\psi_k)_k\subset\mathcal G(\lambda,\Lambda)$, for all $k\in\N$ there exists $v_k\in \mathcal A^{s_k}(u_k,\Omega)$ such that 
\[
\|v_k\|_{L^\infty(\R^n)}+|D^{s_k}v_k|(\Omega)\le \max_{i\in\{1,\dots,M\}}|c_i| +\frac{1}{\lambda}\left(\mathcal F_k(u_k)+1\right).
\]
By Proposition~\ref{prop:comp}, we can find a not relabeled subsequence and $u\in L^1(\Omega) \cap BV_{\rm loc}(\Omega)$ satisfying
\begin{equation}\label{eq:conv}
v_k=u_k\to u\quad\text{strongly in $L^1(\Omega)$ as $k\to\infty$}.
\end{equation}
Moreover, by Proposition~\ref{prop:lsc-Ds} we derive that 
\begin{equation}\label{eq:lsc}
|Du|(\Omega)\le \liminf_{k\to\infty}|D^{s_k}v_k|(\Omega)<\infty.
\end{equation}
By combining~\eqref{eq:conv} and~\eqref{eq:lsc}, we conclude that $u\in BV(\Omega;T)$ and that $u_k\to u$ strongly in $L^1(\Omega)$ as $k\to\infty$.
\end{proof}

\section{\texorpdfstring{$\Gamma$}{Gamma}-convergence}\label{sec:Gamma}

In this section, we prove the $\Gamma$-convergence result stated in Theorem~\ref{thm:main}(2). To this end, we first prove the lower bound in Proposition~\ref{prop:lower-bound}, and then the upper bound in Proposition~\ref{prop:upper-bound}. 

We begin by recalling the following $\Gamma$-convergence result for the local case. Let $\Omega\subseteq\R^n$ be an open set and let $\mathcal A(\Omega)$ denote the collection of all open subsets $A\subseteq\Omega$. We consider the family of localized functionals $\mathcal E_k \colon L^1(\Omega)\times \mathcal A(\Omega)\to [0,\infty]$, defined as
\begin{equation}\label{eq:Ek}
\mathcal E_k(u,A) \coloneqq 
\begin{cases}
\displaystyle\int_A \psi_k\left(y,\frac{\de Du}{\de |Du|}(y)\right)\,\de |Du|(y)
&\text{if $A\in\mathcal A(\Omega)$ and $u\in BV(A;T)$},\\
\displaystyle\infty&\text{otherwise}.
\end{cases}
\end{equation}

\begin{theorem}\label{thm:local-gamma}
Let $(\psi_k)_k \subset \mathcal G(\lambda,\Lambda)$. Then, there exists a not relabeled subsequence such that for all $x\in\R^n$, $i,j\in\{1,\dots,M\}$, and $\nu\in \mathbb S^{n-1}$ we have 
\begin{equation}\label{eq:psi-local}
\psi'(x, c_i,c_j ,\nu) = \psi''(x, c_i,c_j ,\nu) \eqqcolon \psi_0 (x, c_i,c_j ,\nu),
\end{equation}
where $\psi'$ and $\psi''$ are computed along this subsequence. Moreover, for this subsequence, given an open and bounded set $\Omega \subset\R^n$ with Lipschitz boundary, the sequence of functionals $(\mathcal E_k(\,\cdot\,,\Omega))_k$ defined by~\eqref{eq:Ek} $\Gamma$-converges with respect to the strong topology of $L^1(\Omega)$ to the functional $\mathcal F_0 (\,\cdot\,, \Omega)$, where the localized functional $\mathcal F_0 \colon L^1(\Omega)\times \mathcal A(\Omega)\to [0,\infty]$ is defined by
\[
\mathcal F_0 (u,A) \coloneqq
\begin{cases}
\displaystyle\int_{S_u \cap A} \psi_0 \big(y, u^+(y), u^-(y), \nu_u(y)\big)\, \de \mathcal H^{n-1}(y) & \text{if $A\in\mathcal A(\Omega)$ and $u\in BV(A;T)$},\\
\infty & \text{otherwise}.
\end{cases}
\]
\end{theorem}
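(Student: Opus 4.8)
The statement is, after a reformulation, a $\Gamma$-compactness-and-representation theorem for interfacial energies on partitions, so the plan is to reduce it to the local theory of~\cite{AB90-2} (or, for the present slightly broader class of densities, \cite{FS20}) and then identify the limit density. First observe that, since $u$ is $T$-valued, for $u\in BV(A;T)$ one has $Du=(u^+-u^-)\nu_u\,\mathcal H^{n-1}\lfloor S_u$, whence $\mathcal E_k(u,A)=\int_{S_u\cap A}\psi_k\big(y,(u^+(y)-u^-(y))\nu_u(y)\big)\,\de\mathcal H^{n-1}(y)$ by positive $1$-homogeneity; thus $(\mathcal E_k(\cdot,\cdot))_k$ is a sequence of interfacial energies with integrands continuous in the space variable and convex, positively $1$-homogeneous (hence $BV$-elliptic) in the direction variable, subject to the uniform two-sided bound $\lambda\,|Du|(A)\le\mathcal E_k(u,A)\le\Lambda\,|Du|(A)$. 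I would verify that this places $(\mathcal E_k)_k$ within the hypotheses of the cited local theorems and quote their conclusion; the rest of the plan recalls the structure of the argument.

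\emph{Step 1 (localization and $\Gamma$-compactness).} Regard each $\mathcal E_k$ as a functional on $L^1_{\mathrm{loc}}(\R^n)\times\{\text{bounded open sets}\}$ (set equal to $+\infty$ off $BV(A;T)$). By the compactness of $\Gamma$-convergence together with a diagonal argument over a countable base of the bounded open subsets of $\R^n$ and an exhaustion of $\R^n$ by balls, extract a not relabeled subsequence — chosen once and for all, independently of $\Omega$ — along which $\mathcal E_k(\cdot,A)$ $\Gamma$-converges in $L^1$ to some $\mathcal F(\cdot,A)$ for every $A$ in the base. The \emph{fundamental estimate} — obtained by cutting and pasting two competitors over two overlapping open sets across a thin shell, the extra interfacial energy being controlled by $\Lambda$ times the $\mathcal H^{n-1}$-measure of the jump set introduced by the gluing — gives inner regularity together with sub- and super-additivity of $A\mapsto\mathcal F(u,A)$, so that by the De Giorgi–Letta criterion $\mathcal F(u,\cdot)$ extends to the trace on open sets of a Radon measure, and the $\Gamma$-limit extends to all bounded open subsets of $\R^n$.

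\emph{Step 2 (integral representation and density identification).} Since $\mathcal F$ is local, $L^1$-lower semicontinuous, a measure in its second argument, and obeys the inherited bound $\lambda\,|Du|(A)\le\mathcal F(u,A)\le\Lambda\,|Du|(A)$, the integral representation theorem for interfacial energies on $BV(\,\cdot\,;T)$ (as in~\cite{AB90-2, FS20}) yields a Borel density $\bar\psi$, with $\lambda|a-b|\le\bar\psi(x,a,b,\nu)\le\Lambda|a-b|$ and symmetric under $(a,b,\nu)\mapsto(b,a,-\nu)$, such that $\mathcal F(u,A)=\int_{S_u\cap A}\bar\psi(x,u^+(x),u^-(x),\nu_u(x))\,\de\mathcal H^{n-1}(x)$; in particular $\mathcal F(\cdot,\Omega)$ has the form in~\eqref{eq:Fhom} with $\psi_0=\bar\psi$. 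The density is characterized by the blow-up formula $\bar\psi(x_0,c_i,c_j,\nu)=\limsup_{r\to0^+}r^{1-n}\,\mathcal M^{\mathcal F}(c_i,c_j,Q_r^\nu(x_0))$, where $\mathcal M^{\mathcal F}(c_i,c_j,Q_r^\nu(x_0))$ is the infimum of $\mathcal F(v,Q_r^\nu(x_0))$ over $v\in BV(Q_r^\nu(x_0);T)$ with $v=u_{i,j}^{x_0,\nu}$ near $\partial Q_r^\nu(x_0)$, $u_{i,j}^{x_0,\nu}$ as in~\eqref{eq:uij}. On the other hand, $\mathcal E_k(\cdot,Q_r^\nu(x_0))$ $\Gamma$-converges to $\mathcal F(\cdot,Q_r^\nu(x_0))$ along the subsequence, and the compatibility of $\Gamma$-convergence with Dirichlet boundary data — a consequence of equicoercivity (competitors are bounded in $BV(Q_r^\nu(x_0))$ via the $L^\infty$-bound on $T$ together with the energy bound) and of the fundamental estimate, see~\cite{DM93} — implies $m_k(c_i,c_j,Q_r^\nu(x_0))\to\mathcal M^{\mathcal F}(c_i,c_j,Q_r^\nu(x_0))$ as $k\to\infty$ along the subsequence. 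Hence $\liminf_k m_k=\limsup_k m_k=\mathcal M^{\mathcal F}$, and inserting this into~\eqref{eq:cell1}–\eqref{eq:cell2} gives $\psi'(x_0,c_i,c_j,\nu)=\psi''(x_0,c_i,c_j,\nu)=\bar\psi(x_0,c_i,c_j,\nu)$ for all $x_0\in\Omega$, $i,j\in\{1,\dots,M\}$, $\nu\in\mathbb S^{n-1}$. Therefore $\mathcal E_k(\cdot,\Omega)$ $\Gamma$-converges to $\mathcal F(\cdot,\Omega)=\mathcal F_0(\cdot,\Omega)$ with $\psi_0=\psi'=\psi''$, as claimed.

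I expect the principal difficulty to lie in the boundary-value compatibility used in Step 2 (and, in the same vein, in the fundamental estimate of Step 1): for $T$-valued competitors one cannot impose the Dirichlet datum $u_{i,j}^{x_0,\nu}$ by a smooth cut-off, and must instead select, via a mean-value argument over a family of concentric cubes within a thin annulus around $\partial Q_r^\nu(x_0)$, a ``good'' slice on which the trace of a competing sequence $u_k$ already nearly coincides, in the $\mathcal H^{n-1}$-sense, with that of $u_{i,j}^{x_0,\nu}$ — the discrepancy being controlled by $\|u_k-u_{i,j}^{x_0,\nu}\|_{L^1}$ divided by the annulus width, hence negligible after first letting $k\to\infty$ and then shrinking the annulus, the remaining contribution of $u_{i,j}^{x_0,\nu}$'s own interface inside the annulus being $O\big(\Lambda\,\mathcal H^{n-1}(\text{annulus})\big)$. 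This cut-and-paste near the interface is exactly the technical core of~\cite{AB90-2, FS20}, which one either reproduces or invokes.
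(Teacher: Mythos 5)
Your proposal is correct and follows essentially the same route as the paper: the global method for interfacial energies on $T$-valued $BV$ functions ($\Gamma$-compactness via the fundamental estimate and De Giorgi--Letta, integral representation, boundary-value compatibility of minimum problems, and blow-up identification of the density). The paper itself does not re-derive this machinery; it simply invokes \cite[Theorem~4.2]{AB90-2} for the periodic homogenization case and \cite[Theorem~2.3, Lemma~6.3, Lemma~7.5]{FS20} for the general nonperiodic case (with the two noted cosmetic differences: balls versus cubes in the cell formulas, and a simpler version of \cite[Lemma~7.5]{FS20} since compactness in $BV(\,\cdot\,;T)$ is immediate), then extracts a single subsequence valid for all $x\in\R^n$ by applying \cite[Theorem~2.3]{FS20} with $\Omega=B_h$ and diagonalizing; your sketch recapitulates the internal structure of those references rather than quoting them.
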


In the case of the homogenization of a density $\psi\in\mathcal G(\lambda,\Lambda)$ that is periodic in $x\in\R^n$ with respect a fixed lattice $L$, a proof of Theorem~\ref{thm:local-gamma} can be found in~\cite[Theorem~4.2]{AB90-2}. In the general (nonperiodic) case, we refer to~\cite[Theorem~2.3, Lemma~6.3, and Lemma~7.5]{FS20}, where this result is proved in the more general setting of functionals defined on piecewise rigid functions. The only difference is that the cell formulas~\eqref{eq:cell1} and~\eqref{eq:cell2} are defined using balls instead of cubes, and that in the present setting the analog of the estimate in~\cite[Lemma~7.5]{FS20} holds in any space dimension and is much simpler as compactness in the space of piecewise constant function with values in $T$ is obtained in a straightforward way. The existence of a subsequence for which~\eqref{eq:psi-local} holds for all $x\in\R^n$ can be obtained by applying~\cite[Theorem~2.3]{FS20} with $\Omega=B_h$, for $h\in\N$, and then using a diagonal argument. We point out that, by construction, for all $x\in\R^n$, $i,j\in\{1,\dots,M\}$, and $\nu\in \mathbb S^{n-1}$ we have 
\[
\psi_0(x, c_i,c_j ,\nu)=\psi_0(x, c_j,c_i ,-\nu)=\hat\psi_0(x, (c_i-c_j) \nu),
\]
for a function $\hat\psi_0\colon\R^n\times\R^n\to \R$ which is positively $1$-homogeneous in the second variable. Moreover, thanks to~\cite[Theorem~5.11]{AFP00} (see also~\cite[Theorem~5.14]{AFP00} and~\cite[Proposition~3.3]{EKM}) the function $\hat\psi_0$ is convex in the second variable. By arguing as in~\cite[Lemma~A.7]{CDMSZ}, we further obtain that
\begin{equation}\label{eq:psi0}
\lambda|\xi|\le \hat\psi_0(x,\xi)\le \Lambda|\xi|\quad\text{for all $(x,\xi)\in\R^n\times\R^n$}.
\end{equation}
In particular, if $\hat\psi_0$ is also continuous on $\R^n\times \R^n$, then $\hat\psi_0\in\mathcal G(\lambda,\Lambda)$.

From now on, we fix the subsequence given by Theorem~\ref{thm:local-gamma} for which~\eqref{eq:psi-local} holds. 

\subsection{\texorpdfstring{$\Gamma$}{Gamma}-liminf inequality} 

We begin by recalling the following classical result for the space of functions of bounded variation on an open bounded set $\Omega\subset\R^n$ with Lipschitz boundary. Since the proof is standard, it is postponed to Appendix~\ref{app:a}.

\begin{lemma}\label{lem:BVO-comp}
Let $\Omega\subset\R^n$ be a bounded open set.
\begin{itemize}
\item[(1)] Assume that $\Omega$ has a Lipschitz boundary. Let $(w_k)_k\subset BV(\Omega)$ be such that 
\[
\sup_{k\in\N}|D w_k|(\Omega)<\infty. 
\]
Then, there exist a not relabeled subsequence, a sequence $(a_k)_k\subset\R$, and a function $w\in BV(\Omega)$ satisfying
\begin{align}
&w_k-a_k\to w & &\text{strongly in $L^1(\Omega)$ as $k\to\infty$},\label{eq:BVcon1}\\
&Dw_k\rightharpoonup Dw & &\text{weakly* in $\mathcal M_b(\Omega;\R^n)$ as $k\to\infty$}.\label{eq:BVcon2}
\end{align}
\item[(2)] Let $(w_k)_k\subset BV_{\rm loc}(\Omega)$ be such that 
\[
\sup_{k\in\N}|D w_k|(A)<\infty\quad\text{for all open sets $A\subset\subset \Omega$}. 
\]
Then, there exist a not relabeled subsequence and a function $w\in BV_{\rm loc}(\Omega)$ satisfying
\begin{align}
Dw_k\rightharpoonup Dw \quad\text{weakly* in $\mathcal M(\Omega;\R^n)$ as $k\to\infty$}.\label{eq:BVcon3}
\end{align}
\end{itemize}
\end{lemma}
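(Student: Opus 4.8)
The statement to prove is Lemma~\ref{lem:BVO-comp}, a standard compactness result for $BV$ and $BV_{\rm loc}$. Let me sketch how I'd prove it.

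\textit{Strategy.} The plan is to reduce both statements to the classical Rellich--Kondrachov compactness in $BV$ together with lower semicontinuity of the total variation under $L^1$-convergence, the second part following from the first by exhaustion and a diagonal argument.

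\textit{Part (1).} First I would set $a_k\coloneqq\frac{1}{\mathcal L^n(\Omega)}\int_\Omega w_k\,\de y$ and invoke the Poincar\'e--Wirtinger inequality on the bounded Lipschitz domain $\Omega$, which gives $\|w_k-a_k\|_{L^1(\Omega)}\le C|Dw_k|(\Omega)$ with $C=C(\Omega)$. Together with $\sup_k|Dw_k|(\Omega)<\infty$ this shows that $(w_k-a_k)_k$ is bounded in $BV(\Omega)$. By the compact embedding $BV(\Omega)\hookrightarrow\hookrightarrow L^1(\Omega)$ (see~\cite{AFP00}) there exist a not relabeled subsequence and $w\in L^1(\Omega)$ with $w_k-a_k\to w$ strongly in $L^1(\Omega)$, which is~\eqref{eq:BVcon1}. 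Lower semicontinuity of the total variation under $L^1$-convergence yields $|Dw|(\Omega)\le\liminf_k|D(w_k-a_k)|(\Omega)=\liminf_k|Dw_k|(\Omega)<\infty$, so $w\in BV(\Omega)$. Finally, since $(Dw_k)_k=(D(w_k-a_k))_k$ is bounded in $\mathcal M_b(\Omega;\R^n)$, up to a further subsequence it converges weakly* to some $\mu\in\mathcal M_b(\Omega;\R^n)$; testing against $\Psi\in C^1_c(\Omega;\R^n)$ and using the $L^1$-convergence $w_k-a_k\to w$ identifies $\mu=Dw$, which is~\eqref{eq:BVcon2}.

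\textit{Part (2).} Here I would exhaust $\Omega$ by bounded open sets $\Omega_j$ with smooth boundary satisfying $\Omega_j\subset\subset\Omega_{j+1}$ and $\bigcup_j\Omega_j=\Omega$, as in the proof of Proposition~\ref{prop:comp}. For each fixed $j$ the hypothesis gives $\sup_k|Dw_k|(\Omega_j)<\infty$, so part~(1) applies on $\Omega_j$: up to a subsequence there are constants $(a_k^j)_k\subset\R$ and $w^j\in BV(\Omega_j)$ with $w_k-a_k^j\to w^j$ in $L^1(\Omega_j)$ and $Dw_k\rightharpoonup Dw^j$ weakly* in $\mathcal M_b(\Omega_j;\R^n)$. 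A diagonal argument produces a single not relabeled subsequence for which this holds simultaneously for all $j\in\N$. To glue the local limits, note that for $j'\le j$ both $w_k-a_k^{j'}$ and $w_k-a_k^j$ converge in $L^1(\Omega_{j'})$, so the real sequence $a_k^{j'}-a_k^j$ converges in $L^1(\Omega_{j'})$; since an $L^1$-convergent sequence of constants on a set of finite positive measure has a constant limit, we get $w^j-w^{j'}=b_{j,j'}$ on $\Omega_{j'}$ for some $b_{j,j'}\in\R$, and these constants satisfy the cocycle relation $b_{j,1}=b_{j,j'}+b_{j',1}$. Setting $w\coloneqq w^j-b_{j,1}$ on $\Omega_j$ therefore defines a function $w\in BV_{\rm loc}(\Omega)$ with $Dw=Dw^j$ on each $\Omega_j$. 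Given $\Psi\in C_c(\Omega;\R^n)$, choosing $j$ with $\supp\Psi\subset\Omega_j$ and using the weak* convergence on $\Omega_j$ gives $\int_\Omega\Psi\cdot\de Dw_k\to\int_\Omega\Psi\cdot\de Dw$, which is~\eqref{eq:BVcon3}.

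\textit{Main obstacle.} The argument is entirely standard; the only point needing care is the gluing step in part~(2), namely reconciling the subsequential limits $w^j$, which are determined only up to additive constants, into a single function on $\Omega$. This is handled by the elementary observation that a sequence of real constants converging in $L^1$ on a set of finite positive measure converges to a constant, together with the resulting cocycle identity for the $b_{j,j'}$.
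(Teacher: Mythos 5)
Your Part (1) is essentially the paper's proof: Poincar\'e--Wirtinger yields a bound $\|w_k-a_k\|_{L^1(\Omega)}\le C|Dw_k|(\Omega)$, giving a uniform $BV(\Omega)$-bound for $w_k-a_k$, and the conclusion follows from the compact embedding $BV(\Omega)\hookrightarrow\hookrightarrow L^1(\Omega)$ together with identification of the weak* limit of $Dw_k$.

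In Part (2) you and the paper diverge in how the local limits are reconciled. You extract, for each $j$, a subsequential limit $w^j\in BV(\Omega_j)$ of $w_k-a_k^j$ and then glue these via the observation that $a_k^{j}-a_k^{j'}$ converges (constants converging in $L^1$ on a set of positive finite measure converge in $\R$), which forces the cocycle identity $b_{j,1}=b_{j,j'}+b_{j',1}$ and lets you define $w\coloneqq w^j-b_{j,1}$ consistently. The paper instead commits to a single sequence of constants at the outset: it takes $a_k\coloneqq a_{k,1}$, the Poincar\'e constants on $\Omega_1$, and then shows by a triangle-inequality estimate (comparing $a_k$ with $a_{k,j}$ through their $L^1(\Omega_1)$-norms) that $w_k-a_k$ is uniformly bounded in $BV(\Omega_j)$ for \emph{every} $j$. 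After that, the diagonal argument directly produces one function $w\in BV_{\rm loc}(\Omega)$ as the common $L^1_{\rm loc}$-limit, with no gluing or cocycle bookkeeping required. Both routes are correct; the paper's normalization-first argument is a bit slicker because it avoids the extra reconciliation step, while your approach makes the gluing mechanism explicit and is a pattern that generalizes to situations where a uniform Poincar\'e normalization is not available.
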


As observed in Proposition~\ref{prop:Riesz}, given $s \in (0,1)$ and a function $u\in C_c^1(\R^n)$, there exists a function $v\in C^1(\R^n)$, given explicitly by $v = \mathcal I^{1-s} u$, such that
\begin{equation}\label{eq:nablas-nabla}
\nabla^s u = \nabla v\quad\text{in $\R^n$}.
\end{equation}
This observation plays a central role in proving $\Gamma$-convergence results for energies involving the fractional gradient, by exploiting the known $\Gamma$-convergence results for analogous energies depending on the classical gradient, see e.g.~\cite{KS22}. We now want to show that property~\eqref{eq:nablas-nabla} can be extended to the setting of the $s$-fractional variation.

\begin{proposition}\label{prop:rep}
Let $\Omega\subset\R^n$ be a bounded open set with Lipschitz boundary. Assume that $u\in L^\infty(\R^n)$ has finite $s$-fractional variation in $\Omega$. Then, there exists a function $w\in BV(\Omega)$ such that 
\[
Dw=D^s u\quad\text{in $\mathcal M_b(\Omega;\R^n)$}.
\]
\end{proposition}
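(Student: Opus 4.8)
The plan is to reduce the statement to the smooth case treated in Proposition~\ref{prop:Riesz} via an approximation/compactness argument. First I would invoke Proposition~\ref{prop:app} (with $A \subset\subset \Omega$ exhausting $\Omega$, or directly on $\Omega$ if one works with $u$ restricted to nested smooth subdomains) to obtain a sequence $(u_k)_k \subset C_c^\infty(\R^n)$ with $u_k \to u$ strongly in $L^1_{\rm loc}(\R^n)$ and $D^{s} u_k = \nabla^{s} u_k\, \mathcal L^n \rightharpoonup D^{s} u$ weakly* in $\mathcal M(\Omega;\R^n)$. For each $k$, Proposition~\ref{prop:Riesz} gives $w_k := \mathcal I^{1-s} u_k \in C^1(\R^n)$ with $\nabla w_k = \nabla^{s} u_k$ in $\R^n$, so that $D w_k = D^{s} u_k$ as measures on $\Omega$. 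The weak* convergence of $D^{s} u_k = D w_k$ forces $\sup_k |D w_k|(A) < \infty$ for every $A \subset\subset \Omega$, hence (using Lemma~\ref{lem:BVO-comp}(2), and then Lemma~\ref{lem:BVO-comp}(1) on $\Omega$ itself exploiting the Lipschitz boundary together with the uniform bound $\sup_k |D^{s} u|(\Omega) < \infty$ coming from the finite $s$-fractional variation) there exist constants $a_k \in \R$ and $w \in BV(\Omega)$ with $w_k - a_k \to w$ in $L^1(\Omega)$ and $D w_k \rightharpoonup D w$ weakly* in $\mathcal M_b(\Omega;\R^n)$.

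Combining the two weak* limits, $D w = \lim_k D w_k = \lim_k D^{s} u_k = D^{s} u$ in $\mathcal M_b(\Omega;\R^n)$, which is exactly the claimed identity; in particular $w \in BV(\Omega)$ since $D^{s} u \in \mathcal M_b(\Omega;\R^n)$. The main subtlety is making sure one genuinely lands in $BV(\Omega)$ and not merely $BV_{\rm loc}(\Omega)$: the potentials $w_k = \mathcal I^{1-s} u_k$ need not be uniformly bounded in $L^1(\Omega)$, which is why one must subtract the constants $a_k$ and why the Lipschitz regularity of $\partial\Omega$ is needed — it is what upgrades the interior compactness of Lemma~\ref{lem:BVO-comp}(2) to the global Poincaré–Wirtinger-type statement of Lemma~\ref{lem:BVO-comp}(1), giving $w \in BV(\Omega)$ with finite total variation. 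A second point to be careful about is that $\mathcal I^{1-s} u_k$ is well-defined: since $u_k \in C_c^\infty(\R^n)$, condition~\eqref{eq:Ia} is trivially satisfied, so Proposition~\ref{prop:mizuta} and Proposition~\ref{prop:Riesz} apply.

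I expect the only real obstacle to be the bookkeeping in passing from a fixed $A \subset\subset \Omega$ to all of $\Omega$: one should either run the argument on an exhausting sequence $\Omega_j \uparrow \Omega$ and patch the limits (they must agree on overlaps since the limit measure $D^{s} u$ is intrinsic), or observe that $u \in L^\infty(\R^n)$ together with finiteness of $V^{s}(u,\Omega)$ already places $u$ in a setting where Proposition~\ref{prop:struct} supplies $D^{s} u \in \mathcal M_b(\Omega;\R^n)$ globally, so the uniform bound $\sup_k |D w_k|(\Omega') \le |D^{s} u|(\Omega) + o(1)$ holds uniformly up to the boundary by weak* lower semicontinuity applied on slightly larger sets — at which point Lemma~\ref{lem:BVO-comp}(1) applies directly on $\Omega$. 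Everything else is a routine combination of the stated propositions.
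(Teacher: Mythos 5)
Your overall architecture matches the paper's proof exactly: mollify via Proposition~\ref{prop:app}, pass to $w_k=\mathcal I^{1-s}u_k$ so that $Dw_k=D^s u_k$ (the paper additionally cites Proposition~\ref{prop:Ds-nablas} alongside Proposition~\ref{prop:Riesz} to identify the measures), use Banach--Steinhaus to get local mass bounds, apply Lemma~\ref{lem:BVO-comp}(2), and identify the limiting measure with $D^s u$.

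The step that does not survive scrutiny is your attempt to get a \emph{global} bound $\sup_k|Dw_k|(\Omega)<\infty$ so as to invoke Lemma~\ref{lem:BVO-comp}(1). You propose to derive ``$\sup_k |D w_k|(\Omega') \le |D^{s} u|(\Omega) + o(1)$ ... by weak* lower semicontinuity,'' but lower semicontinuity of the total variation under weak* convergence runs the opposite way: it yields $|D^s u|(\Omega')\le\liminf_k|Dw_k|(\Omega')$, giving a \emph{lower} bound on the approximants, not an upper one. Weak* convergence gives no uniform bound up to the boundary, and indeed the paper never claims one. The correct order of operations is the paper's: from Lemma~\ref{lem:BVO-comp}(2) one gets $w\in BV_{\rm loc}(\Omega)$ with $Dw_k\rightharpoonup Dw$, hence $Dw=D^s u$ in $\mathcal M_b(\Omega;\R^n)$; since $D^s u$ is a finite measure this already gives $|Dw|(\Omega)<\infty$, and the Lipschitz boundary then enters solely through the Poincar\'e inequality applied to the single limit function $w$, promoting $w\in BV_{\rm loc}(\Omega)$ with finite total variation to $w\in L^1(\Omega)$, i.e.\ $w\in BV(\Omega)$. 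Your alternative suggestion of running the argument on an exhausting sequence and patching is a detour: it still bottoms out in the same Poincar\'e step, and is not a genuine second route.
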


\begin{proof}
Let $u \in L^\infty(\R^n)$ be a function with finite $s$-fractional variation in $\Omega$, and let $(u_k)_k \subset C_c^\infty(\R^n)$ be the sequence provided by Proposition~\ref{prop:app}. We define the sequence of functions
\[
w_k \coloneqq \mathcal I^{1-s}u_k,\quad k\in\N,
\]
where $\mathcal I^{1-s}$ denotes the Riesz potential of order $1-s$.

Thanks to Propositions~\ref{prop:Riesz} and~\ref{prop:Ds-nablas} we have 
\[
D w_k=D^s u_k\rightharpoonup D^su\quad\text{weakly* in $\mathcal M(\Omega;\R^n)$ as $k\to\infty$}.
\]
By the Banach-Steinhaus theorem, it follows that
\[
\sup_{k\in\N} |D w_k|(A) <\infty \quad \text{for all open sets $A \subset\subset \Omega$}.
\]
Hence, we can apply Lemma~\ref{lem:BVO-comp}(2) to obtain the existence of a not relabeled subsequence and a function $w\in BV_{\rm loc}(\Omega)$ such that 
\[
Dw_k\rightharpoonup Dw\quad\text{weakly* in $\mathcal M(\Omega;\R^n)$ as $k\to\infty$}.
\]
Hence,
\[
Dw =D^s u\quad\text{in $\mathcal M_{b}(\Omega;\R^n)$},
\]
which yields $|Dw|(\Omega)<\infty$. Since $\Omega\subset\R^n$ is a bounded open set with Lipschitz boundary, by Poincare's inequality we conclude that $w\in BV(\Omega)$.
\end{proof}

As a consequence of Proposition~\ref{prop:comp} and Proposition~\ref{prop:rep}, we derive the following result, which will be used in the proof of the $\Gamma$-liminf inequality.

\begin{lemma}\label{lem:comp2}
Let $\Omega\subset\R^n$ be a bounded open set with Lipschitz boundary. Let $(s_k)_k\subset (0,1)$ be such that $s_k\to 1$ as $k\to\infty$. Assume that $(u_k)_k\subset L^\infty(\R^n)$ satisfies
\[
\sup_{k\in\N}\left(\|u_k\|_{L^\infty(\R^n)}+|D^{s_k}u_k|(\Omega)\right)<\infty.
\]
Then, there exist a not relabeled subsequence and a sequence $(w_k)_k \subset BV(\Omega)$ such that 
\begin{align*}
&w_k-u_k\to 0& &\text{strongly in $L^1(\Omega)$ as $k\to\infty$},\\
& Dw_k=D^{s_k}u_k& &\text{in $\mathcal M_b(\Omega;\R^n)$ for all $k\in\N$}.
\end{align*}
\end{lemma}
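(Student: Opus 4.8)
The plan is to combine the compactness statement of Proposition~\ref{prop:comp} with the representation result of Proposition~\ref{prop:rep}. First I would apply Proposition~\ref{prop:comp} with the given $(u_k)_k$ (noting that the uniform bound $\sup_k(\|u_k\|_{L^\infty(\R^n)}+|D^{s_k}u_k|(\Omega))<\infty$ on the bounded set $\Omega$ in particular yields the localized bound $\sup_k(\|u_k\|_{L^\infty(\R^n)}+|D^{s_k}u_k|(A))<\infty$ for every $A\subset\subset\Omega$). This produces a not relabeled subsequence and a limit $u\in L^1(\Omega)\cap BV_{\mathrm{loc}}(\Omega)$ with $u_k\to u$ strongly in $L^1(\Omega)$. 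Along this subsequence each $u_k$ still lies in $L^\infty(\R^n)$ and has finite $s_k$-fractional variation in $\Omega$, so Proposition~\ref{prop:rep} applies to each $u_k$ individually: for every $k$ there is $w_k\in BV(\Omega)$ with $Dw_k=D^{s_k}u_k$ in $\mathcal M_b(\Omega;\R^n)$. This already gives the second asserted property.

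It remains to arrange the normalization so that $w_k-u_k\to 0$ in $L^1(\Omega)$. Since $Dw_k=D^{s_k}u_k$, the bound on the $s_k$-fractional variation gives $\sup_k|Dw_k|(\Omega)<\infty$, so by Lemma~\ref{lem:BVO-comp}(1) (using that $\Omega$ is bounded with Lipschitz boundary) there are constants $(a_k)_k\subset\R$ and $w\in BV(\Omega)$ with $w_k-a_k\to w$ strongly in $L^1(\Omega)$ and $Dw_k\rightharpoonup Dw$ weakly* in $\mathcal M_b(\Omega;\R^n)$, up to a further not relabeled subsequence. Replacing $w_k$ by $w_k-a_k$ (which does not change $Dw_k$, hence preserves $Dw_k=D^{s_k}u_k$) we may assume $w_k\to w$ strongly in $L^1(\Omega)$. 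On the other hand $u_k\to u$ strongly in $L^1(\Omega)$, and the identification $Dw_k\rightharpoonup Dw$ together with $D^{s_k}u_k\rightharpoonup D u$ (which follows, e.g., from Proposition~\ref{prop:lsc-Ds}-type arguments, or more directly since $Dw_k=D^{s_k}u_k$ and $Dw_k\rightharpoonup Dw$ and $w_k\to w$, $u_k\to u$ force $D^{s_k}u_k\rightharpoonup Du$ once we know the limit measures agree) gives $Dw=Du$ in $\mathcal M_b(\Omega;\R^n)$. Since $\Omega$ is connected we may — after adding a further constant to each $w_k$ — assume $w=u$; then $w_k-u_k\to w-u=0$ strongly in $L^1(\Omega)$, which is the first asserted property. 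Here it is convenient to observe that the construction in Proposition~\ref{prop:rep} already delivers $Dw=D^s u$, so in fact one can choose the representatives so that the weak* limit of $Dw_k$ is $Du$ from the outset; the only genuine freedom being exploited is the additive constant, which is harmless because it does not affect $Dw_k$.

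The main obstacle is purely one of bookkeeping rather than of substance: Proposition~\ref{prop:rep} is stated for a single function and determines $w_k$ only up to an additive constant, so one has to be careful to choose the constants coherently in $k$ so that the sequence $(w_k)_k$ actually converges in $L^1(\Omega)$ to the same limit $u$ as $(u_k)_k$. This is handled by the compactness in Lemma~\ref{lem:BVO-comp}(1) combined with the a priori $L^1$-convergence of $u_k$ to $u$; connectedness of $\Omega$ (guaranteed by the Lipschitz-boundary, bounded hypothesis after restricting to a connected component, or simply assumed as part of the setup) then pins down the constant uniquely. No new analytic input beyond Propositions~\ref{prop:comp} and~\ref{prop:rep} and Lemma~\ref{lem:BVO-comp} is needed.
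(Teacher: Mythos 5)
Your overall strategy is the same as the paper's: combine Proposition~\ref{prop:comp} (to extract a subsequence with $u_k\to u$ in $L^1(\Omega)$), Proposition~\ref{prop:rep} (to produce $w_k\in BV(\Omega)$ with $Dw_k=D^{s_k}u_k$), and Lemma~\ref{lem:BVO-comp}(1) (to normalize additive constants). The step you cannot simply wave through, however, is the identification $Dw=Du$. You write that this follows from ``Proposition~\ref{prop:lsc-Ds}-type arguments, or more directly since $Dw_k=D^{s_k}u_k$ and $Dw_k\rightharpoonup Dw$ and $w_k\to w$, $u_k\to u$ force $D^{s_k}u_k\rightharpoonup Du$ once we know the limit measures agree''. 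The parenthetical is circular: ``once we know the limit measures agree'' is precisely what must be proved. Proposition~\ref{prop:lsc-Ds} only gives the lower semicontinuity inequality $|Du|(\Omega)\le\liminf_k|D^{s_k}u_k|(\Omega)$ (which does show $u\in BV(\Omega)$, a step you leave implicit), not the weak* convergence $D^{s_k}u_k\rightharpoonup Du$. The fractional gradient is a genuinely nonlocal operator, so $u_k\to u$ in $L^1(\Omega)$ together with weak* convergence of $D^{s_k}u_k$ does not automatically identify the limit measure with $Du$.

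What closes this gap in the paper is a duality computation: for any fixed $\Psi\in C^\infty_c(\Omega;\R^n)$ one passes from $\int_\Omega\Psi\cdot\de D^{s_k}u_k=-\int_{\R^n}u_k\div^{s_k}\Psi\,\de y$ to $-\int_\Omega u\div\Psi\,\de y=\int_\Omega\Psi\cdot\de Du$ using two facts simultaneously: the uniform bound $\sup_k\|u_k\|_{L^\infty(\R^n)}<\infty$ (to control the error over all of $\R^n$), and the nontrivial convergence $\|\div^{s_k}\Psi-\div\Psi\|_{L^1(\R^n)}\to 0$, which is imported from~\cite[Proposition~4.4]{CS23-1}. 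Since $D^{s_k}u_k=Dw_k\rightharpoonup Dw$, this gives $Dw=Du$ in $\mathcal M_b(\Omega;\R^n)$, hence $w=u+a$ for a constant $a$, and then defining $w_k-a_k+a$ finishes exactly as you outline. So your plan is sound in structure but is missing the one genuine analytic ingredient, namely the $L^1(\R^n)$-convergence of $\div^{s_k}\Psi$ to $\div\Psi$, without which the identification of the limit measure does not follow. (As a minor side remark, the passage from $Dw=Du$ to ``$w=u+\mathrm{const}$'' uses connectedness of $\Omega$; the paper makes the same implicit assumption, and in the disconnected case one would shift by a locally constant function on each component, which does not affect the gradients.)
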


\begin{proof}
By Proposition~\ref{prop:comp} there exist a not relabeled subsequence and a function $u\in BV_{\rm loc}(\Omega)$ such that 
\[
u_k\to u\quad\text{strongly in $L^1(\Omega)$ as $k\to\infty$}.
\] 
Moreover, by Proposition~\ref{prop:lsc-Ds} we have
\[
|Du|(\Omega)\le \liminf_{k\to\infty}|D^{s_k} u_k|(\Omega),
\]
which gives that $u\in BV(\Omega)$. In view of Proposition~\ref{prop:rep}, for all $k\in\N$ there exists a function $v_k\in BV(\Omega)$ such that 
\[
Dv_k=D^{s_k}u_k\quad\text{in $\mathcal M_b(\Omega;\R^n)$}.
\]
By applying Lemma~\ref{lem:BVO-comp}(1) to the sequence $(v_k)_k$, we can find a not relabeled subsequence, a sequence $(a_k)_k\subset\R$, and a function $v\in BV(\Omega)$ satisfying
\[
v_k-a_k\to v\quad\text{strongly in $L^1(\Omega)$ as $k\to\infty$},\qquad Dv_k\rightharpoonup Dv\quad\text{weakly* in $\mathcal M_b(\Omega;\R^n)$ as $k\to\infty$}.
\]
Let us fix $\Psi\in C^\infty_c(\Omega;\R^n)$. By using~\cite[Proposition~4.4]{CS23-1}, as $k\to\infty$ we get
\begin{align*}
&\left|\int_{\R^n}u_k(y)\div^{s_k}\Psi(y)\,\de y-\int_\Omega u(y)\div\Psi(y)\,\de y\right|\\
&\le\|\div^{s_k}\Psi-\div\Psi\|_{L^1(\R^n)} \sup_{k\in\N}\|u_k\|_{L^\infty(\R^n)}+\|\div\Psi\|_{L^\infty(\Omega)}\|u_k-u\|_{L^1(\supp\Psi)}\to 0.
\end{align*}
Hence,
\begin{align*}
\int_\Omega u(y)\div\Psi(y)\,\de y&=\lim_{k\to\infty}\int_{\R^n} u_k(y)\div^{s_k}\Psi(y)\,\de y\\
&=-\lim_{k\to\infty}\int_\Omega \Psi(y)\cdot\de D^{s_k} u_k(y)\\
&=-\lim_{k\to\infty}\int_\Omega \Psi(y)\cdot\de D v_k(y)\\
&=-\int_\Omega\Psi(y)\cdot \de Dv(y)=\int_\Omega v(y)\div\Psi(y)\,\de y.
\end{align*}
Therefore, there exists a constant $a\in\R$ such that 
\[
u=v+a\quad\text{in $L^1(\Omega)$}.
\]
Let us define
\[
w_k\coloneqq v_k-a_k+a\in BV(\Omega),\quad k\in\N.
\]
Then,
\begin{align*}
&u_k-w_k\to u-v-a=0& &\text{strongly in $L^1(\Omega)$ as $k\to\infty$},\\
&Dw_k=Dv_k=D^{s_k} u_k& &\text{in $\mathcal M_b(\Omega;\R^n)$ for all $k\in\N$},
\end{align*}
as required.
\end{proof}

We point out that, if $u_k\in BV^{s_k}(\Omega;T)$, and we apply Proposition~\ref{prop:rep}, then the new functions $w_k$ do not necessarily belong to $BV(\Omega;T)$ since $w_k$ does not take values in $T$. To solve this issue, we use the following anisotropic version of the coarea formula, which can be found in~\cite[Theorem~3]{Grasmair10}.

\begin{proposition}\label{prop:an-coarea}
Let $\Omega\subset\R^n$ be a bounded open set with Lipschitz boundary, let $\psi\in\mathcal G(\lambda,\Lambda)$, and $u\in BV(\Omega)$. Then,
\[
\int_\Omega \psi\left(y,\frac{\de Du}{\de |Du|}(y)\right)\de |Du|(y)=\int_{-\infty}^\infty\int_{\partial^*\{u<t\}\cap \Omega}\psi(y,\nu_{\{u<t\}}(y))\,\de \mathcal H^{n-1}(y)\,\de t.
\]
\end{proposition}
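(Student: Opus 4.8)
The strategy is to read both sides of the claimed identity as one convex, positively $1$-homogeneous functional of a vector measure, evaluated respectively at $Du$ and at a ``layer-cake'' superposition of indicator measures. For $\mu\in\mathcal M_b(\Omega;\R^n)$ set $\Phi(\mu)\coloneqq\int_\Omega\psi\big(y,\tfrac{\de\mu}{\de|\mu|}(y)\big)\,\de|\mu|(y)$. Since $\psi(y,\cdot)$ is continuous, convex, positively $1$-homogeneous, and $\le\Lambda|\cdot|$, it is the support function of the compact convex set $K(y)\coloneqq\{\zeta\in\R^n:\zeta\cdot\xi\le\psi(y,\xi)\text{ for all }\xi\in\R^n\}\subseteq\overline{B_\Lambda}$, and $y\mapsto K(y)$ is Hausdorff-continuous because $\psi$ is; by the classical duality for convex integral functionals of measures this gives $\Phi(\mu)=\sup\{\int_\Omega\zeta(y)\cdot\de\mu(y):\zeta\in C_c(\Omega;\R^n),\ \zeta(y)\in K(y)\text{ for all }y\}$. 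The left-hand side of the proposition is exactly $\Phi(Du)$. For the right-hand side, I would note that for $\mathcal L^1$-a.e.\ $t$ the set $\{u<t\}$ has finite perimeter in $\Omega$, that $\partial^*\{u<t\}$ and $\partial^*\{u>t\}$ agree up to an $\mathcal H^{n-1}$-null set, and that the normal $\nu_{\{u<t\}}$ in the statement coincides $\mathcal H^{n-1}$-a.e.\ with the polar vector $\tfrac{\de D\mathbf{1}_{\{u>t\}}}{\de|D\mathbf{1}_{\{u>t\}}|}$; hence the inner integral equals $\Phi(D\mathbf{1}_{\{u>t\}})$ and the right-hand side equals $\int_{\R}\Phi(D\mathbf{1}_{\{u>t\}})\,\de t$. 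Thus the statement reads $\Phi(Du)=\int_{\R}\Phi(D\mathbf{1}_{\{u>t\}})\,\de t$.

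For the inequality ``$\le$'' I would invoke the classical coarea formula at the level of vector measures, $Du=\int_{\R}D\mathbf{1}_{\{u>t\}}\,\de t$ in $\mathcal M_b(\Omega;\R^n)$, together with $\int_{\R}|D\mathbf{1}_{\{u>t\}}|(\Omega)\,\de t=|Du|(\Omega)<\infty$. Testing with any admissible $\zeta$ and using Fubini, $\int_\Omega\zeta\cdot\de Du=\int_{\R}\big(\int_\Omega\zeta\cdot\de D\mathbf{1}_{\{u>t\}}\big)\,\de t\le\int_{\R}\Phi(D\mathbf{1}_{\{u>t\}})\,\de t$; taking the supremum over $\zeta$ yields $\Phi(Du)\le\int_{\R}\Phi(D\mathbf{1}_{\{u>t\}})\,\de t$. (Equivalently, this is the superposition/Jensen inequality for the convex, positively $1$-homogeneous functional $\Phi$.)

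For the reverse inequality I would argue by density of smooth functions. If $u\in C^\infty(\R^n)$, the identity is elementary: applying the classical scalar coarea formula to the nonnegative Borel function $g(y)\coloneqq\psi\big(y,\nabla u(y)/|\nabla u(y)|\big)$ on $\{\nabla u\neq 0\}$ (and $g\coloneqq 0$ otherwise) and using $\psi(y,\nabla u(y))=|\nabla u(y)|\,g(y)$ by $1$-homogeneity, together with Sard's theorem (so that $\mathcal L^1$-a.e.\ $t$ is a regular value and $\{u=t\}\cap\Omega$ is a smooth hypersurface with unit normal $\nabla u/|\nabla u|=\nu_{\{u<t\}}$), gives $\int_\Omega\psi(y,\nabla u)\,\de y=\int_{\R}\Phi(D\mathbf{1}_{\{u>t\}})\,\de t$. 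For general $u\in BV(\Omega)$, I would extend $u$ to a $BV$ function on $\R^n$ with $|Du|(\partial\Omega)=0$ (possible since $\partial\Omega$ is Lipschitz) and mollify, obtaining $u_\varepsilon\in C^\infty$ with $u_\varepsilon\to u$ in $L^1(\Omega)$, $Du_\varepsilon\rightharpoonup Du$ weakly-* in $\mathcal M_b(\Omega;\R^n)$, and $|Du_\varepsilon|(\Omega)\to|Du|(\Omega)$. By Reshetnyak's continuity theorem (applicable since $\psi$ is continuous and bounded on $\R^n\times\mathbb S^{n-1}$) this forces $\int_\Omega\psi(y,\nabla u_\varepsilon)\,\de y=\Phi(Du_\varepsilon)\to\Phi(Du)$. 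On the other hand, $w\mapsto\int_{\R}\Phi(D\mathbf{1}_{\{w>t\}})\,\de t$ is lower semicontinuous along $u_\varepsilon\to u$ in $L^1(\Omega)$: passing to a subsequence, $u_\varepsilon\to u$ $\mathcal L^n$-a.e., and since $\mathcal L^n(\{u=t\})=0$ for $\mathcal L^1$-a.e.\ $t$ one gets $\mathbf{1}_{\{u_\varepsilon>t\}}\to\mathbf{1}_{\{u>t\}}$ in $L^1(\Omega)$ for such $t$; as $w\mapsto\Phi(Dw)$ is $L^1$-lower semicontinuous on indicator functions by Reshetnyak's lower semicontinuity theorem, Fatou's lemma gives $\int_{\R}\Phi(D\mathbf{1}_{\{u>t\}})\,\de t\le\liminf_\varepsilon\int_{\R}\Phi(D\mathbf{1}_{\{u_\varepsilon>t\}})\,\de t=\liminf_\varepsilon\Phi(Du_\varepsilon)=\Phi(Du)$. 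Together with ``$\le$'' this proves the proposition.

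The step I expect to require the most care is the reduction to smooth functions: one must simultaneously ensure $L^1$-convergence of the mollifications, \emph{strict} convergence of the total variations up to $\partial\Omega$ (which is where the Lipschitz regularity of $\partial\Omega$ and the boundary-null extension enter), and the applicability of Reshetnyak's continuity and lower semicontinuity theorems to the $x$-dependent integrand $\psi$. The remaining technical points---joint measurability in $t$ of the slice integrals, and the bookkeeping of orientations identifying $\nu_{\{u<t\}}$ with $\tfrac{\de D\mathbf{1}_{\{u>t\}}}{\de|D\mathbf{1}_{\{u>t\}}|}$ so that the (not necessarily even) integrand $\psi$ is evaluated on the correct unit normal---are routine but must be handled carefully.
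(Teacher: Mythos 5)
The paper does not actually prove this proposition: it cites it as~\cite[Theorem~3]{Grasmair10} and moves on, so there is no internal proof to compare your argument against. Your blind proof is therefore a self-contained substitute, and I believe it is correct. The architecture is the standard one for an anisotropic coarea formula: reinterpret both sides as the functional $\Phi(\mu)=\int_\Omega\psi\bigl(y,\tfrac{\de\mu}{\de|\mu|}\bigr)\de|\mu|$ evaluated at $Du$ and at the layer-cake family $D\mathbf{1}_{\{u>t\}}$; get ``$\le$'' from $Du=\int_\R D\mathbf{1}_{\{u>t\}}\,\de t$ together with the sublinearity of $\Phi$; and get ``$\ge$'' by smoothing a boundary-null $BV(\R^n)$ extension (this is where the Lipschitz assumption enters), using Sard plus the classical coarea formula for the mollifications $u_\varepsilon$, Reshetnyak continuity under strict convergence for $\Phi(Du_\varepsilon)\to\Phi(Du)$, and Fatou together with Reshetnyak lower semicontinuity on the level-set side. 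All the ingredients (AFP Theorem~3.40 for the vector coarea decomposition, AFP Theorems~2.38--2.39 for Reshetnyak with an $x$-dependent integrand, Poincar\'e/extension on Lipschitz domains) apply as you invoke them.

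Two comments worth recording. First, the orientation bookkeeping you flag at the end is genuinely the crux, not a routine afterthought: since $\psi(y,\cdot)$ is not assumed even, the identity is false with the opposite sign convention already for $u(x)=x$ in one dimension. The statement is correct exactly when $\nu_{\{u<t\}}$ is read as the measure-theoretic \emph{outer} normal of the sublevel set, i.e.\ as the polar vector of $D\mathbf{1}_{\{u>t\}}$, which is precisely the identification you adopt (and which also matches $\nabla u/|\nabla u|$ on the regular level sets in the smooth step). Second, for the ``$\le$'' direction the continuous-selection lemma underlying the duality formula $\Phi(\mu)=\sup\{\int\zeta\cdot\de\mu:\zeta\in C_c(\Omega;\R^n),\ \zeta(y)\in K(y)\}$ is nontrivial for $x$-dependent integrands; it is correct (via Hausdorff continuity of $y\mapsto K(y)$ and a Michael/Lusin-type selection), but a slightly more economical route, and one already in the paper's toolkit, is to quote directly the superposition (Jensen-type) inequality for sublinear functions of measures from Goffman--Serrin, which the authors themselves invoke in the $\Gamma$-limsup proof; you mention this as an equivalent formulation, and it is the cleaner choice here.
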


As a consequence of Proposition~\ref{prop:an-coarea}, we obtain the following result, which allows us to pass from a sequence $(w_k)_k\subset BV(\Omega)$ to a sequence $(z_k)_k\in BV(\Omega;T)$ without substantially increasing the energy $\mathcal E_k$ defined in~\eqref{eq:Ek}.

\begin{proposition}\label{prop:app-BVT}
Let $\Omega\subset\R^n$ be a bounded open set with Lipschitz boundary and $(\psi_k)_k\subset\mathcal G(\lambda,\Lambda)$. Assume that $(u_k)_k\subset BV(\Omega)$ and $u\in BV(\Omega;T)$ satisfy
\[
u_k\to u\quad\text{strongly in $L^1(\Omega)$ as $k\to\infty$}.
\]
Then, for all $\varepsilon\in (0,1)$ there exists a sequence $(v_k^\varepsilon)_k\in BV(\Omega;T)$ such that
\[
(1-\varepsilon) \int_\Omega \psi_k\left(y,\frac{\de Dv_k^\varepsilon}{\de |Dv_k^\varepsilon|}(y)\right)\de|Dv_k^\varepsilon|(y)\le\int_\Omega \psi_k\left(x,\frac{\de Du_k}{\de |Du_k|}(y)\right)\de|Du_k|(y)\quad\text{for all $k\in\N$},
\]
and satisfying
\begin{equation}\label{eq:vkdelta}
v_k^\varepsilon\to u\quad\text{strongly in $L^1(\Omega)$ as $k\to\infty$}.
\end{equation} 
\end{proposition}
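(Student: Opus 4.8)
The plan is to use the anisotropic coarea formula from Proposition~\ref{prop:an-coarea} to slice the functions $u_k$ into their level sets, and then to select, for each $k$, a finite collection of levels $t$ whose superlevel (or sublevel) sets can be recombined into a $T$-valued function that captures most of the energy. Since $u\in BV(\Omega;T)$ takes only the values $c_1<\dots<c_M$, the key geometric fact is that $L^1$-convergence $u_k\to u$ forces the distributional derivative $Du_k$, after slicing, to concentrate (in the limit) on levels lying in the $M-1$ open intervals $(c_i,c_{i+1})$, $i=1,\dots,M-1$, which are exactly the levels relevant for reconstructing $u$. More precisely, writing $g_k(t)\coloneq\int_{\partial^*\{u_k<t\}\cap\Omega}\psi_k(y,\nu_{\{u_k<t\}}(y))\,\de\mathcal H^{n-1}(y)$, Proposition~\ref{prop:an-coarea} gives $\mathcal E_k(u_k,\Omega)=\int_{\R}g_k(t)\,\de t$.

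First I would fix $\varepsilon\in(0,1)$ and, for each $k$, restrict attention to the slab of levels $I\coloneq[c_1,c_M]$ (outside this range the sublevel sets of $u_k$ are, for large $k$, essentially $\emptyset$ or $\Omega$, contributing negligibly in $L^1$; this uses the uniform $L^\infty$-type control coming from $u_k\to u$ with $u$ bounded — strictly speaking one truncates $u_k$ at levels $c_1$ and $c_M$ first, which does not increase $\mathcal E_k$ by convexity/$1$-homogeneity and the coarea formula, and improves $L^1$-convergence). Next, on each interval $[c_i,c_{i+1}]$ I would apply the mean value / averaging argument: since $\int_{c_i}^{c_{i+1}}g_k(t)\,\de t\le \mathcal E_k(u_k,\Omega)$, for any target one can choose a level $t_k^i\in(c_i,c_{i+1})$ with $g_k(t_k^i)\le \frac{1}{(1-\varepsilon)}\cdot\frac{1}{c_{i+1}-c_i}\int_{c_i}^{c_{i+1}}g_k(t)\,\de t$ — actually more carefully, one wants simultaneously good levels for all $i$, so I would instead pick $t_k^i$ in a subinterval of $(c_i,c_{i+1})$ of length $\varepsilon'(c_{i+1}-c_i)$ on which $g_k$ is close to its average, using Chebyshev/Fubini so that the sum over $i$ of $g_k(t_k^i)\,(c_{i+1}-c_i)$ is bounded by $\tfrac{1}{1-\varepsilon}\mathcal E_k(u_k,\Omega)$ up to harmless constants depending only on $\varepsilon$ and $T$. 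I would then define $v_k^\varepsilon\coloneq \sum_{i=1}^{M} c_i \mathbf 1_{E_k^i}$ where $E_k^i\coloneq \{t_k^{i-1}\le u_k< t_k^{i}\}$ with the conventions $t_k^0=-\infty$ (or $c_1$) and $t_k^M=+\infty$ (or $c_M$); this is a $T$-valued $BV$ function whose jump set is contained in $\bigcup_i \partial^*\{u_k<t_k^i\}$, so that $\mathcal E_k(v_k^\varepsilon,\Omega)\le \sum_{i=1}^{M-1}(\text{const})\,g_k(t_k^i)$, and by the selection this is at most $\tfrac{1}{1-\varepsilon}\mathcal E_k(u_k,\Omega)$ after absorbing the finitely many constants $(c_{i+1}-c_i)^{-1}$ into the choice — one has to track these constants and possibly replace $\varepsilon$ by a smaller $\varepsilon''$ so the final bound reads exactly $(1-\varepsilon)\mathcal E_k(v_k^\varepsilon,\Omega)\le \mathcal E_k(u_k,\Omega)$.

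For the $L^1$-convergence \eqref{eq:vkdelta}: since $u=\sum_i c_i\mathbf 1_{\{c_{i-1}<u\le c_i\}}$ (with the analogous conventions) and $u$ takes values in the discrete set $T$, the function $v_k^\varepsilon$ differs from $u$ only on the set where $u_k$ has crossed the "wrong" threshold, i.e. where $u_k\in[t_k^{i-1},t_k^i)$ but $u=c_j$ with $j\ne i$; on such a set $|u_k-u|\ge \dist(t_k^{i-1},c_j)\ge c_*\coloneq \min_i(c_{i+1}-c_i)/2>0$ (using that the $t_k^i$ are bounded away from the $c_j$ by construction). Hence $\mathcal L^n(\{v_k^\varepsilon\ne u\})\le \tfrac{1}{c_*}\|u_k-u\|_{L^1(\Omega)}\to 0$, and since all functions are bounded by $\max_i|c_i|$, this gives $\|v_k^\varepsilon-u\|_{L^1(\Omega)}\to 0$.

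The main obstacle I expect is the bookkeeping of constants in the level selection: one needs the chosen levels $t_k^i$ to lie \emph{strictly inside} the intervals $(c_i,c_{i+1})$ and uniformly bounded away from the endpoints (for the $L^1$-estimate above), while \emph{simultaneously} controlling $g_k(t_k^i)$ for all $M-1$ values of $i$ at once and keeping the total factor in front of $\mathcal E_k(u_k,\Omega)$ equal to $(1-\varepsilon)^{-1}$. The clean way is a two-step choice of parameters: first pick an auxiliary $\theta=\theta(\varepsilon,M,T)\in(0,1)$ so small that, setting $J_k^i\coloneq\{t\in(c_i+\theta\ell_i,\,c_{i+1}-\theta\ell_i)\}$ with $\ell_i=c_{i+1}-c_i$, Chebyshev's inequality on each $J_k^i$ guarantees a level $t_k^i\in J_k^i$ with $g_k(t_k^i)\le \tfrac{1}{|J_k^i|}\int_{J_k^i}g_k\le \tfrac{1}{(1-2\theta)\ell_i}\int_{c_i}^{c_{i+1}}g_k$; summing and using $\sum_i\int_{c_i}^{c_{i+1}}g_k\le \mathcal E_k(u_k,\Omega)$ yields $\mathcal E_k(v_k^\varepsilon,\Omega)\le \big(\max_i \tfrac{1}{(1-2\theta)\ell_i}\big)\cdot(\text{perimeter-counting const})\cdot\mathcal E_k(u_k,\Omega)$; then one checks this multiplicative constant is $\le (1-\varepsilon)^{-1}$ for $\theta$ small — but in fact the cleanest fix, which avoids the $\ell_i^{-1}$ factors entirely, is to \emph{not} sum the $g_k(t_k^i)$ but rather note directly that the jump part of $Dv_k^\varepsilon$ is supported on $\bigcup_{i=1}^{M-1}\partial^*\{u_k<t_k^i\}\cap\Omega$ and to choose the single level $t_k^i$ in each interval minimizing $g_k$, i.e. $g_k(t_k^i)\le \tfrac{1}{\ell_i}\int_{c_i}^{c_{i+1}}g_k(t)\,\de t$ by the mean value inequality; since on the jump set $\{v_k^\varepsilon\}^\pm$ differ by some $c_a,c_b$ one still gets $\mathcal E_k(v_k^\varepsilon,\Omega)\le \Lambda\sum_{i=1}^{M-1}\mathcal H^{n-1}(\partial^*\{u_k<t_k^i\}\cap\Omega)\le \tfrac{\Lambda}{\lambda}\sum_{i=1}^{M-1}g_k(t_k^i)\le \tfrac{\Lambda}{\lambda}\big(\max_i\ell_i^{-1}\big)\,\mathcal E_k(u_k,\Omega)$ — still not quite $(1-\varepsilon)^{-1}$. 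The honest resolution, and presumably the one the authors intend, is that $\psi_k\in\mathcal G(\lambda,\Lambda)$ lets one bound $g_k$ below and above by the \emph{same} Euclidean perimeter up to the ratio $\Lambda/\lambda$, so the statement's constant $(1-\varepsilon)$ is achievable only if one additionally exploits that the relevant levels can be taken in a set of full measure in $(c_i,c_{i+1})$ and, more importantly, that one is allowed to shrink $\Omega$ negligibly or to subtract a vanishing-energy correction; I would therefore organize the proof so that the factor lost is exactly $(1-\varepsilon)$ by choosing, via Fubini, the levels $t_k^i$ in the set where the \emph{$\psi_k$-weighted} slice satisfies $g_k(t_k^i)\le (1-\varepsilon)^{-1}\tfrac{1}{\ell_i}\int_{c_i}^{c_{i+1}}g_k$ \emph{and} recombining so that each $\partial^*\{u_k<t_k^i\}$ contributes its \emph{own} $\psi_k$-energy $g_k(t_k^i)$ (not $\Lambda$ times perimeter) — this is possible because $Dv_k^\varepsilon\restriction$ (the part of the jump set inherited from level $t_k^i$) has the same normal and hence the same density $\psi_k(y,\nu)$ as the corresponding piece of $D\mathbf 1_{\{u_k<t_k^i\}}$. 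Summing these and using $\sum_i\int_{c_i}^{c_{i+1}}g_k\le \mathcal E_k(u_k,\Omega)$ together with $\ell_i^{-1}$ absorbed by choosing the levels in a subinterval of length $\ell_i$ (so no $\ell_i^{-1}$ appears — the mean is over the full interval of length $\ell_i$, giving $\int_{c_i}^{c_{i+1}}g_k$ directly, not divided) closes the estimate as stated.
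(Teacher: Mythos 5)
Your overall strategy --- truncate at $c_1$ and $c_M$, slice by the anisotropic coarea formula, pick a good level in the interior of each interval $(c_{i-1},c_i)$ by an averaging argument, and reassemble a $T$-valued step function --- is the same as the paper's. However, there is a genuine gap in the bookkeeping that your final paragraph circles around without closing.

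The missing step is to apply the coarea formula of Proposition~\ref{prop:an-coarea} to $v_k^\varepsilon$ \emph{itself}. Since $\{v_k^\varepsilon<t\}=\{w_k<t_k^i\}$ for every $t\in(c_{i-1},c_i)$ (with $w_k$ the truncation of $u_k$), the coarea formula gives the exact identity
\[
\mathcal E_k(v_k^\varepsilon,\Omega)=\int_{c_1}^{c_M}\int_{\partial^*\{v_k^\varepsilon<t\}\cap\Omega}\psi_k\bigl(y,\nu_{\{v_k^\varepsilon<t\}}(y)\bigr)\,\de\mathcal H^{n-1}(y)\,\de t=\sum_{i=2}^M(c_i-c_{i-1})\,g_k(t_k^i),
\]
where $g_k(t)\coloneqq\int_{\partial^*\{w_k<t\}\cap\Omega}\psi_k\bigl(y,\nu_{\{w_k<t\}}(y)\bigr)\,\de\mathcal H^{n-1}(y)$. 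The length factor $(c_i-c_{i-1})$ is exactly what neutralizes the $(c_i-c_{i-1})^{-1}$ you keep worrying about. Setting $\theta\coloneqq\min_i(c_i-c_{i-1})$ and choosing $t_k^i\in[c_{i-1}+\varepsilon\theta/2,\,c_i-\varepsilon\theta/2]$ so that $(c_i-c_{i-1}-\varepsilon\theta)\,g_k(t_k^i)\le\int_{c_{i-1}}^{c_i}g_k(t)\,\de t$ (possible by averaging), one uses $(1-\varepsilon)(c_i-c_{i-1})\le c_i-c_{i-1}-\varepsilon\theta$ and sums:
\[
(1-\varepsilon)\,\mathcal E_k(v_k^\varepsilon,\Omega)\le\sum_{i=2}^M(c_i-c_{i-1}-\varepsilon\theta)\,g_k(t_k^i)\le\sum_{i=2}^M\int_{c_{i-1}}^{c_i}g_k(t)\,\de t=\mathcal E_k(w_k,\Omega)\le\mathcal E_k(u_k,\Omega).
\]
No factor $\Lambda/\lambda$ enters, no $(c_i-c_{i-1})^{-1}$ survives, and no re-parametrization of $\varepsilon$ is needed. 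Your closing remark that ``the mean is over the full interval of length $\ell_i$, giving $\int g_k$ directly, not divided'' is circular as written: the mean-value inequality \emph{does} produce a $1/(c_i-c_{i-1})$; it is the coarea identity for $v_k^\varepsilon$ that multiplies it back in. Likewise, the detour through $\Lambda\mathcal H^{n-1}(\partial^*\{u_k<t_k^i\})\le(\Lambda/\lambda)g_k(t_k^i)$ is unnecessary and lossy --- the $\psi_k$-weighted slice energy of $v_k^\varepsilon$ equals $g_k(t_k^i)$ level by level.

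On the $L^1$ step: your threshold $c_*=\min_i(c_{i+1}-c_i)/2$ is incorrect. Since $t_k^i\in[c_{i-1}+\varepsilon\theta/2,\,c_i-\varepsilon\theta/2]$, the correct lower bound on $|u_k-u|$ on $\{v_k^\varepsilon\neq u\}$ is $\varepsilon\theta/2$, so your Chebyshev-type measure estimate carries an $\varepsilon$-dependent constant. This is harmless (for fixed $\varepsilon$ you still get $\|v_k^\varepsilon-u\|_{L^1(\Omega)}\to0$), so your quantitative route works once corrected. The paper avoids the quantitative bound altogether: along a subsequence it has $\mathbf 1_{\{w_k<t\}}\to\mathbf 1_{\{u<c_i\}}$ in $L^1(\Omega)$ for a.e.\ $t\in(c_{i-1},c_i)$, it squeezes the chosen level $t_k^i$ between two such good levels $s^i<t_k^i<r^i$, and it concludes with Fubini and the Urysohn property. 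Either route is fine.
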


\begin{proof}
We define
\[
w_k\coloneqq \min\{\max\{u_k, c_1 \}, c_M \}\quad\text{in $\Omega$ for all $k\in\N$}.
\]
Then, $(w_k)_k\subset BV(\Omega)$ and 
\begin{equation}\label{eq:wk-uk}
\int_\Omega\psi_k\left(y,\frac{\de Dw_k}{\de |Dw_k|}(y)\right)\de|Dw_k|(y)\le\int_\Omega \psi_k\left(y,\frac{\de Du_k}{\de |Du_k|}(y)\right)\de|Du_k|(y)\quad\text{for all $k\in\N$}.
\end{equation}
Moreover, since $u\in BV(\Omega;T)$, we have
\begin{equation}\label{eq:wk}
w_k\to \min\{\max\{u, c_1 \}, c_M \}=u\quad\text{strongly in $L^1(\Omega)$ as $k\to\infty$}.
\end{equation}
By using the anisotropic coarea formula of Proposition~\ref{prop:an-coarea}, we have
\begin{align*}
\int_\Omega \psi_k\left(y,\frac{\de Dw_k}{\de |Dw_k|}(y)\right)\de|Dw_k|(y)&= \int_{c_1}^{c_M} \int_{\partial^*\{w_k<t\}\cap \Omega}\psi_k(y,\nu_{\{w_k<t\}}(y))\,\de\mathcal H^{n-1}(y)\,\de t\\
&= \sum_{i=2}^M\int_{c_{i-1}}^{c_i} \int_{\partial^*\{w_k<t\}\cap \Omega}\psi_k(y,\nu_{\{w_k<t\}}(y))\,\de\mathcal H^{n-1}(y)\,\de t.
\end{align*}

Let us fix $\varepsilon\in (0,1)$. We set
\[
\theta\coloneqq \min_{i\in\{2,\dots,M\}}(c_i-c_{i-1}).
\]
 We claim that for all $i\in\{2,\dots,M\}$ there exist $t_k^i\in [c_{i-1}+\frac{\varepsilon\theta}{2},c_i-\frac{\varepsilon\theta}{2}] $ such that $\{w_k<t_k^i\}$ has finite perimeter in $\Omega$ and
\begin{align}\label{eq:tidelta}
& (c_i-c_{i-1}-\varepsilon\theta) \int_{\partial^*\{w_k<t_k^i\}\cap \Omega}\psi_k(y,\nu_{\{w_k<t_k^i\}}(y))\,\de\mathcal H^{n-1}(y)\nonumber\\
&\le \int_{c_{i-1}}^{c_i} \int_{\partial^*\{w_k<t\}\cap \Omega}\psi_k(y,\nu_{\{w_k<t\}}(y))\,\de\mathcal H^{n-1}(y)\,\de t.
\end{align}
Indeed, if~\eqref{eq:tidelta} is false for a.e.\ $t\in [c_{i-1}+\frac{\varepsilon\theta}{2},c_i-\frac{\varepsilon\theta}{2}]$, then we get
\begin{align*}
& (c_i-c_{i-1}-\varepsilon\theta) \int_{\partial^*\{w_k<t\}\cap \Omega}\psi_k(y,\nu_{\{w_k<t\}}(y))\,\de\mathcal H^{n-1}(y)\\
&> \int_{c_{i-1}}^{c_i} \int_{\partial^*\{w_k<t\}\cap \Omega}\psi_k(y,\nu_{\{w_k<t\}}(y))\,\de\mathcal H^{n-1}(y)\,\de t,
\end{align*}
which leads to a contradiction by integrating on $[c_{i-1}+\frac{\varepsilon\theta}{2},c_i-\frac{\varepsilon\theta}{2}]$. 
Thus,
\begin{align}\label{eq:nu-wk}
&(1-\varepsilon) \sum_{i=2}^M(c_i-c_{i-1}) \int_{\partial^*\{w_k<t_k^i\}\cap \Omega} \psi_k(y,\nu_{\{w_k<t_k^i\}}(y))\,\de\mathcal H^{n-1}(y)\nonumber\\
& \le \sum_{i=2}^M(c_i-c_{i-1}-\varepsilon\theta) \int_{\partial^*\{w_k<t_k^i\}\cap \Omega} \psi_k(y,\nu_{\{w_k<t_k^i\}}(y))\,\de\mathcal H^{n-1}(y) \nonumber\\
&\le \int_\Omega \psi_k\left(y,\frac{\de Dw_k}{\de |Dw_k|}(y)\right)\de|Dw_k|(y).
\end{align}
For all $k\in\N$ we define the sets
\[
E_k^1\coloneqq \{w_k<t_k^2\},\quad E_k^i\coloneqq\{t_k^i\le w_k<t_k^{i+1}\}\text{ for $i\in\{2,\dots,M-1\}$},\quad
E_k^M\coloneqq\{w_k\ge t_k^M\},
\]
and the functions $v_k^\varepsilon\colon \Omega\to T$ as 
\[
v_k^\varepsilon\coloneq\sum_{i=1}^M c_i \mathbf{1}_{E_k^i}.
\]
By construction $(v_k^\varepsilon)_k\subset BV(\Omega;T)$ and 
\begin{equation}\label{eq:vk-wk}
\{v_k^\varepsilon<t\}=\{w_k<t_k^i\}\quad\text{for all $t\in (c_{i-1},c_i) $ and $i\in \{2,\dots,M\}$}.
\end{equation}
In particular, by applying again Proposition~\ref{prop:an-coarea} and using~\eqref{eq:wk-uk} and~\eqref{eq:nu-wk}, we get
\begin{align*}
&(1-\varepsilon) \int_\Omega \psi_k\left(y,\frac{\de Dv_k^\varepsilon}{\de |Dv_k^\varepsilon|}(y)\right)\de|Dv_k^\varepsilon|(y)\\
&=(1-\varepsilon) \int_{c_1}^{c_M} \int_{\partial^*\{v_k^\varepsilon<t\}\cap\Omega} \psi_k\left(y,\nu_{\{v_k^\varepsilon<t\}}(y)\right)\de\mathcal H^{n-1}(y)\,\de t\\
&=(1-\varepsilon) \sum_{i=2}^M(c_i-c_{i-1}) \int_{{\partial^*\{w_k<t_k^i\}\cap \Omega}}\psi_k\left(y,\nu_{\{w_k<t_k^i\}}(y)\right)\de\mathcal H^{n-1}(y)\\
%&\le\int_\Omega \psi_k\left(y,\frac{\de Dw_k}{\de |Dw_k|}(y)\right)\de|Dw_k|(y)\\
&\le\int_\Omega \psi_k\left(y,\frac{\de Du_k}{\de |Du_k|}(y)\right)\de|Du_k|(y).
\end{align*}

To show~\eqref{eq:vkdelta}, we first observe that~\eqref{eq:wk} implies the existence of a not relabeled subsequence such that, for all $i\in\{2,\dots,M\}$ and for a.e.\ $t\in(c_{i-1},c_i)$, we have
\begin{equation}\label{eq:wkt}
\mathbf{1}_{\{w_k < t\}} \to \mathbf{1}_{\{u < t\}} = \mathbf{1}_{\{u < c_i \}} \quad \text{strongly in $L^1(\Omega)$ as $k \to \infty$}.
\end{equation}
In particular, for all $i\in\{2,\dots,M\}$ we can find $s^i\in (c_{i-1},c_{i-1}+\frac{\varepsilon\theta}{2}) $ and $r^i\in (c_i-\frac{\varepsilon\theta}{2},c_i) $ such that~\eqref{eq:wkt} holds. Then, 
\begin{align*}
&\int_\Omega |\mathbf{1}_{\{w_k<t_k^i\}}(x)-\mathbf{1}_{\{u< c_i \}}(x)|\,\de x\\
&=\mathcal L^n(\{w_k<t_k^i\}\triangle\{u< c_i \})\le\mathcal L^n(\{w_k< r^i\}\setminus\{u< c_i \})+\mathcal L^n(\{u< c_i \}\setminus \{w_k<s^i\})\\
&\le\int_\Omega |\mathbf{1}_{\{w_k<r^i\}}(x)-\mathbf{1}_{\{u< c_i \}}(x)|\,\de x+\int_\Omega |\mathbf{1}_{\{w_k<s^i\}}(x)-\mathbf{1}_{\{u< c_i \}}(x)|\,\de x,
\end{align*}
where we used that $s^i<t_k^i<r^i$. By~\eqref{eq:wkt} this yields
\[
\mathbf{1}_{\{w_k<t_k^i\}}\to \mathbf{1}_{\{u< c_i \}}\quad\text{strongly in $L^1(\Omega)$ as $k\to\infty$ for all $i\in\{2,\dots,M\}$}.
\]
Therefore, by Fubini's theorem and~\eqref{eq:vk-wk} we derive
\begin{align*}
\int_\Omega|v_k^\varepsilon(x)-u(x)|\,\de x&= \int_{c_1}^{c_M} \int_\Omega|\mathbf{1}_{\{v_k^\varepsilon<t\}}(x)- \mathbf{1}_{\{u<t\}}(x)|\,\de x\,\de t\\
&=\sum_{i=2}^M (c_i-c_{i-1}) \int_\Omega|\mathbf{1}_{\{w_k<t_k^i\}}(x)- \mathbf{1}_{\{u< c_i \}}(x)|\,\de x\to 0\quad\text{as $k\to\infty$}.
\end{align*}
By the Urysohn property, we conclude the proof. 
\end{proof}

We can finally prove the $\Gamma$-liminf inequality.

\begin{proposition}[$\Gamma$-liminf inequality]\label{prop:lower-bound}
Let $\Omega\subset\R^n$ be a bounded open set with Lipchitz boundary. Let $(\psi_k)_k\subset \mathcal G(\lambda,\Lambda)$ and $(s_k)_k\subset (0,1)$ be such that $s_k\to 1$ as $k\to\infty$. Let $(\mathcal F_k)_k$ be defined as in~\eqref{eq:Fk}. Assume~\eqref{eq:psi-local}. Let $(u_k)_k\subset L^1(\Omega)$ and $u\in BV(\Omega;T)$ be such that 
\begin{equation}\label{eq:uku}
u_k\to u\quad\text{strongly in $L^1(\Omega)$ as $k\to\infty$}.
\end{equation}
Then,
\[
\liminf_{k\to\infty}\mathcal F_k(u_k)\ge\mathcal F_0(u). 
\]
\end{proposition}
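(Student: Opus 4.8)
The plan is to reduce the fractional $\Gamma$-liminf inequality to the known local one via the representation results established above. We may assume $\liminf_{k\to\infty}\mathcal F_k(u_k)<\infty$, and (passing to a subsequence attaining the liminf) that $\sup_k\mathcal F_k(u_k)<\infty$. For each $k$, by definition of $\mathcal F_k$ in~\eqref{eq:Fk} there exists $v_k\in\mathcal A^{s_k}(u_k,\Omega)$, i.e.\ $v_k\in BV^{s_k}(\Omega;T)$ with $v_k=u_k$ a.e.\ in $\Omega$, such that
\[
\int_\Omega \psi_k\Big(y,\tfrac{\de D^{s_k}v_k}{\de |D^{s_k}v_k|}(y)\Big)\de |D^{s_k}v_k|(y)\le \mathcal F_k(u_k)+\tfrac1k.
\]
Since $\psi_k\in\mathcal G(\lambda,\Lambda)$ and $v_k$ takes values in the finite set $T$, this bounds $\|v_k\|_{L^\infty(\R^n)}+|D^{s_k}v_k|(\Omega)$ uniformly in $k$. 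Because $v_k=u_k\to u$ in $L^1(\Omega)$ with $u\in BV(\Omega;T)$, we are in a position to apply Lemma~\ref{lem:comp2}: along a (not relabeled) subsequence there is $(w_k)_k\subset BV(\Omega)$ with $w_k-u_k\to 0$ strongly in $L^1(\Omega)$ and $Dw_k=D^{s_k}v_k$ in $\mathcal M_b(\Omega;\R^n)$ for every $k$. In particular $w_k\to u$ strongly in $L^1(\Omega)$, and since $Dw_k=D^{s_k}v_k$ the Radon--Nikod\'ym derivative and total variation coincide, so that
\[
\mathcal E_k(w_k,\Omega)=\int_\Omega \psi_k\Big(y,\tfrac{\de Dw_k}{\de |Dw_k|}(y)\Big)\de |Dw_k|(y)=\int_\Omega \psi_k\Big(y,\tfrac{\de D^{s_k}v_k}{\de |D^{s_k}v_k|}(y)\Big)\de |D^{s_k}v_k|(y)\le \mathcal F_k(u_k)+\tfrac1k.
\]

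The next step handles the fact that $w_k\in BV(\Omega)$ need not be $T$-valued, so Theorem~\ref{thm:local-gamma} does not apply directly. Fix $\varepsilon\in(0,1)$. Applying Proposition~\ref{prop:app-BVT} to the sequence $(w_k)_k$ and the limit $u\in BV(\Omega;T)$, we obtain $(v_k^\varepsilon)_k\subset BV(\Omega;T)$ with $v_k^\varepsilon\to u$ strongly in $L^1(\Omega)$ and
\[
(1-\varepsilon)\,\mathcal E_k(v_k^\varepsilon,\Omega)\le \mathcal E_k(w_k,\Omega)\le \mathcal F_k(u_k)+\tfrac1k\qquad\text{for all }k\in\N.
\]
Now $(v_k^\varepsilon)_k$ is a sequence of $T$-valued $BV$-functions converging in $L^1(\Omega)$ to $u\in BV(\Omega;T)$, so the $\Gamma$-liminf inequality from Theorem~\ref{thm:local-gamma} (valid along the fixed subsequence of Theorem~\ref{thm:local-gamma}, which we have intersected with the subsequences above) gives
\[
\liminf_{k\to\infty}\mathcal E_k(v_k^\varepsilon,\Omega)\ge \mathcal F_0(u,\Omega)=\mathcal F_0(u).
\]
Combining, $\liminf_{k\to\infty}\big(\mathcal F_k(u_k)+\tfrac1k\big)\ge (1-\varepsilon)\mathcal F_0(u)$, hence $\liminf_{k\to\infty}\mathcal F_k(u_k)\ge (1-\varepsilon)\mathcal F_0(u)$, and letting $\varepsilon\to 0^+$ yields the claim. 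A final routine point: since the original liminf was extracted along a subsequence and then further subsequences were taken, one argues on an arbitrary subsequence of $(u_k)_k$ realizing $\liminf_k\mathcal F_k(u_k)$ and notes the bound obtained is for that value, so the inequality holds for the full sequence.

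The steps are individually short because the heavy lifting is already packaged: Lemma~\ref{lem:comp2} converts the fractional gradient into a genuine $BV$ gradient with the \emph{same} vector measure (this is where Propositions~\ref{prop:rep}, \ref{prop:comp}, \ref{prop:lsc-Ds} and the consistency of the weak $s$-divergence integration by parts enter), and Proposition~\ref{prop:app-BVT} restores the constraint of $T$-valued competitors at the cost of a multiplicative $(1-\varepsilon)$. The main subtlety I would watch carefully is bookkeeping of subsequences: Theorem~\ref{thm:local-gamma} only provides the $\Gamma$-liminf bound along one fixed subsequence of the $(\psi_k)$, and both Lemma~\ref{lem:comp2} and Proposition~\ref{prop:app-BVT} pass to further subsequences, so one should first fix the Theorem~\ref{thm:local-gamma} subsequence globally (as the excerpt does just before this subsection), then run the standard ``subsequence of a subsequence'' argument to conclude the liminf inequality for the whole sequence. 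Note that, as emphasized in the paper, neither the uniform approximation condition~\eqref{A} nor the compatibility condition~\eqref{eq:compatibility} is used anywhere in this argument.
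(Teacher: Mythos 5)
Your proposal is correct and follows essentially the same route as the paper: extract near-minimizing competitors $v_k\in\mathcal A^{s_k}(u_k,\Omega)$, convert the fractional variation into a genuine $BV$-gradient via Lemma~\ref{lem:comp2}, pass to $T$-valued functions at a $(1-\varepsilon)$-cost using Proposition~\ref{prop:app-BVT}, invoke the local $\Gamma$-liminf from Theorem~\ref{thm:local-gamma}, and send $\varepsilon\to 0^+$. The only difference is that you spell out the subsequence-of-a-subsequence bookkeeping more explicitly than the paper does, which is a matter of presentation rather than substance.
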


\begin{proof}
Let $(u_k)_k\subset L^1(\Omega)$ and $u\in BV(\Omega;T)$ satisfy~\eqref{eq:uku}. Without loss of generality, we may assume that 
\[
\liminf_{k\to\infty}\mathcal F_k(u_k)=\lim_{k\to\infty}\mathcal F_k(u_k),\qquad \sup_{k\in \N}\mathcal F_k(u_k)<\infty.
\]
In particular, for all $k\in\N$ there exists $v_k\in \mathcal A^{s_k}(u_k,\Omega)$ such that 
\begin{equation}\label{eq:liminf1}
\int_\Omega \psi_k\left(y,\frac{\de D^{s_k}v_k}{|\de D^{s_k}v_k|}(y)\right)\de |D^{s_k}v_k|(y)\le \mathcal F_k(u_k)+\frac{1}{k},
\end{equation}
which yields
\[
\|v_k\|_{L^\infty(\R^n)}+|D^{s_k}v_k|(\Omega)\le \max_{i\in\{1,\dots,M\}}|c_i| +\frac{1}{\lambda}\left(\sup_{k\in\N}\mathcal F_k(u_k)+1\right)<\infty.
\]
By Lemma~\ref{lem:comp2}, we can find a not relabeled subsequence and $(w_k)_k\subset BV(\Omega)$ such that 
\begin{align*}
w_k\to u\quad\text{strongly in $L^1(\Omega)$ as $k\to\infty$},\qquad
Dw_k=D^{s_k}v_k\quad\text{in $\mathcal M_b(\Omega;\R^n)$ for all $k\in\N$}.
\end{align*}
This gives
\begin{equation}\label{eq:liminf2}
\int_\Omega \psi_k\left(y,\frac{\de D^{s_k}v_k}{\de |D^{s_k}v_k|}(y)\right)\de |D^{s_k}v_k|(y)= \mathcal E_k(w_k,\Omega)\quad\text{for all $k\in\N$},
\end{equation}
where $\mathcal E_k$ is the local functional defined in~\eqref{eq:Ek}.

Let $\varepsilon\in (0,1)$ be fixed. By Proposition~\ref{prop:app-BVT} applied to $(w_k)_k\subset BV(\Omega)$ and $u\in BV(\Omega;T)$, we can find $(z_k^\varepsilon)_k\subset BV(\Omega;T)$ such that 
\[
(1-\varepsilon) \mathcal E_k(z_k^\varepsilon,\Omega)\le \mathcal E_k(w_k,\Omega),\qquad z_k^\varepsilon\to u\quad\text{strongly in $L^1(\Omega)$ as $k\to\infty$}.
\]
Thus, thanks to Theorem~\ref{thm:local-gamma} and~\eqref{eq:liminf1}--\eqref{eq:liminf2} we get
\[
(1-\varepsilon) \mathcal F_0(u)\le (1-\varepsilon) \liminf_{k\to\infty}\mathcal E_k(z_k^\varepsilon,\Omega)\le \liminf_{k\to\infty}\mathcal E_k(w_k,\Omega)\le\lim_{k\to\infty}\mathcal F_k(u_k).
\]
By sending $\varepsilon\to 0^+$ we conclude.
\end{proof} 

\subsection{\texorpdfstring{$\Gamma$}{Gamma}-limsup inequality}

We now focus on the proof of the $\Gamma$-limsup inequality. The strategy is to exploit the uniform approximation condition~\eqref{A} to reduce the problem to the case where the densities have the form $\psi_k(x,\xi)=\sum_{i=1}^Nb^i_k(x)\varphi^i_k(\xi)$. We then use the Riesz potential to pass from the fractional variation to the classical total variation. This last step relies on the following convergence result.

\begin{lemma}\label{lem:a-app}
Let $\alpha\in (0,n)$, $R>1$, and $\varphi\in C_c^1(B_R)$. Let $b\colon\R^n\to\R$ be a uniformly continuous and bounded function, and let $r_b$ be the radius of uniform continuity of $b$ defined in~\eqref{eq:rb}. Then, for all $\eta>0$ we have
\begin{align}\label{eq:aphi-est}
\|\mathcal I^{\alpha}(b\phi)-b\phi\|_{L^\infty(\R^n)}&\le\max\left\{(3R)^\alpha-r_b(\eta)^\alpha,\frac{\alpha R^{\alpha}}{n}\right\}\frac{\omega_n}{\alpha\gamma_{\alpha}}\|b\|_{L^\infty(\R^n)}\|\phi\|_{L^\infty(B_R)}\nonumber\\
&\quad+\left|1-\frac{\omega_n r_b(\eta)^\alpha}{\alpha\gamma_{\alpha}}\right|\|b\|_{L^\infty(\R^n)}\|\phi\|_{L^\infty(B_R)}+\frac{\omega_n r_b(\eta)^\alpha \eta}{\alpha\gamma_{\alpha}}\|\phi\|_{L^\infty(B_R)}\nonumber\\
&\quad+\frac{\omega_n r_b(\eta)^{\alpha+1}}{(\alpha+1)\gamma_{\alpha}}\|b\|_{L^\infty(\R^n)}\|\nabla \phi\|_{L^\infty(B_R)}.
\end{align}
In particular, if $(\alpha_k)_k\subset (0,n)$ satisfies $\alpha_k\to 0$ as $k\to\infty$, and $(b_k)_k$ is a sequence of uniformly continuous and bounded functions satisfying 
\begin{equation}\label{eq:bk-conv}
\sup_{k\in\N}\|b_k\|_{L^\infty(\R^n)}<\infty,\qquad \alpha_k\log(r_{b_k}(\eta))\to 0\quad\text{as $k\to\infty$ for all $\eta>0$},
\end{equation}
then
\begin{equation}\label{eq:aphi-lim}
\lim_{k\to\infty}\|\mathcal I^{\alpha_k}(b_k\phi)-b_k\phi\|_{L^\infty(\R^n)}=0.
\end{equation}
\end{lemma}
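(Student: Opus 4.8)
The plan is to estimate the quantity $\mathcal{I}^\alpha(b\phi)(x) - b(x)\phi(x)$ pointwise in $x \in \mathbb{R}^n$ by splitting the defining integral over a small ball and its complement, calibrated by the radius of uniform continuity $r_b(\eta)$, and then to read off the limit statement from the explicit bound. First I would write
\[
\mathcal{I}^\alpha(b\phi)(x) - b(x)\phi(x) = \frac{1}{\gamma_\alpha}\int_{\mathbb{R}^n}\frac{b(y)\phi(y) - b(x)\phi(x)}{|y-x|^{n-\alpha}}\,\de y + \Big(\frac{1}{\gamma_\alpha}\int_{B_\rho(x)}\frac{\de y}{|y-x|^{n-\alpha}} - 1\Big) b(x)\phi(x) - \frac{1}{\gamma_\alpha}\int_{\mathbb{R}^n \setminus B_\rho(x)}\frac{b(x)\phi(x)}{|y-x|^{n-\alpha}}\,\de y,
\]
where $\rho \coloneqq r_b(\eta)$; note that $\int_{B_\rho(x)}|y-x|^{\alpha-n}\,\de y = \omega_n \rho^\alpha/\alpha$, which explains the terms $|1 - \omega_n r_b(\eta)^\alpha/(\alpha\gamma_\alpha)|$ and the polar-coordinate constant $\omega_n/(\alpha\gamma_\alpha)$ appearing throughout \eqref{eq:aphi-est}. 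Since $\supp(b\phi) \subseteq B_R$, the integrand of the first term vanishes unless $y \in B_R$ or $x \in B_R$, so I restrict attention to $x \in B_{2R}$ (outside, only the tail term survives and is handled by the same computation), and there $|y - x| \le 3R$ on the support, giving the annular volume factor $(3R)^\alpha - r_b(\eta)^\alpha$ over $B_{3R}(x) \setminus B_\rho(x)$.

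Next I would split the main integral into the region $B_\rho(x)$ and its complement (intersected with $B_{3R}(x)$, using the support). On $B_\rho(x)$ I write $b(y)\phi(y) - b(x)\phi(x) = (b(y) - b(x))\phi(y) + b(x)(\phi(y) - \phi(x))$; the first piece is bounded by $\eta \|\phi\|_{L^\infty(B_R)}$ by definition of $r_b(\eta)$, contributing the term $\omega_n r_b(\eta)^\alpha \eta/(\alpha\gamma_\alpha)\|\phi\|_{L^\infty(B_R)}$ after integrating $|y-x|^{\alpha-n}$ over $B_\rho(x)$, and the second piece is bounded by $\|b\|_{L^\infty}\|\nabla\phi\|_{L^\infty(B_R)}|y-x|$, which upon integrating $|y-x|^{\alpha-n+1}$ over $B_\rho(x)$ yields $\omega_n r_b(\eta)^{\alpha+1}/((\alpha+1)\gamma_\alpha)\|b\|_{L^\infty}\|\nabla\phi\|_{L^\infty(B_R)}$. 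On the complementary annulus $B_{3R}(x) \setminus B_\rho(x)$ I just bound $|b(y)\phi(y) - b(x)\phi(x)| \le 2\|b\|_{L^\infty}\|\phi\|_{L^\infty(B_R)}$ (or keep the cruder constant and absorb into the $\max$), and $\int_{B_{3R}(x)\setminus B_\rho(x)}|y-x|^{\alpha-n}\,\de y = (\omega_n/\alpha)((3R)^\alpha - \rho^\alpha)$; the third correction term (the full-space tail minus the local average) contributes $\frac{\omega_n}{\alpha\gamma_\alpha}\cdot \frac{\alpha R^\alpha}{n}$ in the regime where $x$ is deep inside, which is where the $\alpha R^\alpha/n$ entry in the $\max$ comes from. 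Collecting these gives \eqref{eq:aphi-est} after reorganizing and taking the supremum over $x$.

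For the limit \eqref{eq:aphi-lim}, I fix $\eta > 0$ and apply \eqref{eq:aphi-est} with $\alpha = \alpha_k$, $b = b_k$. Recall from \eqref{eq:gammaa} that $\alpha_k \gamma_{\alpha_k} \to \omega_n$, so $\omega_n/(\alpha_k \gamma_{\alpha_k}) \to 1$ stays bounded; moreover $(3R)^{\alpha_k} \to 1$ and, crucially, $r_{b_k}(\eta)^{\alpha_k} = \exp(\alpha_k \log r_{b_k}(\eta)) \to 1$ by the second hypothesis in \eqref{eq:bk-conv}. Hence $(3R)^{\alpha_k} - r_{b_k}(\eta)^{\alpha_k} \to 0$, the coefficient $\alpha_k R^{\alpha_k}/n \to 0$, and $|1 - \omega_n r_{b_k}(\eta)^{\alpha_k}/(\alpha_k\gamma_{\alpha_k})| \to |1 - 1| = 0$; together with $\sup_k \|b_k\|_{L^\infty} < \infty$ this forces the first two lines of the right-hand side of \eqref{eq:aphi-est} to $0$. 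The third line is bounded by $\eta$ times a constant uniform in $k$, and the fourth line tends to $0$ since $r_{b_k}(\eta)^{\alpha_k + 1} \to 1$ while it is multiplied by $r_{b_k}(\eta) \le 1$... more carefully, $r_{b_k}(\eta)^{\alpha_k+1}/((\alpha_k+1)\gamma_{\alpha_k})$: here $(\alpha_k+1)\gamma_{\alpha_k} = \gamma_{\alpha_k}(\alpha_k+1) \to \infty$ since $\gamma_{\alpha_k} \to \infty$ as $\alpha_k \to 0^+$ (from $\alpha_k\gamma_{\alpha_k}\to\omega_n$), so this term vanishes. Thus $\limsup_{k\to\infty}\|\mathcal{I}^{\alpha_k}(b_k\phi) - b_k\phi\|_{L^\infty} \le C\eta$ for a constant independent of $\eta$, and letting $\eta \to 0^+$ gives \eqref{eq:aphi-lim}.

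The main obstacle I anticipate is bookkeeping rather than conceptual: one must carefully track the compact support of $b\phi$ to control the tail integrals (distinguishing $x$ inside versus outside a fixed ball) and organize the various geometric constants so they collapse exactly into the stated $\max$ and the three remaining terms; getting the exact coefficients in \eqref{eq:aphi-est} — especially reconciling the "deep interior" contribution $\alpha R^\alpha/n$ with the annular contribution $(3R)^\alpha - r_b(\eta)^\alpha$ into a single $\max$ — requires some care, but none of it is deep. The only genuinely essential analytic input is the identity $r_{b_k}(\eta)^{\alpha_k} = \exp(\alpha_k \log r_{b_k}(\eta))$ together with hypothesis \eqref{eq:bk-conv}, which is precisely the mechanism by which the compatibility condition enters, and the asymptotics \eqref{eq:gammaa} for $\alpha\gamma_\alpha$.
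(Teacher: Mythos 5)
Your strategy---split the Riesz average at radius $\rho=r_b(\eta)$, compare the local part with $b(x)\phi(x)$ via the product rule $b(y)\phi(y)-b(x)\phi(x)=(b(y)-b(x))\phi(y)+b(x)(\phi(y)-\phi(x))$, control the tail using $\supp\phi\subseteq B_R$ with the dichotomy $x\in B_{2R}$ versus $x\notin B_{2R}$, and pass to the limit via $r_{b_k}(\eta)^{\alpha_k}=\exp(\alpha_k\log r_{b_k}(\eta))\to 1$ and $\alpha\gamma_\alpha\to\omega_n$---is exactly the paper's. But your opening display is not an identity, and the pieces are not even individually defined. With $\supp\phi\subseteq B_R$, for large $|y|$ the numerator of your first integral reduces to $-b(x)\phi(x)$, so its integrand $\sim |y|^{\alpha-n}$ is not integrable at infinity and the first term diverges whenever $b(x)\phi(x)\ne 0$; the same holds for the third term. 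Even formally, the three pieces sum to
\[
\mathcal I^\alpha(b\phi)(x)-b(x)\phi(x)-\frac{2\,b(x)\phi(x)}{\gamma_\alpha}\int_{\R^n\setminus B_\rho(x)}\frac{\de y}{|y-x|^{n-\alpha}},
\]
i.e.\ there is a spurious (divergent) extra term: the sign of the third integral should be $+$, and even then the two infinities cancel only after combining the first and third pieces. The decomposition that actually yields \eqref{eq:aphi-est}, and is what you implicitly estimate afterwards, restricts the difference integral to $B_\rho(x)$ and keeps $b(y)\phi(y)$, not $b(x)\phi(x)$, in the tail:
\[
\mathcal I^\alpha(b\phi)(x)-b(x)\phi(x)=\frac{1}{\gamma_\alpha}\int_{B_\rho(x)}\frac{b(y)\phi(y)-b(x)\phi(x)}{|y-x|^{n-\alpha}}\,\de y+\left(\frac{\omega_n\rho^\alpha}{\alpha\gamma_\alpha}-1\right)b(x)\phi(x)+\frac{1}{\gamma_\alpha}\int_{\R^n\setminus B_\rho(x)}\frac{b(y)\phi(y)}{|y-x|^{n-\alpha}}\,\de y.
\]

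Two further slips in the bookkeeping. The $\alpha R^\alpha/n$ entry in the $\max$ is the far-field bound for $|x|>2R$---there $b(x)\phi(x)=0$ and $|x-y|\ge R$ for $y\in\supp\phi$, so only the tail survives and $\mathcal L^n(B_R)/(\gamma_\alpha R^{n-\alpha})=\omega_nR^\alpha/(n\gamma_\alpha)$ is the constant---not a ``deep interior'' contribution as you describe it. And estimating $|b(y)\phi(y)-b(x)\phi(x)|\le 2\|b\|_{L^\infty}\|\phi\|_{L^\infty}$ on the annulus $B_{3R}(x)\setminus B_\rho(x)$ gives a constant twice that in \eqref{eq:aphi-est}; with the corrected decomposition one only needs $|b(y)\phi(y)|\le\|b\|_{L^\infty}\|\phi\|_{L^\infty}$ there, recovering the stated bound. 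Your limit argument (the last paragraph) is correct as written and matches the paper, including the observation that $\gamma_{\alpha_k}\to\infty$ kills the gradient term.
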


\begin{proof}
Let $\eta>0$ be fixed. For all $x\in\R^n$ we define
\begin{align*}
b_\eta^1(x)\coloneqq\frac{1}{\gamma_{\alpha}}\int_{B_{r_b(\eta)}(x)}\frac{b(y)\phi(y)}{|x-y|^{n-\alpha}}\,\de y,\qquad b_\eta^2(x)\coloneqq\frac{1}{\gamma_{\alpha}}\int_{\R^n\setminus B_{r_b(\eta)}(x)}\frac{b(y)\phi(y)}{|x-y|^{n-\alpha}}\,\de y.
\end{align*}
For all $x\in B_{2R}$ we have $B_R\subset B_{3R}(x)$, which implies 
\begin{align*}
|b_\eta^2(x)|&\le \frac{1}{\gamma_{\alpha}}\int_{B_{3R}(x)\setminus B_{r_b(\eta)}(x)}\frac{|b(y)||\phi(y)|}{|x-y|^{n-\alpha}}\,\de y\le \left((3R)^\alpha-r_b(\eta)^\alpha\right)\frac{\omega_n}{\alpha\gamma_{\alpha}}\|b\|_{L^\infty(\R^n)}\|\phi\|_{L^\infty(B_R)}.
\end{align*}
Moreover, for all $x\in\R^n\setminus B_{2R}$ and $y\in B_R$ we get
\[
|x-y|\ge |x|-|y|\ge R.
\]
Thus, for $x\in\R^n\setminus B_{2R}$
\[
|b_\eta^2(x)|\le \frac{\omega_n R^{\alpha}}{n\gamma_{\alpha}}\|b\|_{L^\infty(\R^n)}\|\phi\|_{L^\infty(B_R)}.
\]
Hence,
\begin{equation}\label{eq:aphi-est1}
\|b_\eta^2\|_{L^\infty(\R^n)}\le \max\left\{(3R)^\alpha-r_b(\eta)^\alpha,\frac{\alpha R^{\alpha}}{n}\right\}\frac{\omega_n}{\alpha\gamma_{\alpha}}\|b\|_{L^\infty(\R^n)}\|\phi\|_{L^\infty(B_R)}.
\end{equation}
Moreover, for all $x\in\R^n$ we can write
\[
b(x)\phi(x)=\frac{1}{\gamma_{\alpha}}\int_{B_{r_b(\eta)}(x)}\frac{b(x)\phi(x)}{|x-y|^{n-\alpha}}\,\de y+b(x)\phi(x)\left(1-\frac{\omega_n r_b(\eta)^\alpha}{\alpha\gamma_{\alpha}}\right).
\]
Thus, by using~\eqref{eq:rb} we have
\begin{align}\label{eq:aphi-est2}
|b_\eta^1(x)-b(x)\phi(x)| 
&\le \left|1-\frac{\omega_n r_b(\eta)^\alpha}{\alpha\gamma_{\alpha}}\right|\|b\|_{L^\infty(\R^n)}\|\phi\|_{L^\infty(B_R)}+\frac{1}{\gamma_{\alpha}}\int_{B_{r_b(\eta)}(x)}\frac{|b(y)\phi(y)-b(x)\phi(x)|}{|x-y|^{n-\alpha}}\,\de y\nonumber\\
&\le \left|1-\frac{\omega_n r_b(\eta)^\alpha}{\alpha\gamma_{\alpha}}\right|\|b\|_{L^\infty(\R^n)}\|\phi\|_{L^\infty(B_R)}+\frac{\omega_n r_b(\eta)^\alpha \eta}{\alpha\gamma_{\alpha}}\|\phi\|_{L^\infty(B_R)}\nonumber\\
&\quad+\frac{\omega_nr_b(\eta)^{\alpha+1}}{(\alpha+1)\gamma_{\alpha}}\|b\|_{L^\infty(\R^n)}\|\nabla \phi\|_{L^\infty(B_R)}.
\end{align}
By combining~\eqref{eq:aphi-est1} and~\eqref{eq:aphi-est2} we get~\eqref{eq:aphi-est}. 

Finally, for all $\eta>0$, by~\eqref{eq:gammaa},~\eqref{eq:aphi-est}, and~\eqref{eq:bk-conv} we get
\[
\limsup_{k\to\infty}\|\mathcal I^{\alpha_k}(b_k\phi)-b_k\phi\|_{L^\infty(\R^n)}\le \eta\|\phi\|_{L^\infty(B_R)},
\]
which gives~\eqref{eq:aphi-lim}. 
\end{proof} 

We can finally prove the $\Gamma$-limsup inequality. 

\begin{proposition}[$\Gamma$-limsup inequality]\label{prop:upper-bound}
Let $\Omega\subset\R^n$ be a bounded open set with Lipschitz boundary. Let $(\psi_k)_k\subset\mathcal G(\lambda,\Lambda)$ satisfy the uniform approximation condition~\eqref{A}. Let $(s_k)_k \subset (0,1)$ satisfy $s_k\to 1$ as $k\to\infty$ and the compatibility condition~\eqref{eq:compatibility}. Assume~\eqref{eq:psi-local}. Then, for all functions $u\in BV(\Omega;T)$ there exists a sequence $(w_k)_k\subset L^1(\Omega)$ satisfying
\[
w_k\to u\quad\text{strongly in $L^1(\Omega)$ as $k\to\infty$},\qquad \limsup_{k\to\infty}\mathcal F_k(w_k)\le \mathcal F_0 (u). 
\]
\end{proposition}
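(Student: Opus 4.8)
The plan is to reduce the fractional $\Gamma$-limsup to the local one proven in Theorem~\ref{thm:local-gamma}, exploiting the uniform approximation condition~\eqref{A} together with the duality/Riesz-potential bookkeeping encapsulated in Lemma~\ref{lem:a-app}. First I would take $u\in BV(\Omega;T)$ and invoke the local $\Gamma$-convergence (Theorem~\ref{thm:local-gamma}): there exists a recovery sequence $(z_k)_k\subset BV(\Omega;T)$ with $z_k\to u$ strongly in $L^1(\Omega)$ and $\mathcal E_k(z_k,\Omega)\to\mathcal F_0(u)$. Since $\Omega$ has Lipschitz boundary, I would use Proposition~\ref{prop:extension} to extend each $z_k$ to $w_k\in BV(\R^n;T)$ with $|Dw_k|(\partial\Omega)=0$; keeping in mind the $L^\infty$-bound $\|w_k\|_{L^\infty}\le\max_i|c_i|$ is automatic since $w_k$ is $T$-valued. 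Then $w_k\in BV^{s_k}(\Omega;T)$ by Proposition~\ref{prop:I1-sDu}, so $w_k$ is an admissible competitor for $\mathcal F_k$ in the sense of~\eqref{eq:Fk}, and $\mathcal F_k(w_k)\le \int_\Omega \psi_k\big(y,\frac{\de D^{s_k}w_k}{\de|D^{s_k}w_k|}\big)\,\de|D^{s_k}w_k|$. The task thus becomes showing
\[
\limsup_{k\to\infty}\int_\Omega \psi_k\Big(y,\frac{\de D^{s_k}w_k}{\de|D^{s_k}w_k|}(y)\Big)\,\de|D^{s_k}w_k|(y)\le \mathcal F_0(u)=\lim_{k\to\infty}\mathcal E_k(w_k,\Omega).
\]

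The core of the argument is comparing the fractional energy of $w_k$ with its local energy $\mathcal E_k(w_k,\Omega)$, and here the uniform approximation condition is essential. Fix $\delta>0$ and let $N$, $(b_k^i)_{i=1}^N$, $(\varphi_k^i)_{i=1}^N$ be as in~\eqref{A}. By property (A3), replacing $\psi_k$ by $\sum_{i=1}^N b_k^i(x)\varphi_k^i(\xi)$ costs at most $\delta\,|D^{s_k}w_k|(\Omega)$ in the fractional energy and at most $\delta\,|Dw_k|(\Omega)$ in the local energy, both of which are uniformly bounded (the former via Proposition~\ref{prop:lsc-Ds}-type estimates or the explicit bound~\eqref{eq:V1s}; the latter by the local recovery property). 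So it suffices to estimate, for each $i$,
\[
\int_\Omega b_k^i(y)\,\varphi_k^i\Big(\frac{\de D^{s_k}w_k}{\de|D^{s_k}w_k|}(y)\Big)\,\de|D^{s_k}w_k|(y)
\]
against $\int_\Omega b_k^i(y)\,\varphi_k^i\big(\frac{\de Dw_k}{\de|Dw_k|}(y)\big)\,\de|Dw_k|(y)$. The bridge is the identity $D^{s_k}w_k=\mathcal I^{1-s_k}Dw_k$ from Proposition~\ref{prop:I1-sDu} (applied after approximating $w_k$ by $C_c^\infty$ functions as in Proposition~\ref{prop:app}, so that $\nabla^{s_k}w_k=\mathcal I^{1-s_k}\nabla w_k$ pointwise). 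Using convexity and positive $1$-homogeneity of $\varphi_k^i$, together with the fact that $\mathcal I^{1-s_k}$ acts by averaging against the nonnegative kernel $\frac{1}{\gamma_{1-s_k}}|x-y|^{-n+(1-s_k)}$ (a subprobability density after normalization), one gets a Jensen-type bound: the fractional energy density is controlled by a convolution of the local energy density. Then a duality step moves the Riesz operator onto the weight $b_k^i$, producing the error term $\|\mathcal I^{1-s_k}b_k^i - b_k^i\|_{L^\infty}$ times $\int_\Omega \varphi_k^i(\frac{\de Dw_k}{\de|Dw_k|})\,\de|Dw_k|\le \Lambda|Dw_k|(\Omega)$, plus analogous but lower-order terms from the $C_c^1$ test function $\phi$ in Lemma~\ref{lem:a-app}.

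I expect the main obstacle to be the rigorous execution of this "move the Riesz operator to the weight" step, and controlling the resulting error uniformly. Precisely: one writes the fractional energy using a cut-off $\phi\in C_c^1(B_R)$ with $\phi\equiv 1$ on a neighborhood of $\Omega$, approximates $w_k$ by smooth $w_{k,m}$, integrates against test vector fields via~\eqref{eq:struct}/Proposition~\ref{prop:dual}, and uses $\mathcal I^{1-s_k}(b_k^i\phi)$ in place of $b_k^i$ at the cost quantified by Lemma~\ref{lem:a-app}. The compatibility condition~\eqref{eq:compatibility}, i.e. $(1-s_k)\log r_{b_k^i}(\eta)\to0$, is exactly what makes $\|\mathcal I^{1-s_k}(b_k^i\phi)-b_k^i\phi\|_{L^\infty}\to0$ via~\eqref{eq:aphi-lim}, since the dominant term in~\eqref{eq:aphi-est} behaves like $(1-s_k)^{-1}\big((3R)^{1-s_k}-r_b(\eta)^{1-s_k}\big)\omega_n/(\alpha\gamma_\alpha)\to\eta$-controlled quantities only under that scaling. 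Care is needed because the constants in~\eqref{eq:V1s} and Proposition~\ref{prop:bvss0} blow up or degenerate as $s_k\to1$, but Remarks~\ref{rem:behaviour} and~\ref{rem:unif} show they stabilize to finite limits, so all error terms carrying these constants remain bounded. Passing to the limit $k\to\infty$ first, then $\delta\to0$, and finally extracting a diagonal recovery sequence via the Urysohn property (since the construction depends on $\delta$), yields $\limsup_k \mathcal F_k(w_k)\le\mathcal F_0(u)$. A secondary technical point is ensuring the competitors $w_k$ genuinely lie in $\mathcal A^{s_k}(u,\Omega)$, which is immediate from $w_k=u$ a.e.\ in $\Omega$ and $w_k\in BV^{s_k}(\Omega;T)$, the latter following from Proposition~\ref{prop:I1-sDu} since $w_k\in BV(\R^n;T)\subset L^\infty\cap BV(\R^n)$.
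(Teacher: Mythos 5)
Your overall strategy---using the local recovery sequence from Theorem~\ref{thm:local-gamma}, reducing to densities of the form $\sum_i b_k^i\varphi_k^i$ via~\eqref{A}, exploiting $D^{s_k}w=\mathcal I^{1-s_k}Dw$, applying Jensen's inequality for sublinear functions of measures, moving the Riesz operator onto $b_k^i\phi$ by Fubini, and invoking Lemma~\ref{lem:a-app} with the compatibility condition---is indeed the backbone of the paper's proof of Proposition~\ref{prop:upper-bound}. However, there is a genuine gap in your handling of the boundary region that the paper resolves by a domain-enlargement step you omit.

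Concretely: the cut-off $\phi$ needed in the Fubini/duality step must satisfy $\phi\equiv 1$ on $\overline\Omega$ (so that $\int_\Omega b_k^i\varphi_k^i(\nabla^{s_k}w_k)\le \int_{\R^n}b_k^i\phi\,\mathcal I^{1-s_k}\varphi_k^i(Dw_k)$) and must be a \emph{fixed} compactly supported $C^1_c$ function (Lemma~\ref{lem:a-app} requires a fixed $\phi$; its estimate degenerates if $\|\nabla\phi\|_{L^\infty}$ blows up). Therefore $\supp\phi$ is a fixed neighborhood of $\overline\Omega$, and after applying Fubini and replacing $\mathcal I^{1-s_k}(b_k^i\phi)$ by $b_k^i\phi$ you are left with the term $\int_{\supp\phi}b_k^i\phi\,\de\varphi_k^i(Dw_k)$, which picks up $\varphi_k^i(Dw_k)$ on $\supp\phi\setminus\Omega$. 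If you only take the local recovery sequence $z_k$ on $\Omega$ and then extend each $z_k$ to $\R^n$ via Proposition~\ref{prop:extension}, the resulting $w_k$ has a jump on $\partial\Omega$ between the inner trace of $z_k$ and the outer extension, and this jump mass is \emph{not} controlled (trace is not continuous under $L^1$ convergence). Hence $\int_{\supp\phi\setminus\Omega}$ does not vanish in the limit and the bound $\limsup_k\mathcal F_k(w_k)\le\mathcal F_0(u)$ does not follow.

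The paper's fix is to first extend $u$ (not $z_k$) to a fixed $v\in BV(\R^n;T)$ with $|Dv|(\partial\Omega)=0$, then choose an \emph{enlarged} Lipschitz domain $\Omega^\varepsilon\supset\supset\Omega$ with $|Dv|(\Omega^\varepsilon\setminus\Omega)<\varepsilon$, build the local recovery sequence $v_k^\varepsilon$ on $\Omega^\varepsilon$ (so $\limsup_k\mathcal E_k(v_k^\varepsilon,\Omega^\varepsilon)\le\mathcal F_0(v,\Omega^\varepsilon)\le\mathcal F_0(u)+\Lambda\varepsilon$), and extend by the constant $c_1$ outside $\Omega^\varepsilon$. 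Now the cut-off $\phi\in C^1_c(\Omega^\varepsilon)$ is supported strictly inside $\Omega^\varepsilon$, so the jump of $w_k^\varepsilon$ on $\partial\Omega^\varepsilon$ is cut away, and the remaining integral $\int_{\Omega^\varepsilon}b_k^i\phi\,\de\varphi_k^i(Dv_k^\varepsilon)$ is controlled by $\mathcal E_k(v_k^\varepsilon,\Omega^\varepsilon)$ plus the $\delta$-error from~\eqref{A}. One then passes to the limit in the order $k\to\infty$, $\delta\to 0$, $\varepsilon\to 0$. This three-parameter ordering is the piece your proposal is missing. A second, minor slip: at the end you write that $w_k=u$ a.e.\ in $\Omega$, but $w_k|_\Omega=z_k\ne u$; the recovery sequence in $L^1(\Omega)$ is $z_k$ (equivalently $w_k|_\Omega$), and one uses $\mathcal F_k(z_k)\le\int_\Omega\psi_k(y,\tfrac{\de D^{s_k}w_k}{\de|D^{s_k}w_k|})\,\de|D^{s_k}w_k|$ because $w_k\in\mathcal A^{s_k}(z_k,\Omega)$.
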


\begin{proof}
Let $u\in BV(\Omega;T)$ be fixed. By Proposition~\ref{prop:extension} there exists a function $v\in BV(\R^n;T)$ such that 
\[
v=u\quad\text{a.e.\ in $\Omega$},\qquad |Dv|(\partial\Omega)=0.
\]
For $\varepsilon>0$ let $\Omega^\varepsilon\subset\R^n$ be a bounded open set with Lipschitz boundary satisfying
\begin{equation}\label{eq:eta}
\Omega\subset\subset\Omega^\varepsilon,\qquad |Dv|(\Omega^\varepsilon\setminus\Omega)<\varepsilon.
\end{equation}
By Theorem~\ref{thm:local-gamma}, there exists $(v_k^\varepsilon)_k\subset BV(\Omega^\varepsilon;T)$ such that 
\begin{equation}\label{eq:rec}
v_k^\varepsilon\to v\quad\text{strongly in $L^1(\Omega^\varepsilon)$ as $k\to\infty$},\qquad\limsup_{k\to\infty}\mathcal E_k(v_k^\varepsilon,\Omega^\varepsilon)\le\mathcal F_0 (v,\Omega^\varepsilon).
\end{equation}
In particular, since $(\psi_k)_k\subset\mathcal G(\lambda,\Lambda)$, the sequence $(v_k^\varepsilon)_k\subset BV(\Omega^\varepsilon;T)$ satisfies
\[
\sup_{k\in\N}|Dv_k^\varepsilon|(\Omega^\varepsilon)<\infty.
\]
For all $k\in\N$ we define
\begin{equation}\label{eq:wke}
w_k^\varepsilon(x) \coloneqq 
\begin{cases}
v_k^\varepsilon(x)&\quad\text{if $x\in\Omega^\varepsilon$},\\
 c_1 &\quad\text{if $x\in\R^n\setminus\Omega^\varepsilon$}.
\end{cases}
\end{equation}
Then, we have that $(w_k^\varepsilon)_k\subset BV(\R^n;T)$ and
\begin{align}\label{eq:Dww}
\sup_{k\in\N}|Dw_k^\varepsilon|(\R^n)\le \sup_{k\in\N}|Dv_k^\varepsilon|(\Omega^\varepsilon)+ (c_M-c_1) \mathcal H^{n-1}(\partial\Omega^\varepsilon)<\infty.
\end{align}
By Propositions~\ref{prop:I1-sDu} and~\ref{prop:Ds-nablas}, we deduce that $w_k^\varepsilon\in BV^{s_k}(\Omega;T)$ for all $k\in\N$ and 
\begin{equation}\label{eq:Dskwkeps}
D^{s_k}w_k^\varepsilon=\nabla^{s_k}w_k^\varepsilon\,\mathcal L^n=\mathcal I^{1-s_k}Dw_k^\varepsilon\,\mathcal L^n\quad\text{in $\mathcal M_b(\Omega;\R^n)$}.
\end{equation}
Moreover, in view of~\eqref{eq:V1s} and Remark~\ref{rem:behaviour}, we can find a constant $C>0$, independent of $k\in\N$, such that 
\begin{equation}\label{eq:Dskwk-est}
|D^{s_k}w_k^\varepsilon|(\Omega)\le C(1+|Dw_k^\varepsilon|(\R^n))\quad\text{for all $k\in\N$}.
\end{equation}

Let $\delta>0$ be fixed and let $(b_k^i)_{i=1}^N,(\varphi_k^i)_{i=1}^N\in C(\R^n)$ be the families of functions given by assumption~\eqref{A}. By exploiting~\eqref{eq:Dskwkeps} we have 
\begin{equation}\label{eq:varphiDswk}
\varphi_k^i\left(\frac{\de D^{s_k}w_k^\varepsilon}{ \de |D^{s_k}w_k^\varepsilon|}(y)\right) |D^{s_k}w_k^\varepsilon|(y)=\varphi_k^i(D^{s_k}w_k^\varepsilon)=\varphi_k^i(\nabla^{s_k}w_k^\varepsilon)\,\mathcal L^n\quad\text{in $\mathcal M_b(\Omega;\R^n)$}.
\end{equation}
Moreover, since $Dw_k^\varepsilon\in\mathcal M_b(\R^n;\R^n)$ all $k\in\N$, we derive that $\varphi_k^i(Dw_k^\varepsilon)\in \mathcal M_b(\R^n)$ for all $i\in\{1,\dots,N\}$. In particular, by~\eqref{eq:wke}
\begin{equation}\label{eq:varphiwk}
\varphi_k^i(Dw_k^\varepsilon)=\varphi_k^i(Dv_k^\varepsilon)=\varphi_k^i\left(\frac{\de Dv_k^\varepsilon}{\de |Dv_k^\varepsilon|}\right) |Dv_k^\varepsilon|\quad\text{in $\mathcal M_b(\Omega^\varepsilon;\R^n)$}.
\end{equation}
Thus, for all $k\in\N$ we get (see Proposition~\ref{prop:mizuta})
\begin{equation}\label{eq:Dskwkeps2}
\mathcal I^{1-s_k}\varphi_k^i(Dw_k^\varepsilon)\in L^1_{\rm loc}(\R^n)\quad\text{for all $i\in\{1,\dots,N\}$},
\end{equation}
and by~\eqref{eq:Dskwkeps} and Jensen's inequality for sublinear functions of measures (see~\cite[Theorem~1]{Goffman-Serrin})
\begin{equation}\label{eq:varphii}
\varphi_k^i(\nabla^{s_k}w_k^\varepsilon)=\varphi_k^i(\mathcal I^{1-s_k}Dw_k^\varepsilon)\le \mathcal I^{1-s_k}\varphi_k^i(Dw_k^\varepsilon)\quad\text{a.e.\ in $\R^n$ for all $i\in\{1,\dots,N\}$}.
\end{equation}

Let us fix a function $\phi\in C_c^1(\Omega^\varepsilon)$ such that $0\le \phi\le 1$ in $\Omega^\varepsilon$ and $\phi=1$ on $\overline\Omega$. By using~\eqref{A},~\eqref{eq:Dskwk-est},~\eqref{eq:varphiDswk}, and~\eqref{eq:varphii}, for all $k\in\N$ we get
\begin{align}
&\int_\Omega \psi_k\left(y,\frac{\de D^{s_k}w_k^\varepsilon}{\de |D^{s_k}w_k^\varepsilon|}(y)\right)\de |D^{s_k}w_k^\varepsilon|(y)\nonumber\\
&\le \sum_{i=1}^N\int_\Omega b_k^i(y)\varphi_k^i\left(\frac{\de D^{s_k}w_k^\varepsilon}{\de |D^{s_k}w_k^\varepsilon|}(y)\right)\de |D^{s_k}w_k^\varepsilon|(y)+\delta\sup_{k\in\N}|D^{s_k}w_k^\varepsilon|(\Omega)\nonumber\\
&\le \sum_{i=1}^N\int_\Omega b_k^i(y)\varphi_k^i(\nabla^{s_k}w_k^\varepsilon(y))\,\de y+C \delta(1+\sup_{k\in\N}|Dw_k^\varepsilon|(\R^n))\nonumber\\
&\le \sum_{i=1}^N\int_{\R^n}b_k^i(y)\phi(y)\mathcal I^{1-s_k}\varphi_k^i(Dw_k^\varepsilon)(y)\,\de y+C \delta(1+\sup_{k\in\N}|Dw_k^\varepsilon|(\R^n)).\label{eq:limsup1}
\end{align}
Thanks to Fubini's theorem, recalling that $\varphi^i_k(Dw_k^\varepsilon)\in\mathcal M_b(\R^n)$, the identity
\begin{equation}\label{eq:limsup2}
\sum_{i=1}^N\int_{\R^n}b_k^i(y)\phi(y)\mathcal I^{1-s_k}\varphi_k^i(Dw_k^\varepsilon)(y)\,\de y=\sum_{i=1}^N\int_{\R^n} \mathcal I^{1-s_k}(b_k^i \phi)(y)\,\de \varphi_k^i(Dw_k^\varepsilon)(y)
\end{equation}
holds for all $k\in\N$. Moreover, by using again~\eqref{A},~\eqref{eq:varphiwk}, and the fact that $\phi \in C^1_c(\Omega^\varepsilon)$ with $0\le\phi\le 1$ in $\Omega^\varepsilon$, we get
\begin{align}
&\sum_{i=1}^N\int_{\R^n} \mathcal I^{1-s_k}(b_k^i \phi)(y)\,\de \varphi_k^i(Dw_k^\varepsilon)(y)\nonumber\\
&\le \sum_{i=1}^N\int_{\Omega^\varepsilon}b_k^i(y)\phi(y)\de \varphi_k^i(Dw_k^\varepsilon)(y) +\sup_{k\in\N}|Dw_k^\varepsilon|(\R^n)\sum_{i=1}^N\|\varphi_k^i
\|_{C(\mathbb S^{n-1})}\|\mathcal I^{1-s_k}(b_k^i \phi)-b_k^i \phi\|_{L^\infty(\R^n)}\nonumber\\
&\le \sum_{i=1}^N\int_{\Omega^\varepsilon}b_k^i(y)\varphi_k^i\left(\frac{\de Dv_k^\varepsilon}{\de |Dv_k^\varepsilon|}\right)\de |Dv_k^\varepsilon|(y) +\sup_{k\in\N}|Dw_k^\varepsilon|(\R^n)\sum_{i=1}^N\|\varphi_k^i
\|_{C(\mathbb S^{n-1})}\|\mathcal I^{1-s_k}(b_k^i \phi)-b_k^i \phi\|_{L^\infty(\R^n)}\nonumber\\
&\le \mathcal E_k(v_k^\varepsilon,\Omega^\varepsilon)+\delta\sup_{k\in\N}|Dv_k^\varepsilon|(\Omega^\varepsilon)+\sup_{k\in\N}|Dw_k^\varepsilon|(\R^n)\sum_{i=1}^N\|\varphi_k^i
\|_{C(\mathbb S^{n-1})}\|\mathcal I^{1-s_k}(b_k^i \phi)-b_k^i \phi\|_{L^\infty(\R^n)}.\label{eq:limsup3}
 \end{align}
Thanks to~\eqref{A} and~\eqref{eq:compatibility}, we can apply Lemma~\ref{lem:a-app} to deduce that
\begin{equation}\label{eq:limsup4}
\lim_{k\to\infty}\left(\sum_{i=1}^N\|\varphi_k^i
\|_{C(\mathbb S^{n-1})}\|\mathcal I^{1-s_k}(b_k^i \phi)-b_k^i \phi\|_{L^\infty(\R^n)}\right)=0.
\end{equation}
Therefore, by combining~\eqref{eq:rec} with~\eqref{eq:Dww} and~\eqref{eq:limsup1}--\eqref{eq:limsup4}, for all $\varepsilon,\delta>0$ we have
\begin{align*}
&\limsup_{k\to\infty}\int_\Omega \psi_k\left(y,\frac{\de D^{s_k}w_k^\varepsilon}{\de |D^{s_k}w_k^\varepsilon|}(y)\right)\de |D^{s_k}w_k^\varepsilon|(y)\\
% &\le\limsup_{k\to\infty}\sum_{i=1}^N\int_\Omega b_k^i(y)\varphi_k^i(\nabla^{s_k}w_k^\varepsilon(y))\,\de y+C \delta(1+\sup_{k\in\N}|Dw_k^\varepsilon|(\R^n))\\
% &=\limsup_{k\to\infty}\sum_{i=1}^N\int_{\R^n} \mathcal I^{1-s_k}(b_k^i \phi)(y)\,\de \varphi_k^i(Dw_k^\varepsilon)(y)+C \delta(1+\sup_{k\in\N}|Dw_k^\varepsilon|(\R^n))\\
&\le \mathcal F_0 (v,\Omega^\varepsilon)+C \delta(1+\sup_{k\in\N}|Dw_k^\varepsilon|(\R^n))+\delta\sup_{k\in\N}|Dv_k^\varepsilon|(\Omega^\varepsilon).
\end{align*} 
By sending $\delta\to 0$, for all $\varepsilon>0$ we obtain by~\eqref{eq:Dww}
\begin{align*}
\limsup_{k\to\infty}\mathcal F_k(w_k^\varepsilon)&\le\limsup_{k\to\infty}\int_\Omega \psi_k\left(y,\frac{\de D^{s_k}w_k^\varepsilon}{\de |D^{s_k}w_k^\varepsilon|}(y)\right)\de |D^{s_k}w_k^\varepsilon|(y)\\
&\le \mathcal F_0 (v,\Omega^\varepsilon)\le \mathcal F_0 (u)+\Lambda\varepsilon,
\end{align*} 
where the last step follows from~\eqref{eq:eta} and~\eqref{eq:psi0}. Sending $\varepsilon\to 0$ we can eventually conclude by a diagonal argument. 
\end{proof} 

%--------------------------
% Acknowledgments
%--------------------------

% \section*{Statements and declarations}

% \noindent {\bf Conflict of interest.} The authors have no conflicts of interest to declare.

% \smallskip

% \noindent {\bf Data availability.} The authors do not analyze or generate any datasets.

% \smallskip

% \noindent {\bf Acknowledgments.} 

\section*{Acknowledgments}

The work of S.\ Almi was funded by the FWF Austrian Science Fund through the Project 10.55776/P35359 and by the University of Naples Federico II through FRA Project ``ReSinApas”.

The work of S.\ Almi and F.\ Solombrino is part of the MUR - PRIN 2022, project Variational Analysis of Complex Systems in Materials Science, Physics and Biology, No.\ 2022HKBF5C, funded by European Union NextGenerationEU. 

The work of M.\ Caponi was founded by the European Union NextGenerationEU under the Italian Ministry of University and Research (MUR) National Centre for HPC, Big Data and Quantum Computing (CN\_00000013 – CUP: E13C22001000006). 

S.\ Almi, M.\ Caponi, and F.\ Solombrino are part of the Gruppo Nazionale per l'Analisi Matematica, la Probabilit\`a e le loro Applicazioni (INdAM-GNAMPA), and acknowledge the support of the INdAM-GNAMPA 2025 Project ``DISCOVERIES - Difetti e Interfacce in Sistemi Continui: un’Ottica Variazionale in Elasticità con Risultati Innovativi ed Efficaci Sviluppi'' (CUP: E5324001950001).

%--------------------------
% Appendix
%--------------------------

\appendix

\section{Auxiliary results}\label{app:a}

This appendix is devoted to the proofs of Proposition~\ref{prop:gl-est1}, the approximation result of Proposition~\ref{prop:app}, and Lemma~\ref{lem:BVO-comp}. We begin with the $L^p$-type estimates for the Riesz fractional gradient and divergence.

\begin{proof}[Proofs of Proposition~\ref{prop:gl-est1}]
We only prove~\eqref{eq:est-1} for $p\in [1,\infty)$, since the other estimates follow by a similar argument. 

Let $p\in [1,\infty)$ be fixed. We recall the following Minkowski's inequality
\begin{equation*}
\left(\int_{\R^n}\left(\int_{\R^n}|f(x,y)|\,\de y\right)^p \de x \right)^{\frac{1}{p}}
\le\int_{\R^n}\left(\int_{\R^n}|f(x,y)|^p\,\de x\right)^{\frac{1}{p}}\de y,
\end{equation*}
which holds for all measurable functions $f \colon\R^n \times\R^n\to\R $. Then, we have
\begin{align*}
\|\nabla^s\psi\|_{L^p(\R^n)}&\le\mu_s\int_{\R^n}\frac{\|\psi(\,\cdot\,+h)-\psi(\,\cdot\,)\|_{L^p(\R^n)}}{|h|^{n+s}}\,\de h\\
&\le \mu_s\left(\|\nabla\psi\|_{L^p(\R^n)}\int_{B_R}\frac{1}{|h|^{n+s-1}}\,\de h+2\|\psi\|_{L^p(\R^n)}\int_{\R^n\setminus B_R}\frac{1}{|h|^{n+s}}\,\de h\right)\\
&=\frac{\omega_nR^{1-s}\mu_s}{1-s}\|\nabla\psi\|_{L^p(\R^n)}+\frac{2\omega_n\mu_s}{sR^s}\|\psi\|_{L^p(\R^n)}.
\end{align*}
By minimizing on $R>0$ we derive~\eqref{eq:est-1}. 
\end{proof}

To prove Proposition~\ref{prop:app}, we begin by recalling some preliminary results. First, we show that any function $\psi \in L^\infty(\R^n) \cap C^1(\R^n)$ admits a weak $s$-fractional gradient, which can be expressed by formula~\eqref{eq:nablas}. The key observation is that formula~\eqref{eq:nablas} does not require $\psi$ to have compact support. Consequently, the integration by parts formula~\eqref{eq:dual} can also be extended to all functions $\psi\in L^\infty(\R^n) \cap C^1(\R^n)$ without the assumption of compact support.

\begin{lemma}\label{lem:weak-grad}
Let $s\in (0,1)$ and $\psi\in L^\infty(\R^n)\cap C^1(\R^n)$. Then, $\psi$ has a weak $s$-fractional gradient $v\in L^1_{\rm loc}(\R^n;\R^n)$ and
\begin{equation}\label{eq:v}
v(x)=\mu_s\int_{\R^n}\frac{(\psi(y)-\psi(x))(y-x)}{|y-x|^{n+s+1}}\,\de y\quad\text{for all $x\in\R^n$}.
\end{equation} 
\end{lemma}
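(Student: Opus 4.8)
The plan is to verify that the integral in~\eqref{eq:v} converges for every $x \in \R^n$, that the resulting function $v$ lies in $L^1_{\rm loc}(\R^n;\R^n)$, and finally that $v$ satisfies the defining identity~\eqref{eq:weak-gradient} for the weak $s$-fractional gradient, namely $\int_{\R^n} v \cdot \Psi \, \de y = -\int_{\R^n} \psi \, \div^s \Psi \, \de y$ for all $\Psi \in C^1_c(\R^n;\R^n)$. The essential point, as the text preceding the statement emphasizes, is that formula~\eqref{eq:nablas} never used compact support of $\psi$ — only local $C^1$-regularity near the singularity $y = x$ and decay (here, boundedness) at infinity.

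\emph{Step 1: pointwise convergence of the integral and local integrability of $v$.} Fix $x \in \R^n$. Split the integral in~\eqref{eq:v} into the region $|y-x| \le 1$ and $|y-x| > 1$. On $|y-x|>1$ one bounds the integrand by $2\|\psi\|_{L^\infty(\R^n)} |y-x|^{-(n+s)}$, which is integrable since $n+s > n$. On $|y-x| \le 1$, since $\psi \in C^1$ one has $|\psi(y)-\psi(x)| \le \big(\sup_{B_1(x)}|\nabla \psi|\big)\,|y-x|$, so the integrand is controlled by a constant times $|y-x|^{-(n+s-1)}$, which is integrable near $y=x$ because $n+s-1 < n$. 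Hence $v(x)$ is well-defined for every $x$. For local integrability, one argues as in~\cite[Section~2]{CS19}: estimate $\|v\|_{L^1(B_R)}$ by exchanging orders of integration (Tonelli) after the same near/far splitting, using on the near part the uniform bound $\sup_{B_{R+1}}|\nabla\psi|$ (valid because $\psi \in C^1$ on the compact $\overline{B_{R+1}}$) and on the far part $\|\psi\|_{L^\infty(\R^n)}$; this gives $v \in L^1(B_R;\R^n)$ for every $R>0$, hence $v \in L^1_{\rm loc}(\R^n;\R^n)$. Alternatively, one may invoke Proposition~\ref{prop:gl-est1} applied to a truncation of $\psi$ on balls and pass to the limit, but the direct estimate is cleaner.

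\emph{Step 2: the integration-by-parts identity.} Let $\Psi \in C^1_c(\R^n;\R^n)$ with $\supp \Psi \subset B_\rho$. Write, using the definition of $\div^s$,
\[
\int_{\R^n} \psi(x)\, \div^s \Psi(x)\, \de x = \mu_s \int_{\R^n}\int_{\R^n} \psi(x)\,\frac{(\Psi(y)-\Psi(x))\cdot(y-x)}{|y-x|^{n+s+1}}\,\de y\, \de x.
\]
The goal is to symmetrize in $x$ and $y$ and recognize $\int v \cdot \Psi$. The only subtlety compared with the compactly supported case of Proposition~\ref{prop:dual} is justifying Fubini: the double integral is not absolutely convergent globally because $\psi$ need not decay. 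The standard fix is a cut-off argument: choose $\chi_m \in C^\infty_c(\R^n)$ with $\chi_m = 1$ on $B_m$, $0 \le \chi_m \le 1$, $|\nabla \chi_m| \le C/m$, $\supp \chi_m \subset B_{2m}$, apply Proposition~\ref{prop:dual} to the compactly supported function $\chi_m \psi$ (which gives the identity with $\nabla^s(\chi_m\psi)$ replaced by its Riesz gradient), and let $m \to \infty$. On the right-hand side, $\int \chi_m \psi \,\div^s\Psi \to \int \psi \,\div^s\Psi$ by dominated convergence, using that $\div^s\Psi \in L^1(\R^n)$ (by Proposition~\ref{prop:gl-est1} with $p=1$, applied to $\Psi \in C^1_c$) and $\|\chi_m\psi\|_{L^\infty} \le \|\psi\|_{L^\infty}$. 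On the left-hand side, $\int \nabla^s(\chi_m\psi)\cdot \Psi$: by the Leibniz formula~\eqref{eq:leibniz}, $\nabla^s(\chi_m\psi) = \chi_m \nabla^s\psi + \psi\,\nabla^s\chi_m + \nabla^s_{\rm NL}(\chi_m,\psi)$ — but to make sense of $\nabla^s \psi$ here one should rather argue directly: show that on the fixed ball $B_\rho$, $\nabla^s(\chi_m\psi)(x) \to v(x)$ pointwise for $x \in B_\rho$ as $m \to \infty$ (the difference is an integral over $|y-x| \gtrsim m$ of a term bounded by $\|\psi\|_{L^\infty} |y-x|^{-(n+s)}$, which tends to $0$), together with a uniform-in-$m$ $L^1(B_\rho)$ bound on $\nabla^s(\chi_m\psi)$ so that dominated convergence applies against $\Psi \in L^\infty(B_\rho)$. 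This yields $\int_{\R^n} v \cdot \Psi = \lim_m \int \nabla^s(\chi_m\psi)\cdot\Psi = -\lim_m \int \chi_m\psi\,\div^s\Psi = -\int \psi\,\div^s\Psi$, which is exactly~\eqref{eq:weak-gradient}.

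\emph{Main obstacle.} The only genuine difficulty is the Fubini/cut-off justification in Step~2 — everything else is a direct repetition of the estimates in~\cite[Section~2]{CS19}. Concretely, the delicate point is getting a bound on $\nabla^s(\chi_m\psi)$ in $L^1(B_\rho)$ that is uniform in $m$; this follows by the near/far splitting exactly as in Step~1, noting that $\chi_m\psi$ is bounded uniformly in $m$ by $\|\psi\|_{L^\infty(\R^n)}$ and has $C^1$-norm on any fixed ball controlled independently of $m$ once $m$ is large (since $\chi_m \equiv 1$ there). Once these two ingredients — pointwise convergence $\nabla^s(\chi_m\psi) \to v$ on $B_\rho$ and uniform $L^1(B_\rho)$ bounds — are in place, the dominated convergence theorem closes the argument and the proof is complete.
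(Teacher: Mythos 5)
Your proof is correct, but the route in Step~2 differs from the paper's. The paper proves the integration-by-parts identity directly: it inserts the regularization $\int_{\R^n\setminus B_\varepsilon(x)}$ around the diagonal singularity (justified by dominated convergence in $x$, using the $L^1_{\rm loc}$ bounds from Step~1), applies Fubini on the truncated double integral (which is absolutely convergent because $\Psi$ has compact support and $\psi$ is bounded, giving a kernel $\lesssim |x-y|^{-(n+s)}\mathbf{1}_{|x-y|>\varepsilon}$ localized in $x$), swaps the roles of $x$ and $y$ using the symmetry of the kernel and the cancellation $\int_{|x-y|>\varepsilon}(x-y)|x-y|^{-(n+s+1)}\,\de y=0$, and then removes the $\varepsilon$-truncation. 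In effect, the paper re-runs the proof of Proposition~\ref{prop:dual} while carefully tracking which estimates survive when $\psi$ loses compact support. You instead keep Proposition~\ref{prop:dual} as a black box, approximate $\psi$ by the compactly supported $\chi_m\psi$, and pass to the limit $m\to\infty$ on both sides. This is a legitimate alternative; the two nontrivial checks you identify — pointwise convergence $\nabla^s(\chi_m\psi)\to v$ on bounded sets, and a uniform bound on $\nabla^s(\chi_m\psi)$ over $\supp\Psi$ for $m$ large — are exactly what is needed, and both do hold via the same near/far splitting as in Step~1 (once $m>\rho+1$, $\chi_m\equiv1$ on $B_1(x)$ for $x\in B_\rho$, so the near part is bounded by $\sup_{B_{\rho+1}}|\nabla\psi|$ independently of $m$). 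Your approach buys modularity, the paper's buys self-containment and avoids having to verify the Leibniz-type bookkeeping; you were right to note that invoking the Leibniz formula~\eqref{eq:leibniz} is circular here (it requires both factors in $C^1_c$) and to instead argue directly with the pointwise limit.
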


\begin{proof}
We fix $s\in (0,1)$ and $\psi\in L^\infty(\R^n)\cap C^1(\R^n)$, and we consider the function $v$ defined in~\eqref{eq:v}
Notice that $v(x)$ is well-defined for all $x\in\R^n$ and $v\in L^1_{\rm loc}(\R^n;\R^n)$. Indeed, for all $R>0$ and $x\in B_R$ we have
\begin{align*}
|v(x)|&\le \mu_s\int_{B_R(x)}\frac{|\psi(x)-\psi(y)|}{|x-y|^{n+s}}\,\de y+\int_{\R^n\setminus B_R(x)}\frac{|\psi(x)-\psi(y)|}{|x-y|^{n+s}}\,\de y\\
&\le \frac{\omega_nR^{1-s}\mu_s}{1-s}\|\nabla \psi\|_{L^\infty (B_{2R})}+\frac{2\omega_n\mu_s}{sR^s}\|\psi\|_{L^\infty(\R^n)}.
\end{align*}

We claim that for all functions $\Psi\in C^1_c(\R^n;\R^n)$ we have
\[
\int_{\R^n}v(y)\cdot\Psi(y)\,\de y=-\int_{\R^n}\psi(y)\div^s\Psi(y)\,\de y.
\]
We follow the lines of the proof of~\cite[Lemma~2.5]{CS19}. For all $\Psi\in C^1_c(\R^n;\R^n)$ we have
\begin{align*}
\int_{\R^n}v(x)\cdot \Psi(x)\,\de x&=\mu_s\int_{\R^n}\int_{\R^n}\frac{(\psi(x)-\psi(y))(x-y)}{|x-y|^{n+s+1}}\,\de y\cdot\Psi(x)\,\de x\\
% &=\mu_s\int_{\R^n}\lim_{\varepsilon\to 0^+}\int_{\R^n\setminus B_\varepsilon(x)}\frac{(\psi(x)-\psi(y))(x-y)}{|x-y|^{n+s+1}}\,\de y\cdot\Psi(x)\,\de x\\
&=\mu_s\lim_{\varepsilon\to 0^+}\int_{\R^n}\int_{\R^n\setminus B_\varepsilon(x)}\frac{(\psi(x)-\psi(y))(x-y)}{|x-y|^{n+s+1}}\cdot\Psi(x)\,\de y\,\de x\\
% &=-\mu_s\lim_{\varepsilon\to 0^+}\int_{\R^n}\int_{\R^n\setminus B_\varepsilon(x)}\frac{\psi(y)(x-y)\cdot\Psi(x)}{|x-y|^{n+s+1}}\,\de y\,\de x\\
% &=-\mu_s\lim_{\varepsilon\to 0^+}\int_{\R^n}\int_{\R^n\setminus B_\varepsilon(y)}\frac{\psi(y)(x-y)\cdot\Psi(x)}{|x-y|^{n+s+1}}\,\de x\,\de y\\
&=-\mu_s\lim_{\varepsilon\to 0^+}\int_{\R^n}\int_{\R^n\setminus B_\varepsilon(y)}\frac{\psi(y)(x-y)\cdot(\Psi(x)-\Psi(y))}{|x-y|^{n+s+1}}\,\de x\,\de y\\
% &=-\mu_s\int_{\R^n}\lim_{\varepsilon\to 0^+}\int_{\R^n\setminus B_\varepsilon(y)}\frac{\psi(y)(x-y)\cdot(\Psi(x)-\Psi(y))}{|x-y|^{n+s+1}}\,\de x\,\de y\\
&=-\mu_s\int_{\R^n}\int_{\R^n}\frac{\psi(y)(x-y)\cdot(\Psi(x)-\Psi(y))}{|x-y|^{n+s+1}}\,\de x\,\de y=-\int_{\R^n}\psi(y)\div^s\Psi(y)\,\de y.
\end{align*}
Thus, $v\in L^1_{\rm loc}(\R^n;\R^n)$ is the weak $s$-fractional gradient of $\psi$ in $\R^n$. 
\end{proof}

In the following result, we extend the fractional Leibniz formula~\eqref{eq:leibniz} to all functions $\psi, \phi\in L^\infty(\R^n) \cap C^1(\R^n)$.

\begin{lemma}\label{lem:leibniz2}
Let $s\in (0,1)$. For all functions $\psi,\phi\in L^\infty(\R^n)\cap C^1(\R^n)$ the nonlocal operator $\nabla^s_{\rm NL}(\psi,\phi)\colon\R^n\to\R^n$ introduced in~\eqref{eq:nablaNL} is well-defined and
\begin{equation}\label{eq:leibniz3}
\nabla^s(\psi\phi)(x)=\nabla^s\psi(x) \phi(x)+\psi(x)\nabla^s\phi(x)+\nabla^s_{\rm NL}(\psi,\phi)(x)\quad\text{for all $x\in\R^n$}.
\end{equation}
\end{lemma}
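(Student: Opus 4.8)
The plan is to imitate the proof of Proposition~\ref{prop:nonlocal-leibniz}, using the observation—already exploited in Lemma~\ref{lem:weak-grad}—that the singular integrals~\eqref{eq:nablas} and~\eqref{eq:nablaNL} make sense for $L^\infty\cap C^1$ functions without any support assumption, and then passing a purely algebraic identity through these (absolutely convergent) integrals.

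\emph{Step 1: well-definedness of $\nabla^s_{\rm NL}(\psi,\phi)$.} Fix $x\in\R^n$ and $R>0$. On $B_R(x)$, since $\psi,\phi\in C^1(\R^n)$, the fundamental theorem of calculus along segments gives $|\psi(y)-\psi(x)|\le\|\nabla\psi\|_{L^\infty(B_R(x))}|y-x|$ and $|\phi(y)-\phi(x)|\le\|\nabla\phi\|_{L^\infty(B_R(x))}|y-x|$, so the integrand in~\eqref{eq:nablaNL} is bounded in modulus by $\|\nabla\psi\|_{L^\infty(B_R(x))}\|\nabla\phi\|_{L^\infty(B_R(x))}|y-x|^{2-n-s}$, which is integrable over $B_R(x)$ because $\int_0^R\rho^{1-s}\,\de\rho<\infty$ for $s\in(0,1)$. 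On $\R^n\setminus B_R(x)$, the crude bound $|\psi(y)-\psi(x)|\le2\|\psi\|_{L^\infty(\R^n)}$, $|\phi(y)-\phi(x)|\le2\|\phi\|_{L^\infty(\R^n)}$ shows the integrand is bounded by $4\|\psi\|_{L^\infty(\R^n)}\|\phi\|_{L^\infty(\R^n)}|y-x|^{-n-s}$, integrable at infinity since $\int_R^\infty\rho^{-1-s}\,\de\rho<\infty$. Hence the integral defining $\nabla^s_{\rm NL}(\psi,\phi)(x)$ converges absolutely for every $x\in\R^n$ (an $L^p$-bound analogous to Proposition~\ref{prop:gl-est2}, obtained by optimizing in $R$, is not needed here).

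\emph{Step 2: the Leibniz identity.} For all $x,y\in\R^n$ one has the elementary decomposition
\[
\psi(y)\phi(y)-\psi(x)\phi(x)=\phi(x)\big(\psi(y)-\psi(x)\big)+\psi(x)\big(\phi(y)-\phi(x)\big)+\big(\psi(y)-\psi(x)\big)\big(\phi(y)-\phi(x)\big).
\]
Multiplying by $\mu_s\,(y-x)|y-x|^{-n-s-1}$ and integrating in $y$ over $\R^n$, the left-hand side equals $\nabla^s(\psi\phi)(x)$: indeed $\psi\phi\in L^\infty(\R^n)\cap C^1(\R^n)$, so by Lemma~\ref{lem:weak-grad} its weak $s$-fractional gradient is represented by the absolutely convergent integral~\eqref{eq:v} with $\psi$ replaced by $\psi\phi$, and likewise $\nabla^s\psi(x)$ and $\nabla^s\phi(x)$ are given by that same formula applied to $\psi$ and $\phi$. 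Each of the three integrals arising on the right-hand side converges absolutely—the first two by the estimate in the proof of Lemma~\ref{lem:weak-grad}, the third by Step~1—so the integral of the sum splits into the sum of the integrals, yielding $\phi(x)\,\nabla^s\psi(x)+\psi(x)\,\nabla^s\phi(x)+\nabla^s_{\rm NL}(\psi,\phi)(x)$. This is exactly~\eqref{eq:leibniz3}.

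Since no cancellation or limiting argument is involved, there is no genuine obstacle: the only point demanding care is the absolute convergence near the diagonal of the three scalar integrals appearing in Step~2, which is precisely what licenses the termwise splitting and is the sole difference from the compactly supported case treated in Proposition~\ref{prop:nonlocal-leibniz}.
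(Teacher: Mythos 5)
Your proposal is correct and takes essentially the same approach as the paper: the paper's proof is a one-liner ("well-definedness by arguing as in the proof of Lemma~\ref{lem:weak-grad}, then the identity follows from the definitions"), and your Step~1 is precisely the $B_R(x)$ vs.\ $\R^n\setminus B_R(x)$ splitting used in Lemma~\ref{lem:weak-grad}, while your Step~2 spells out the algebraic decomposition and the termwise splitting of absolutely convergent integrals that the paper leaves implicit.
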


\begin{proof}
We first observe that $\nabla^s_{\rm NL}(\psi, \phi)$ is well-defined for all $x\in\R^n$, by arguing as in the proof of Lemma~\ref{lem:weak-grad}. Then, the identity~\eqref{eq:leibniz3} follows from the definitions of $\nabla^s$ and $\nabla^s_{\rm NL}$.
\end{proof}

We are now in a position to prove Proposition~\ref{prop:app}.

\begin{proof}[Proof of Proposition~\ref{prop:app}]

The proof is divided in two steps. First, we approximate a function $u\in L^\infty(\R^n)$ satisfying
\[
|D^s u|(A) <\infty \quad \text{for all open sets $A \subset\subset \Omega$},
\]
by a sequence $(u_\varepsilon)_\varepsilon \subset L^\infty(\R^n)\cap C^\infty(\R^n)$ as $\varepsilon\to 0$. In the second step, we assume that $u\in L^\infty(\R^n)\cap C^\infty(\R^n)$ and approximate it by a sequence $(u_R)_R \subset C_c^\infty(\R^n)$ as $R\to\infty$. The conclusion then follows by a diagonal argument.

{\bf Step 1}. We fix a function $\rho\in C_c^\infty(B_1)$ which satisfies
\[
\rho(x)\ge 0\quad\text{for all $x\in\R^n$},\qquad \rho(x)=\rho(-x)\quad\text{for all $x\in\R^n$},\qquad\int_{B_1}\rho(y)\,\de y=1,
\]
and we set
\[
\rho_\varepsilon(x)\coloneqq \frac{1}{\varepsilon^n}\rho\left(\frac{x}{\varepsilon}\right)\quad\text{for $x\in\R^n$ and $\varepsilon>0$}.
\]

We define
\[
u_\varepsilon(x)\coloneqq (\rho_\varepsilon*u)(x)=\int_{B_\varepsilon(x)}u(y)\rho_\varepsilon(x-y)\,\de y\quad\text{for all $x\in\R^n$ and $\varepsilon>0$}.
\]
Clearly, $(u_\varepsilon)_\varepsilon\subset L^\infty(\R^n)\cap C^\infty(\R^n)$ and
\begin{align*}
&\|u_\varepsilon\|_{L^\infty(\R^n)}\le \|u\|_{L^\infty(\R^n)}& &\text{for all $\varepsilon>0$},\\
&u_\varepsilon\to u& &\text{strongly in $L^1_{\rm loc}(\R^n)$ for all $p\in [1,\infty)$ as $\varepsilon\to 0^+$}.
\end{align*}
In particular, by Lemma~\ref{lem:weak-grad} and Proposition~\ref{prop:Ds-nablas} there exists $D^s u_\varepsilon\in\mathcal M(\R^n;\R^n)$ and
\[
D^s u_\varepsilon=\nabla^s u_\varepsilon\,\mathcal L^n\quad\text{in $\mathcal M(\R^n;\R^n)$},
\]
where $\nabla^s u_\varepsilon$ is given by formula~\eqref{eq:nablas}. We fix a function $\Psi\in C^\infty_c(\Omega;\R^n)$. By Proposition~\ref{prop:Riesz}, for all $\varepsilon\in (0,\dist(\supp\Psi,\partial\Omega))$ we have
\begin{align*}
\int_{\R^n}\Psi(y)\cdot\de D^s u_\varepsilon(y)&=-\int_{\R^n}u_\varepsilon(y)\div^s\Psi(y)\,\de y\\
&=-\int_{\R^n}(\rho_\varepsilon*u)(y)\div^s\Psi(y)\,\de y\\
&=-\int_{\R^n}u(y)\div^s(\rho_\varepsilon*\Psi)(y)\,\de y\\
&=\int_{\R^n}(\rho_\varepsilon*\Psi)(y)\cdot \de D^s u(y)=\int_\Omega\Psi(y)\cdot \de (\rho_\varepsilon* D^s u)(y).
\end{align*}
This implies that 
\[
D^su_\varepsilon\rightharpoonup D^su\quad\text{weakly* in $\mathcal M(\Omega;\R^n)$ as $\varepsilon\to 0^+$}.
\]

{\bf Step 2.} Assume now that $u\in L^\infty(\R^n)\cap C^\infty(\R^n)$. We fix a family of functions $(\eta_R)_R\subset C_c^\infty(\R^n)$ which satisfies for all $R>0$ 
\[
0\le \eta_R\le 1\quad\text{in $\R^n$},\qquad \eta_R=1\quad\text{in $B_R$},\qquad \supp(\eta_R)\subset B_{R+1},\qquad |\nabla \eta_R|\le 2\quad\text{in $\R^n$}.
\]

We define
\[
u_R(x)\coloneqq u(x)\eta_R(x)\quad\text{for all $x\in\R^n$ and $R>0$}.
\]
We have that $u_R\in C^\infty_c(\R^n)$ and
\[
u_R\to u\quad\text{strongly in $L^1_{\rm loc}(\R^n)$ for all $p\in [1,\infty)$ as $R\to\infty$}.
\]
Moreover, by Lemma~\ref{lem:leibniz2} we have
\begin{align*}
\nabla^s u_R(x)=u(x)\nabla^s \eta_R(x)+\nabla^s u(x)\eta_R(x)+\nabla^s_{\rm NL}(u,\eta_R)(x)\quad\text{for all $x\in\R^n$ and $R>0$}.
\end{align*}

We fix $r>0$. For all $x\in B_r$ and $R>2r$ we get
\begin{align*}
&|u(x)\nabla^s\eta_R(x)|\le\mu_s\|u\|_{L^\infty(\R^n)}\int_{\R^n\setminus B_r(x)}\frac{|\eta_R(x)-\eta_R(y)|}{|x-y|^{n+s}}\,\de y\le \frac{\omega_n\mu_s}{sr^s}\|u\|_{L^\infty(\R^n)},\\
&|\nabla^s u(x)\eta_R(x)|\le |\nabla^s u(x)|\\
&|\nabla^s_{\rm NL}(u,\eta_R)(x)|\le 2\mu_s\|u\|_{L^\infty(\R^n)}\int_{\R^n\setminus B_r(x)}\frac{|\eta_R(x)-\eta_R(y)|}{|x-y|^{n+s}}\,\de y\le \frac{2\omega_n\mu_s}{sr^s}\|u\|_{L^\infty(\R^n)}.
\end{align*}
Thus,
\[
|D^s u_R|(B_r)=\int_{B_r}|\nabla^s u_R(y)|\,\de y\le \frac{3\omega_n^2r^{n-s}\mu_s}{n s}\|u\|_{L^\infty(\R^n)}+|D^s u|(B_r)\quad\text{for all $R>2r$}.
\]
Hence, there exist a not relabeled subsequence and a Radon measure $\mu\in \mathcal M(\R^n;\R^n)$ such that
\[
D^su_R\rightharpoonup\mu\quad\text{weakly* in $\mathcal M(\R^n;\R^n)$ as $R\to\infty$}.
\]
On the other hand, for all $\Psi\in C^\infty_c(\R^n;\R^n)$ we have
\begin{align*}
\int_{\R^n}\Psi(y)\cdot\de \mu(y)&=\lim_{R\to\infty}\int_{\R^n}\Psi(y)\cdot\de D^su_R(y)\\
&=-\lim_{R\to\infty}\int_{\R^n}u(y)\eta_R(y)\div^s\Psi(y)\,\de y=-\int_{\R^n}u(y)\div^s\Psi(y)\,\de y.
\end{align*}
Thus, $\mu=D^s u$, which implies $D^su_R\rightharpoonup D^su$ weakly* in $\mathcal M(\R^n;\R^n)$ as $R\to\infty$ by the Urysohn property.
\end{proof}

We conclude the appendix with the proof of Lemma~\ref{lem:BVO-comp}.

\begin{proof}[Proof of Lemma~\ref{lem:BVO-comp}]
(1) Let $(w_k)_k \subset BV(\Omega)$ be a sequence such that
\[
\sup_{k\in\N}|Dw_k|(\Omega)<\infty.
\]
By Poincare's inequality, there exists a constant $C>0$, independent of $k\in\N$, and a sequence $(a_k)_k \subset\R$ such that
\[
\|w_k-a_k\|_{L^1(\Omega)} \le C|Dw_k|(\Omega) \quad \text{for all $k\in\N$}.
\]
Hence, the sequence of functions
\[
v_k \coloneqq w_k-a_k\in BV(\Omega), \quad k\in\N,
\]
is uniformly bounded in $BV(\Omega)$. Therefore, there exist a not relabeled subsequence and a function $w\in BV(\Omega)$ such that~\eqref{eq:BVcon1}--\eqref{eq:BVcon2} hold.

(2) Let $(\Omega_j)_j$ be a sequence of open bounded sets with smooth boundaries such that
\[
\Omega_j\subset\subset\Omega_{j+1}\quad\text{for all $j\in\N$},\qquad \bigcup_{j\in\N}\Omega_j=\Omega.
\]
We fix $j\in\N$. By Poincare's inequality, there exists a constant $C_j>0$, independent of $k\in\N$, and a sequence $(a_{k,j})_k \subset\R$ such that
\begin{equation*}
\|w_k-a_{k,j}\|_{L^1(\Omega_j)}\le C_j|Dw_k|(\Omega_j)\quad\text{for all $k\in\N$}.
\end{equation*}
Let us set $a_k\coloneqq a_{k,1}$. Then, for all $j,k\in\N$ we have
\begin{align*}
\|w_k-a_k\|_{L^1(\Omega_j)}&\le \|w_k-a_{k,j}\|_{L^1(\Omega_j)}+\|a_{k,j}-a_k\|_{L^1(\Omega_j)}\\
&\le C_j|Dw_k|(\Omega_j)+\frac{\mathcal L^n(\Omega_j)}{\mathcal L^n(\Omega_1)}\|a_{k,j}-a_k\|_{L^1(\Omega_1)}\\
&\le \left(C_j+\frac{\mathcal L^n(\Omega_j)}{\mathcal L^n(\Omega_1)}C_1+\frac{\mathcal L^n(\Omega_j)}{\mathcal L^n(\Omega_1)}C_j\right)|Dw_k|(\Omega_j).
\end{align*} 
In particular, the sequence of functions
\[
v_k\coloneqq w_k-a_k\in BV_{\rm loc}(\Omega),\quad k\in\N,
\]
is uniformly bounded in $BV(\Omega_j)$ for all $j\in\N$. Therefore, for all $j\in\N$ there exist a subsequence $(k_{h,j})_h\subset\N$ and a function $v_j\in BV(\Omega_j)$ such that 
\begin{align*}
&v_{k_{h,j}}\to v_j & &\text{strongly in $L^1(\Omega_j)$ as $h\to\infty$},\\
&Dw_{k_{h,j}}=Dv_{k_{h,j}}\rightharpoonup Dv_j & &\text{weakly* in $\mathcal M_b(\Omega_j;\R^n)$ as $h\to\infty$}.
\end{align*} 
By a diagonal argument, we can find a subsequence $(k_h)_h\subset\N$ and a function $w\in BV_{\rm loc}(\Omega)$ such that~\eqref{eq:BVcon3} holds. 

% By induction, we can find $((k_{h,j})_h)_j$ such that 
% \[
% (k_{h,j'})_h\subset (k_{h,j})_h\quad\text{for all $j'\ge j$}. 
% \] 
% This implies that
% \[
% v_{j'}=v_j\quad\text{a.e.\ in $\Omega_j$ for all $j'\ge j$}.
% \]
% Hence, defining $w\in BV_{\rm loc}(\Omega)$ as
% \[
% w(x)\coloneqq v_j(x)\quad\text{for $x\in \Omega_j$},
% \]
% by diagonal argument, we can find a subsequence $(k_h)_h\subset\N$ such that~\eqref{eq:BVcon3} holds.
\end{proof}

%--------------------------
% Bibliography
%--------------------------

\end{document}